\numberwithin{equation}{section}
\newtheorem{theorem}[equation]{Theorem}
\newtheorem{proposition}[equation]{Proposition}
\newtheorem{lemma}[equation]{Lemma}
\newtheorem{corollary}[equation]{Corollary}
\theoremstyle{definition}
\newtheorem{rmk}[equation]{Remark}
\newenvironment{remark}[1][]{\begin{rmk}[#1] \pushQED{\qed}}{\popQED \end{rmk}}
\newtheorem{rmks}[equation]{Remarks}
\newtheorem{eg}[equation]{Example}
\newenvironment{example}[1][]{\begin{eg}[#1] \pushQED{\qed}}{\popQED \end{eg}}
\newtheorem{defn}[equation]{Definition}
\newenvironment{definition}[1][]{\begin{defn}[#1]\pushQED{\qed}}{\popQED \end{defn}}
\newtheorem{ques}[equation]{Question}
\newtheorem{notn}[equation]{Notation}
\newcommand{\fG}{\mathfrak{G}}
\newcommand{\fS}{\mathfrak{S}}
\newcommand{\ba}{\mathbf{a}}
\newcommand{\bb}{\mathbf{b}}
\newcommand{\fc}{\mathfrak{c}}
\newcommand{\bd}{\mathbf{d}}
\newcommand{\bs}{\mathbf{s}}
\newcommand{\bt}{\mathbf{t}}
\newcommand{\bv}{\mathbf{v}}
\newcommand{\bw}{\mathbf{w}}
\newcommand{\fw}{\mathfrak{w}}
\renewcommand{\phi}{\varphi}
\renewcommand{\tilde}[1]{\widetilde{#1}}
\newcommand{\comment}[1]{}
\def\Ddots{\mathinner{\mkern1mu\raise\p@
\vbox{\kern7\p@\hbox{.}}\mkern2mu
\raise4\p@\hbox{.}\mkern2mu\raise7\p@\hbox{.}\mkern1mu}}
\DeclareMathOperator{\codim}{codim}
\DeclareMathOperator{\Mat}{Mat}
\newcommand{\GL}{\mathbf{GL}}
\newcommand{\latticepath}[4]{ % #1 = box diameter
                              % #2 = start point
                              % #3 = thickness
                              % #4 = list of x/y/color
  \coordinate (L) at #2;
  \foreach \x/\y/\a in {#4} {
    \coordinate (L1) at ($ (L) + ( #1 * \x , #1 * \y ) $);
    \draw[color=\a, #3] (L) -- (L1);
    \coordinate (L) at (L1);
  }
}
\newcommand{\pipedream}[5]{ % #1 = box diameter
                            % #2 = top-left corner point
                            % #3 = list of top labels
                            % #4 = list of left labels
                            % #5 = list of x/y/crossorturn/left color/bottom color
  \coordinate (P) at ($ #2 - (#1, 0 ) $);

  \coordinate (Q) at ($ (P) + ( 1.5 * #1, 0.5 * #1 ) $);
  \foreach \object in {#3} {
    \node at (Q) {\object};
    \coordinate (Q) at ($ (Q) + ( #1, 0 ) $);
  }

  \coordinate (Q) at ($ (P) + ( 0.5*#1, -0.5*#1 ) $);
  \foreach \object in {#4} {
    \node at (Q) {\object};
    \coordinate (Q) at ($ (Q) + ( 0, -#1 ) $);
  }

  \foreach \x/\y/\z/\a/\b in {#5} {
    \coordinate (P1) at ($ (P) + ( #1 * \y , -#1 * \x ) + ( 0      , #1 / 2 ) $);
    \coordinate (P2) at ($ (P) + ( #1 * \y , -#1 * \x ) + ( #1     , #1 / 2 ) $);
    \coordinate (P3) at ($ (P) + ( #1 * \y , -#1 * \x ) + ( #1 / 2 , #1     ) $);
    \coordinate (P4) at ($ (P) + ( #1 * \y , -#1 * \x ) + ( #1 / 2 , 0      ) $);
    \coordinate (P5) at ($ (P) + ( #1 * \y , -#1 * \x ) + ( #1 / 2 , #1 / 2 ) $);
    
    \ifnum \z = 0
      \draw[rounded corners=4, color=\a, thick] (P1) -- (P5) -- (P3);
      \draw[rounded corners=4, color=\b, thick] (P4) -- (P5) -- (P2);
    \else
      \draw[rounded corners=0.2, color=\a, thick] (P3) -- (P5) -- (P4);
      \draw[rounded corners=0.2, color=\b, thick] (P1) -- (P5) -- (P2);
    \fi
  }
}
\def\textcross{
  \begin{minipage}{13pt}
    \begin{tikzpicture}[scale=1,>=latex]
      \pipedream{0.4}{(0,0)}{$$}{$$}{0/0/1/black/black}
    \end{tikzpicture}
  \end{minipage}
}
\def\textturn{
  \begin{minipage}{13pt}
    \begin{tikzpicture}[scale=1]
      \pipedream{0.4}{(0,0)}{$$}{$$}{0/0/0/black/black}
    \end{tikzpicture}
  \end{minipage}
}
\renewcommand{\GL}{{GL}}
\DeclareMathOperator{\Laces}{Laces}
\DeclareMathOperator{\Pipes}{Pipes}
\DeclareMathOperator{\PipeNet}{PipeNet}
\DeclareMathOperator{\RPipeNet}{RPipeNet}
\DeclareMathOperator{\RPipes}{RPipes}
\newcommand{\rep}{\texttt{rep}}
\def\bbordermatrix#1{\begingroup \m@th
  \@tempdima 4.75\p@
  \setbox\z@\vbox{%
    \def\cr{\crcr\noalign{\kern2\p@\global\let\cr\endline}}%
    \ialign{$##$\hfil\kern2\p@\kern\@tempdima&\thinspace\hfil$##$\hfil
      &&\quad\hfil$##$\hfil\crcr
      \omit\strut\hfil\crcr\noalign{\kern-\baselineskip}%
      #1\crcr\omit\strut\cr}}%
  \setbox\tw@\vbox{\unvcopy\z@\global\setbox\@ne\lastbox}%
  \setbox\tw@\hbox{\unhbox\@ne\unskip\global\setbox\@ne\lastbox}%
  \setbox\tw@\hbox{$\kern\wd\@ne\kern-\@tempdima\left[\kern-\wd\@ne
    \global\setbox\@ne\vbox{\box\@ne\kern2\p@}%
    \vcenter{\kern-\ht\@ne\unvbox\z@\kern-\baselineskip}\,\right]$}%
  \null\;\vbox{\kern\ht\@ne\box\tw@}\endgroup}
\DeclareMathOperator{\rot}{rot}
\title{Combinatorial proofs of the \\type $A$ quiver component formulas}
\author{Aidan Lindberg}
\author{Jenna Rajchgot}
\address{University of Toronto, Department of Mathematics, Toronto, ON, Canada}
\email[Aidan Lindberg]{aidan.lindberg@mail.utoronto.ca}
\address{McMaster University, Department of Mathematics and Statistics, Hamilton, ON, Canada}
\email[Jenna Rajchgot]{rajchgoj@mcmaster.ca}
\begin{document}
\maketitle

\begin{abstract} 

 The $K$-theoretic quiver component formula expresses the $K$-polynomial of a type $A$ quiver locus as an alternating sum of products of double Grothendieck polynomials. This formula was conjectured by A. Buch and R. Rim\'anyi and later proved by R. Kinser, A. Knutson, and the second author. 
 We provide a new proof of this formula which replaces Gr\"obner degenerations by combinatorics. 
 Along the way, we obtain a new proof of A. Buch and R. Rim\'anyi's cohomological quiver component formula. Again, our proof replaces geometric techniques by combinatorics. 
\end{abstract}

\setcounter{tocdepth}{1}
\tableofcontents

\section{Introduction}

Given a quiver $Q$ with vertex set $Q_0$, arrow set $Q_1$, and dimension vector $\mathbf{d}:Q_0 \to \mathbb{Z}_{\geq 0}$, one forms the representation space 
\begin{align*}
\rep_Q(\mathbf{d})=\prod_{a \in Q_1} \Mat(\mathbf{d}(ta), \mathbf{d}(ha))\,,
\end{align*}
where $ta\in Q_0$ and $ha\in Q_0$ are the tail and head vertices of the arrow $a\in Q_1$, and $\text{Mat}(m,n)$ is the space of $m\times n$ matrices with entries in a field $\mathbb{K}$. The product of general linear groups $\GL(\mathbf{d})=\prod_{z \in Q_0} \GL_{\mathbf{d}(z)}(\mathbb{K})$ acts on $\rep_Q(\bd)$ on the right by 
\[(V_a)_{a \in Q_1}\bullet (g_z)_{z \in Q_0} = (g_{ta}^{-1}V_a g_{ha})_{a \in Q_1},\] where  $(V_a)_{a \in Q_1} \in \rep_Q(\mathbf{d})$, and $(g_z)_{z \in Q_0} \in \GL(\mathbf{d})$. The closure of a $\GL(\mathbf{d})$ orbit $\Omega^\circ$ in $\rep_Q(\mathbf{d})$ is called a \textit{quiver locus}, and will be denoted by $\Omega$. 
 
 The restriction of the $GL(\bd)$ action to the torus $T\leq GL(\bd)$ of tuples of diagonal matrices induces a $\mathbb{Z}^d$-grading, where $d = \sum_{z\in Q_0} \bd(z)$, on the coordinate ring of each quiver locus. In this article, we study known formulas for multidegrees and $K$-polynomials, with respect to this grading, of quiver loci in representation spaces of  type $A$ quivers. We focus primarily on the \emph{component formulas}: the cohomological component formula (see Theorem \ref{thm:3.9}) expresses the multidegree of a type $A$ quiver locus as a positive sum of products of double Schubert polynomials, and the $K$-theoretic component formula (see Theorem \ref{thm:4.16}) expresses the $K$-polynomial as an alternating sum of products of double Grothendieck polynomials. In this paper we give new proofs of these formulas which are combinatorial in nature. To the best of our knowledge, all prior proofs of these formulas were geometric; our proofs replace these geometric arguments with the combinatorics of \emph{lacing diagrams} and \emph{pipe dreams}. 
 
  Our first main result, Theorem \ref{thm:3.9}, is a new proof of the cohomological component formula for arbitrarily oriented type $A$ quiver loci. Component formulas for \emph{equioriented} type $A$ quivers (i.e., type $A$ quivers where all arrows point in the same direction) were proven in \cite{KMS, yongComponent, Miller, Buch, BFR}. In the case of arbitrary orientation, the cohomological component formula was first proven by Buch and Rim\'anyi in \cite{BuchRim} using the geometry and representation theory of the quiver locus. 
  As a corollary, Buch and Rim\'anyi provide a formula for the codimension of a type $A$ quiver locus $\Omega$, in the representation space $\rep_Q(\bd)$, in terms of the number of crossings in the extension of an associated \textit{minimal lacing diagram} \cite[Corollary 2]{BuchRim}. 
  In the present paper, we give a new combinatorial proof of this codimension formula in the bipartite type $A$ quiver setting (see Corollary \ref{cor:codim}) which makes use of the characterization of minimal lacing diagrams given in \cite{BuchRim}. We then use this formula in our proof of the cohomological quiver component formula. We note that the  codimension formula is also used in an essential way in \cite{KKR} to prove the $K$-theoretic component formula.

 Our second main result, Theorem \ref{thm:4.16}, is a new proof of the $K$-theoretic component formula for type $A$ quiver loci. This formula was conjectured by Buch and Rim\'anyi in \cite{BuchRim} and later proved by Kinser, Knutson, and the second author in \cite{KKR}. The proof of the $K$-theoretic component formula in \cite{KKR} relies on a Gr\"obner degeneration, as well as a M\"obius function computation (see \cite{KKR} or \cite{KinserICRA} for details). 
 We replace these Gr\"obner degeneration and M\"obius function computations by combinatorics of lacing diagrams and pipe dreams. Our proof is essentially a \emph{bipartite} (i.e. source sink) orientation version of the proof of the equioriented $K$-theoretic quiver component formula given by Buch in \cite[\S6]{MR2114821}; indeed, we adapt various results for equioriented quivers from there, as well as from \cite{MR2137947, BFR, yongComponent}, to bipartite orientation. We note that a couple of these adapted results were already used in \cite[\S4.6]{KKR}, with details left to the reader. For further history and motivation on these and related formulas, see \cite{KinserICRA} or \cite{KKR} and references therein. 
 
 In the course of proving the component formulas, we also give new combinatorial proofs of the \emph{pipe formulas} from \cite{KKR} (see Section \ref{sect:codim}). It is worth noting that while our proofs of the cohomological and $K$-theoretic component formulas both rely on the combinatorics of lacing diagrams and pipe dreams, the combinatorial results which lead to these formulas are rather different in each case. Indeed, the cohomological formula follows from direct analysis of reduced pipe dreams for the Zelevinsky permutation $v(\Omega)$ of the quiver locus $\Omega$, while the $K$-theoretic formula is proven by studying sequences of permutations obtained from (possibly non-reduced) pipe dreams for $v(\Omega)$.
   
 Throughout the paper we work only with bipartite type $A$ quivers. This is because the arbitrarily oriented type $A$ quiver versions of the formulas we consider follow from the bipartite versions (see \cite[\S5]{KKR} for details). We assume the leftmost and rightmost vertices of our bipartite type $A$ quivers are sources. We label source vertices by $(y_i)_{i=0}^n$, sink vertices by $(x_i)_{i=1}^n$, right-pointing arrows by $(\beta_i)_{i=1}^n$, and left-pointing arrows by $(\alpha_i)_{i=1}^n$. Subscripts increase when reading from right to left.
For example,
\begin{equation}\label{eq:quiverEg}
\begin{tikzpicture}
\node (y2) at (0,1) {$y_2$};
\node (x2) at (1,0) {$x_2$};
\node (y1) at (2,1) {$y_1$};
\node (x1) at (3,0) {$x_1$};
\node (y0) at (4,1) {$y_0$};

\node (b2) at (.3, .3) {$\beta_2$};
\node (a2) at (1.7, .3) {$\alpha_2$};
\node (b1) at (2.3, .3) {$\beta_1$};
\node (a1) at (3.7, .3) {$\alpha_1$};

\draw[->] (y2) -- (x2);
\draw[->] (y1) -- (x2);
\draw[->] (y1) -- (x1);
\draw[->] (y0) -- (x1);
\end{tikzpicture}
\end{equation}
is a bipartite type $A_5$ quiver with our labeling. 

To each sink vertex $x_k$, we associate a list of variables $\mathbf{s}^k=s_1^k, \dots, s_{\mathbf{d}(x_k)}^k$, and to each source vertex $y_k$, we associate a list $\mathbf{t}^k=t_1^k, \dots, t_{\mathbf{d}(y_k)}^k$. 
We concatenate these lists to $\mathbf{s}=\mathbf{s}^n, \dots, \mathbf{s}^1$ and $\mathbf{t}=\mathbf{t}^0,\dots, \mathbf{t}^n$. By \cite[Theorem 3.2, Theorem 5.9]{KKR}, the $K$-polynomial of a bipartite type $A$ quiver locus $\Omega\subseteq \rep_Q(\bd)$ is given by the quotient of specialized double Grothendieck polynomials on the left below and the multidegree is given by the quotient of specialized double Schubert polynomials on the right below: 
\begin{equation}\label{eq:ratioformulas}
 \mathcal{KQ}_{\Omega}(\mathbf{t}/\mathbf{s})=\frac{\mathfrak{G}_{v(\Omega)}(\mathbf{t},\mathbf{s};\mathbf{s},\mathbf{t})}{\mathfrak{G}_{v_*}(\mathbf{t},\mathbf{s};\mathbf{s},\mathbf{t})}, \qquad 
\mathcal{Q}_{\Omega}(\mathbf{t}-\mathbf{s})=\frac{\mathfrak{S}_{v(\Omega)}(\mathbf{t},\mathbf{s};\mathbf{s},\mathbf{t})}{\mathfrak{S}_{v_*}(\mathbf{t},\mathbf{s};\mathbf{s},\mathbf{t})}\,.
\end{equation}
Here, $v(\Omega)$ and $v_*$ are particular permutations associated to $\Omega$ and $\rep_Q(\bd)$ respectively (see Section \ref{sec:2.3}). These \emph{bipartite ratio formulas} were proved in \cite{KKR}, and are bipartite versions of the equioriented ratio formulas from \cite{KMS}. They follow easily from results in the Schubert variety literature, together with the \emph{bipartite Zelevinsky map} which identifies a bipartite type-$A$ quiver locus with the intersection of a Schubert variety and an opposite Schubert cell (see \cite{KR}). \emph{We take the bipartite ratio formulas as our starting point in this paper;} we derive the bipartite pipe and component formulas from the bipartite ratio formulas using combinatorial arguments.

\subsection*{Acknowledgments} Some of the results in this paper were proven during an undergraduate summer research project at the University of Saskatchewan. Both authors were partially supported by NSERC grant 2017-05732. J.R. was also partially supported by NSERC grant 2023-04800. A.L. was partially supported by an NSERC CGS-D scholarship, as well as a University of Toronto FAST fellowship. We thank Ryan Kinser for helpful correspondence, especially for suggesting the idea for this project.

%~~~~~~~~~~~~~~~~~~~~~~~~~~~~~~~~~~~~~~~~~~~~~~~~~~~~~~~~~~~~

\section{Preliminaries}

In this section, we fix our conventions and briefly recall relevant combinatorial background. For more details on this background material, see \cite[\S2]{KKR}. 

\subsection{Permutations and partial permutations}\label{sec:Permutations}
 
We let $S_n$ denote the symmetric group on $n$ elements, and let $\tau_i\in S_n$ denote the simple transposition that interchanges $i$ and $i+1$. We write $\ell(v)$ for the length of a permutation $v\in S_n$. We represent $v\in S_n$ as an $n \times n$ permutation matrix which has a $1$ in entry $(i,j)$ if and only if $v(i)=j$, and zeros elsewhere.

A $k \times l$ matrix $w$ is a \emph{partial permutation matrix} if every entry of $w$ is either $0$ or $1$, and each row and each column has at most one $1$. Such a $w$ has a unique \textit{completion}, $c(w)$, defined to be the permutation matrix of smallest possible dimensions and minimal length such that $w$ lies in its northwest corner.

\begin{remark}\label{rem:completions}
If $w$ is a $k \times \ell$ partial permutation matrix, then we will often view its completion $c(w)$ as an element of $S_{k+\ell}$ as follows. If $c(w)$ is an $m \times m$ permutation matrix then for every $i$ such that $m < i \leq k+\ell$ we define $c(w)(i)=i$. 
\end{remark}

\subsection{Lacing diagrams}\label{sect:lacingDiagrams}

A \textit{lacing diagram} for a type $A$ quiver $Q$ with dimension vector $\mathbf{d}$ is a sequence of columns of dots, with arrows connecting dots in consecutive columns. The columns of dots are indexed by $Q_0$ in the same order that the elements of $Q_0$ appear in $Q$. There are $\mathbf{d}(z)$ dots in column $z\in Q_0$. If $a\in Q_1$ is a right-pointing (respectively left-pointing) arrow, align the columns of dots associated to $ta$ and $ha$ from their top dot (respectively bottom dot). There is at most one arrow entering or exiting the left (respectively right) of any dot. The direction of an arrow between columns $ta, ha$ in the lacing diagram agrees with the direction of $a\in Q_1$. The connected components of a lacing diagram are called its \textit{laces}. See Figure \ref{fig:laces}.

There is a one-to-one correspondence between lacing diagrams for $Q$ with dimension vector $\bd$ and representations $\bw = (w_a)_{a\in Q_1}$ in $\rep_Q(\bd)$, where $w_a$ is a $\bd(ta)\times \bd(ha)$ partial permutation matrix. Indeed, $w_a$ has a $1$ in position $(i,j)$ if and only if the associated lacing diagram has an arrow from the $i^{\rm{th}}$ dot from the top in column $ta$ to the $j^{\rm{th}}$ dot from the top in column $ha$. Because of this correspondence, we use the term lacing diagram to refer to both a sequence of partial permutations $\mathbf{w}\in \rep_Q(\bd)$ and its corresponding picture with dots and arrows.  

Given a lacing diagram $\bw = (w_a)_{a\in Q_1}$, we define its \emph{extended lacing diagram} to be the sequence of permutation matrices $\bv = (v_a)_{a\in Q_1}$, where $v_a = c(w_a)$ when $a$ is a right pointing arrow, and $v_a = \rot(c(\rot(w_a)))$ when $a$ is a left pointing arrow. Here, $\rot(w)$ maps the matrix $w$ to its 180 degree rotation. When we draw extended lacing diagrams as pictures with dots and arrows, we use red for those \emph{virtual} dots and arrows which are in the extended lacing diagram, but not the original lacing diagram.

\begin{example}
The sequence of partial permutations associated to the lacing diagram on the left of Figure \ref{fig:laces} is given by 

\begin{equation}
\bw = \left( \begin{bmatrix}
1 & 0 & 0 \\
0 & 1 & 0 \\
\end{bmatrix},
\begin{bmatrix}
1 & 0 & 0 \\ 
0 & 0 & 1 \\
0 & 1 & 0 \\
\end{bmatrix},
\begin{bmatrix}
1 & 0 \\
0 & 1 \\
0 & 0 \\
\end{bmatrix},
\begin{bmatrix}
1 & 0 
\end{bmatrix} \right).
\end{equation}
The associated extended lace diagram appears on right in Figure \ref{fig:laces} and corresponds to the sequence of permutations
\begin{align}
\left( \begin{bmatrix}
1 & 0 & 0 \\
0 & 1 & 0 \\
0 & 0 & 1 \\
\end{bmatrix},
\begin{bmatrix}
1 & 0 & 0 \\
0 & 0 & 1 \\
0 & 1 & 0 \\
\end{bmatrix},
\begin{bmatrix}
1 & 0 & 0 \\
0 & 1 & 0 \\
0 & 0 & 1 \\
\end{bmatrix},
\begin{bmatrix}
0 & 1 \\
1 & 0 \\
\end{bmatrix} \right)\,.
\end{align}
\end{example}

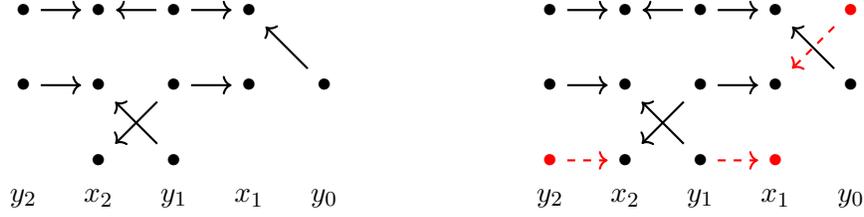
\begin{figure}
\begin{center}
\begin{tikzpicture}
\node (w11) at (0,2) {$\bullet$};
\node (w12) at (0,1) {$\bullet$};

\node (w21) at (1,2) {$\bullet$};
\node (w22) at (1,1) {$\bullet$};
\node (w23) at (1,0) {$\bullet$};

\node (w31) at (2,2) {$\bullet$};
\node (w32) at (2,1) {$\bullet$};
\node (w33) at (2,0) {$\bullet$};

\node (w41) at (3,2) {$\bullet$};
\node (w42) at (3,1) {$\bullet$};

\node (w51) at (4,1) {$\bullet$};

\node (y2) at (0,-.5) {$y_2$};
\node (x2) at (1,-.5) {$x_2$};
\node (y1) at (2,-.5) {$y_1$};
\node (x1) at (3,-.5) {$x_1$};
\node (y0) at (4,-.5) {$y_0$};

\draw[->, thick] (w11) -- (w21);
\draw[->, thick] (w12) -- (w22);

\draw[->, thick] (w31) -- (w21);
\draw[->, thick] (w32) -- (w23);
\draw[->, thick] (w33) -- (w22);

\draw[->, thick] (w31) -- (w41);
\draw[->, thick] (w32) -- (w42);

\draw[->, thick] (w51) -- (w41);

\node (cw11) at (7,2) {$\bullet$};
\node (cw12) at (7,1) {$\bullet$};
\node[red] (cw13) at (7,0) {$\bullet$};

\node (cw21) at (8,2) {$\bullet$};
\node (cw22) at (8,1) {$\bullet$};
\node (cw23) at (8,0) {$\bullet$};

\node (cw31) at (9,2) {$\bullet$};
\node (cw32) at (9,1) {$\bullet$};
\node (cw33) at (9,0) {$\bullet$};

\node (cw41) at (10,2) {$\bullet$};
\node (cw42) at (10,1) {$\bullet$};
\node[red] (cw43) at (10,0) {$\bullet$};

\node (cw51) at (11,1) {$\bullet$};
\node[red] (cw50) at (11,2) {$\bullet$};

\node (cy2) at (7,-.5) {$y_2$};
\node (cx2) at (8,-.5) {$x_2$};
\node (cy1) at (9,-.5) {$y_1$};
\node (cx1) at (10,-.5) {$x_1$};
\node (cy0) at (11,-.5) {$y_0$};

\draw[->, thick] (cw11) -- (cw21);
\draw[->, thick] (cw12) -- (cw22);
\draw[->, thick, dashed, red] (cw13) -- (cw23);

\draw[->, thick] (cw31) -- (cw21);
\draw[->, thick] (cw32) -- (cw23);
\draw[->, thick] (cw33) -- (cw22);

\draw[->, thick] (cw31) -- (cw41);
\draw[->, thick] (cw32) -- (cw42);
\draw[->, thick, dashed, red] (cw33) -- (cw43);

\draw[->, thick] (cw51) -- (cw41);
\draw[->, thick, dashed, red] (cw50) -- (cw42);

\end{tikzpicture}
\end{center}
\caption{A lacing diagram for quiver (1.1) and its associated extended lacing diagram.}
\label{fig:laces}
\end{figure}

Two lacing diagrams are in the same $\GL(\mathbf{d})$ orbit if and only if they have the same number of laces between any two pairs of columns \cite[Lemma 3.2]{KMS}. The length of a lacing diagram $\mathbf{w}$, denoted $|\bf{w}|$, is the number of crossings in its associated extended lacing diagram. 

Let $\Omega^\circ$ denote the open orbit of a type $A$ quiver locus $\Omega$. A lacing diagram $\mathbf{w}\in \Omega^\circ$ is called \textit{minimal} if its length is minimal among all lacing diagrams in $\Omega^\circ$. For example, the lacing diagram in Figure \ref{fig:laces} is minimal. Denote the set of minimal lacing diagrams in $\Omega^\circ$ by $W(\Omega)$. 
By \cite[Proposition 1]{BuchRim}, a lacing diagram $\bw \in W(\Omega)$ can be obtained from any other $\bw' \in W(\Omega)$ through a series of \textit{reduced lacing diagram moves} of the form 
\begin{center}
\begin{tikzpicture}
\node (1) at (-3,1) {$\bullet$}; 
\node (2) at (-3,0) {$\bullet$};
\node (3) at (-2,1) {$\bullet$};
\node (4) at (-2,0) {$\bullet$};
\node (5) at (-1,1) {$\bullet$};
\node (6) at (-1,0) {$\bullet$};

\node (arrow) at (0,.5) {$\longleftrightarrow$};

\node (66) at (3,1) {$\bullet$}; 
\node (55) at (3,0) {$\bullet$};
\node (44) at (2,1) {$\bullet$};
\node (33) at (2,0) {$\bullet$};
\node (22) at (1,1) {$\bullet$};
\node (11) at (1,0) {$\bullet$};

\draw[-, thick] (1) -- (4);
\draw[-, thick] (2) -- (3);
\draw[-, thick] (3) -- (5);
\draw[-, thick] (4) -- (6);

\draw[-, thick] (11) -- (33);
\draw[-, thick] (22) -- (44);
\draw[-, thick] (33) -- (66);
\draw[-, thick] (44) -- (55);
\end{tikzpicture}
\end{center}
where both middle dots and at least one dot in each outer column is non-virtual. 
Define the set of $K$-theoretic lacing diagrams for $\Omega$, denoted $KW(\Omega)$, to be those lacing diagrams which can be obtained from a minimal lacing diagram using \textit{$K$-theoretic lacing diagram moves} of the form
\begin{center}
\begin{tikzpicture}
\node (1) at (-3,1) {$\bullet$}; 
\node (2) at (-3,0) {$\bullet$};
\node (3) at (-2,1) {$\bullet$};
\node (4) at (-2,0) {$\bullet$};
\node (5) at (-1,1) {$\bullet$};
\node (6) at (-1,0) {$\bullet$};

\node (arrow) at (0,.5) {$\longleftrightarrow$};

\node (66) at (3,1) {$\bullet$}; 
\node (55) at (3,0) {$\bullet$};
\node (44) at (2,1) {$\bullet$};
\node (33) at (2,0) {$\bullet$};
\node (22) at (1,1) {$\bullet$};
\node (11) at (1,0) {$\bullet$};

\node (arrow2) at (4, .5) {$\longleftrightarrow$};

\node (666) at (7,1) {$\bullet$}; 
\node (555) at (7,0) {$\bullet$};
\node (444) at (6,1) {$\bullet$};
\node (333) at (6,0) {$\bullet$};
\node (222) at (5,1) {$\bullet$};
\node (111) at (5,0) {$\bullet$};

\draw[-, thick] (1) -- (4);
\draw[-, thick] (2) -- (3);
\draw[-, thick] (3) -- (5);
\draw[-, thick] (4) -- (6);

\draw[-, thick] (11) -- (44);
\draw[-, thick] (22) -- (33);
\draw[-, thick] (33) -- (66);
\draw[-, thick] (44) -- (55);

\draw[-, thick] (111) -- (333);
\draw[-, thick] (222) -- (444);
\draw[-, thick] (333) -- (666);
\draw[-, thick] (444) -- (555);
\end{tikzpicture}
\end{center}
where both of the dots in the middle column are non-virtual and are adjacent in their column, and at least one dot in each of the outer columns is non-virtual. We would like to emphasize that lacing diagram moves are always applied to the completed diagrams. 

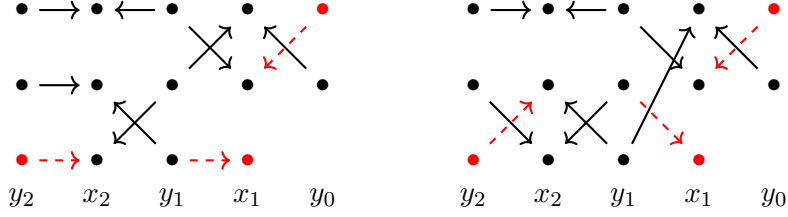
\begin{figure}
\begin{center}
\begin{tikzpicture}
\node (w11) at (0,2) {$\bullet$};
\node (w12) at (0,1) {$\bullet$};
\node[red] (w13) at (0,0) {$\bullet$};

\node (w21) at (1,2) {$\bullet$};
\node (w22) at (1,1) {$\bullet$};
\node (w23) at (1,0) {$\bullet$};

\node (w31) at (2,2) {$\bullet$};
\node (w32) at (2,1) {$\bullet$};
\node (w33) at (2,0) {$\bullet$};

\node (w41) at (3,2) {$\bullet$};
\node (w42) at (3,1) {$\bullet$};
\node[red] (w43) at (3,0) {$\bullet$};

\node (w51) at (4,1) {$\bullet$};
\node[red] (w50) at (4,2) {$\bullet$};

\node (y2) at (0,-.5) {$y_2$};
\node (x2) at (1,-.5) {$x_2$};
\node (y1) at (2,-.5) {$y_1$};
\node (x1) at (3,-.5) {$x_1$};
\node (y0) at (4,-.5) {$y_0$};

\draw[->, thick] (w11) -- (w21);
\draw[->, thick] (w12) -- (w22);
\draw[->, thick, dashed, red] (w13) -- (w23);

\draw[->, thick] (w31) -- (w21);
\draw[->, thick] (w32) -- (w23);
\draw[->, thick] (w33) -- (w22);

\draw[->, thick] (w31) -- (w42);
\draw[->, thick] (w32) -- (w41);
\draw[->, thick, dashed, red] (w33) -- (w43);

\draw[->, thick] (w51) -- (w41);
\draw[->, thick, dashed, red] (w50) -- (w42);

\node (cw11) at (6,2) {$\bullet$};
\node (cw12) at (6,1) {$\bullet$};
\node[red] (cw13) at (6,0) {$\bullet$};

\node (cw21) at (7,2) {$\bullet$};
\node (cw22) at (7,1) {$\bullet$};
\node (cw23) at (7,0) {$\bullet$};

\node (cw31) at (8,2) {$\bullet$};
\node (cw32) at (8,1) {$\bullet$};
\node (cw33) at (8,0) {$\bullet$};

\node (cw41) at (9,2) {$\bullet$};
\node (cw42) at (9,1) {$\bullet$};
\node[red] (cw43) at (9,0) {$\bullet$};

\node (cw51) at (10,1) {$\bullet$};
\node[red] (cw50) at (10,2) {$\bullet$};

\node (cy2) at (6,-.5) {$y_2$};
\node (cx2) at (7,-.5) {$x_2$};
\node (cy1) at (8,-.5) {$y_1$};
\node (cx1) at (9,-.5) {$x_1$};
\node (cy0) at (10,-.5) {$y_0$};

\draw[->, thick] (cw11) -- (cw21);
\draw[->, thick] (cw12) -- (cw23);
\draw[->, thick, dashed, red] (cw13) -- (cw22);

\draw[->, thick] (cw31) -- (cw21);
\draw[->, thick] (cw32) -- (cw23);
\draw[->, thick] (cw33) -- (cw22);

\draw[->, thick] (cw31) -- (cw42);
\draw[->, thick, dashed, red] (cw32) -- (cw43);
\draw[->, thick] (cw33) -- (cw41);

\draw[->, thick] (cw51) -- (cw41);
\draw[->, thick, dashed, red] (cw50) -- (cw42);
\end{tikzpicture}
\end{center}
\caption{Two $K$-theoretic lacing diagrams obtained from the minimal lacing diagram from Figure \ref{fig:laces}.}
\label{fig:KTheoryLaces}
\end{figure}

\begin{example}\label{ex:KThDiagrams}
    Starting with the extended lacing diagram in Figure \ref{fig:laces}, we can obtain the rightmost $K$-theoretic lacing diagram in Figure \ref{fig:KTheoryLaces} through the following two intermediate lacing diagrams: 
    \begin{center}
        \begin{tikzpicture}
\node (w11) at (0,2) {$\bullet$};
\node (w12) at (0,1) {$\bullet$};
\node[red] (w13) at (0,0) {$\bullet$};

\node (w21) at (1,2) {$\bullet$};
\node (w22) at (1,1) {$\bullet$};
\node (w23) at (1,0) {$\bullet$};

\node (w31) at (2,2) {$\bullet$};
\node (w32) at (2,1) {$\bullet$};
\node (w33) at (2,0) {$\bullet$};

\node (w41) at (3,2) {$\bullet$};
\node (w42) at (3,1) {$\bullet$};
\node[red] (w43) at (3,0) {$\bullet$};

\node (w51) at (4,1) {$\bullet$};
\node[red] (w52) at (4,2) {$\bullet$};

\node (y2) at (0,-.5) {$y_2$};
\node (x2) at (1,-.5) {$x_2$};
\node (y1) at (2,-.5) {$y_1$};
\node (x1) at (3,-.5) {$x_1$};
\node (y0) at (4,-.5) {$y_0$};

\draw[->, thick] (w11) -- (w21);
\draw[->, thick] (w12) -- (w23);
\draw[->, thick, dashed, red] (w13) -- (w22);

\draw[->, thick] (w31) -- (w21);
\draw[->, thick] (w32) -- (w23);
\draw[->, thick] (w33) -- (w22);

\draw[->, thick] (w31) -- (w41);
\draw[->, thick] (w32) -- (w42);
\draw[->, thick, dashed, red] (w33) -- (w43);

\draw[->, thick] (w51) -- (w41);
\draw[->, thick, dashed, red] (w52) -- (w42);

\node (cw11) at (6,2) {$\bullet$};
\node (cw12) at (6,1) {$\bullet$};
\node[red] (cw13) at (6,0) {$\bullet$};

\node (cw21) at (7,2) {$\bullet$};
\node (cw22) at (7,1) {$\bullet$};
\node (cw23) at (7,0) {$\bullet$};

\node (cw31) at (8,2) {$\bullet$};
\node (cw32) at (8,1) {$\bullet$};
\node (cw33) at (8,0) {$\bullet$};

\node (cw41) at (9,2) {$\bullet$};
\node (cw42) at (9,1) {$\bullet$};
\node[red] (cw43) at (9,0) {$\bullet$};

\node (cw51) at (10,1) {$\bullet$};
\node[red] (cw50) at (10,2) {$\bullet$};

\node (cy2) at (6,-.5) {$y_2$};
\node (cx2) at (7,-.5) {$x_2$};
\node (cy1) at (8,-.5) {$y_1$};
\node (cx1) at (9,-.5) {$x_1$};
\node (cy0) at (10,-.5) {$y_0$};

\draw[->, thick] (cw11) -- (cw21);
\draw[->, thick] (cw12) -- (cw23);
\draw[->, thick, dashed, red] (cw13) -- (cw22);

\draw[->, thick] (cw31) -- (cw21);
\draw[->, thick] (cw32) -- (cw23);
\draw[->, thick] (cw33) -- (cw22);

\draw[->, thick] (cw31) -- (cw41);
\draw[->, thick, dashed, red] (cw32) -- (cw43);
\draw[->, thick] (cw33) -- (cw42);

\draw[->, thick] (cw51) -- (cw41);
\draw[->, thick, dashed, red] (cw50) -- (cw42);
        \end{tikzpicture}
    \end{center}
\end{example}

\subsection{Block structures on $d\times d$ matrices and the bipartite Zelevinsky permutation}\label{sect:Zperm} \label{sec:2.3}

We now recall the \emph{bipartite Zelevinsky permutation}, first introduced in \cite{KR}. Let $\bd$ be a dimension vector for a bipartite type $A$ quiver $Q$. Let $d_x=\sum_{j=1}^n \mathbf{d}(x_j)$, $d_y=\sum_{j=0}^n \mathbf{d}(y_j)$, and $d = d_x+d_y$. Throughout the paper, any $d\times d$ matrix (or grid) is given a \textit{block structure} by separating the rows into blocks labeled $y_0,\dots, y_n, x_n,\dots, x_1$ reading from top to bottom, and separating the columns into blocks labeled $x_n,\dots, x_1,y_0,\dots, y_n$ reading from left to right. A block row or column labeled by $v\in Q_0$ has size $\bd(v)$.   
We label certain blocks in the northwest $d_y\times d_x$ submatrix by arrows in $Q$: the block at the intersection of block row $y_{i-1}$ and block column $x_i$ will be called \textit{block} $\alpha_i$, and the block at the intersection of block row $y_{i}$ and block column $x_i$ will be called \textit{block} $\beta_i$. For example, in Figure \ref{fig:Zperm}, the $3\times 3$ identity matrix in block row $y_1$ and block column $x_2$ is block $\alpha_2$.  Define the \textit{snake region} of $M$ to be all $(i,j)$ such that $(i,j)$ is in some $\alpha_k$ block or some $\beta_k$ block.

Let $\Omega\subseteq \rep_Q(\bd)$ be a bipartite type $A$ quiver locus, let $\Omega^\circ$ be its open orbit, and let $\bw\in \Omega^\circ$ be a lacing diagram. The \textit{bipartite Zelevinsky permutation}  $v(\Omega)\in S_d$ is the unique permutation satisfying the following three conditions (see  \cite[Proposition 2.13]{KKR}): 
\begin{itemize}
\item[(\textit{Z1})] for $z_i, z_j \in Q_0$ with $z_i$ to the left of $z_j$ in $Q$ (including the case $z_i=z_j$), the number of $1$s in block $(z_i, z_j)$ is the number of laces in $\bw$ with left endpoint $z_i$ and right endpoint $z_j$;
\item[(\textit{Z2})] for $z_i, z_j \in Q_0$ with $z_i$ exactly one vertex to the right of $z_j$ in $Q$, the number of $1$s in block $(z_i, z_j)$ is the number of arrows between the columns of $\mathbf{w}$ indexed by $z_i$ and $z_j$;
\item[(\textit{Z3})] the $1$s are arranged from northwest to southeast in each block row and column. 
\end{itemize}

Note that for any $z_i, z_j \in Q_0$, the number of laces starting at $z_i$ and ending at $z_j$ is the same for every $\bw \in \Omega^{\circ}$. It follows that $v(\Omega)$ is independent of the choice of lacing diagram $\bw$ that appears in the defining properties $(Z1)$ and $(Z2)$. The Zelevinsky permutation associated to the lacing diagram on the left of Figure \ref{fig:laces}  is depicted in Figure \ref{fig:Zperm}. We denote by $v_\ast$ the Zelevinsky permutation associated to the representation space $\rep_{Q}(\bd)$.

\begin{figure}
\small{
\begin{align*}
\begin{blockarray}{cccc|cc|c|ccc|cc}
            &x_2 & &  & x_1 & & y_0 & y_1 &  &  & y_2 & & \\
      \begin{block}{c[ccccc|cccccc]}
y_0 &  &  &  & 1 & 0 &  & \\
\cline{1-1}
y_1 & 1 & 0 & 0 &  &  & &   \\
& 0 & 1 & 0 &  &  & & &  \\
& 0 & 0 & 1 &  &  & & & & \\
\cline{1-1}
y_2 &  &  &  & & & 1 & 0 & 0 & 0 & \\
 &  &  &  & & & 0 & 1 & 0 & 0 & &  \\
\cline{1-12}
x_2 &  &  &  &  0 & 1 &  \\
 &  &  &  &  0 & 0 &  &  &  &  &  1  &  0  & \\
 &  &  &  &  0 & 0 &  &  &  &  &  0  &  1  &\\
\cline{1-1}
x_1 & & & & & & & 0 & 1 & 0 & \\
 & & & & & & & 0 & 0 & 1 &\\
      \end{block}
   \end{blockarray}
    \end{align*}
    }
\caption{The Zelevinsky permutation matrix associated to the lacing diagram $\mathbf{w}$ from Figure \ref{fig:laces}.}
\label{fig:Zperm}
\end{figure}

\subsection{Pipe dreams} \label{sec:pipes}

A \textit{pipe dream} on a $k\times \ell$ grid is a subset $P \subseteq \{1, \dots, k\} \times \{1, \dots, \ell\}$. It is diagrammatically represented by tiling a $k \times \ell$ grid of squares with two types of tiles: place a \emph{cross tile} $\textcross$ in square $(i,j)$ if $(i,j)\in P$ and place an \emph{elbow tile} $\textturn$ in square $(i,j)$ if $(i,j)\notin P$. Let $|P|$ denote the number of elements (equivalently, cross tiles) in $P$. Let $\tau_1, \tau_2, \dots$ be the Coxeter generators of $S_{\infty}$ which agree with simple transpositions defined previously. 
A pipe dream $P$ yields a word 
\[\tau_{i_1+j_1-1}\tau_{i_2+j_2-1}\cdots \tau_{i_r+j_r-1}\,,\]
where $|P| = r$ and $(i_k,j_k)$ is the location of the $k^{\rm{th}}$ cross tile that appears in $P$ when reading along rows, from right to left, starting from the the top row and proceeding downwards. 

Evaluation of this word subject to the relations 
\begin{align*}
\tau_i^2=\tau_i, \quad \tau_i \tau_{i+1}\tau_i=\tau_{i+1}\tau_i\tau_{i+1}, \quad \tau_i\tau_j=\tau_j\tau_i \quad \text{for} \, |i-j| \geq 2 \,,
\end{align*}
yields a permutation $\delta(P) \in S_{k +\ell}$ called the \textit{Demazure product} (see, e.g., \cite[\S2]{Miller}). Given a permutation $v$, we say that $P$ is a \textit{pipe dream for} $v$ if $\delta(P)=v$. We say that $P$ is a \textit{reduced pipe dream} for $v$ if $P \in \Pipes(v)$ and $|P|=\ell(v)$. Equivalently, $P$ is reduced if and only if any pair of pipes crosses at most once \cite[Definition 16.2]{MillerSturmfels2005}. We denote the set of pipe dreams for a permutation $v \in S_m$ defined on a $m\times m$ grid by $\Pipes(v)$, and the subset of reduced pipe dreams by $\RPipes(v)$.

 For a permutation $v \in S_d$, denote by $\Pipes(v_0, v)$ the subset of $\Pipes(v)$ consisting of pipe dreams $P$ such that all cross tiles of  $P$ are contained within the northwest justified $d_y \times d_x$ subgrid of the $d \times d$ grid\footnote{This notation matches that found in \cite{KKR}. There, $v_0$ was the permutation in $S_d$ written in one line notation as $(d_x+1)(d_x+2)\cdots d~1~2\dots d_x$, and $\Pipes(v_0,v)$ was defined as the set of pipe dreams for $v$ supported on the Rothe diagram of $v_0$.}. There is clearly a bijection between $\Pipes(v_0,v)$ and the set of pipe dreams for $v$ on a $d_y\times d_x$ grid, so when drawing elements of $\Pipes(v_0,v)$, we only draw the northwest $d_y\times d_x$ part (see Figure \ref{fig:Pipes}). Finally, let $P_*$ be the pipe dream on a $d\times d$ grid such that $(i,j) \in P_*$ if and only if $(i,j)$ is simultaneously inside the northwest justified $d_y\times d_x$ subgrid and is \emph{outside} of the snake region (see Figure \ref{fig:Pipes}). If $P$ is a reduced pipe dream for $v\in S_m$ supported on a $m\times m$ grid, then we may recover the matrix of $v$ by putting a 1 in matrix entry $(i,j)$ if and only if the pipe entering the west wall of $P$ in row $i$ exits the north wall of $P$ in column $j$ \cite[Lemma 2.21]{KKR}. 

If $w$ is a $k \times \ell$ partial permutation matrix, then we define $\Pipes(w)$ to be all $k \times \ell$ pipe dreams $P$ such that $\delta(P)=c(w)$.\footnote{Here we are potentially extending $c(w)$ to an element of $S_{k+\ell}$ as indicated in Remark \ref{rem:completions}} We denote by $\RPipes(w) \subset \Pipes(w)$ the subset of reduced pipe dreams for $w$. 

Given $P \in \Pipes(w)$ (which by definition lies on a $k \times \ell$ grid), we can obtain a pipe dream for $c(w)$ by extending $P$ to an $m \times m$ grid by adding in elbow tiles in all positions $(i,j)$ where $i>k$ or $j >\ell$. In fact, as the following lemma shows, this operation of ``extension via elbow tiles'' produces all pipe dreams for $c(w)$. 

\begin{lemma}\label{lem:completedPipes}
    Let $w$ be a $k \times \ell$ partial permutation matrix and let $c(w) \in S_m$ be its minimal length completion. Then, the extension via elbow tiles map gives us identifications
    \begin{align*}
        \RPipes(w)=\RPipes(c(w)) \quad\quad \Pipes(w)=\Pipes(c(w))\,.
    \end{align*}
    Moreover, if $v \in S_{m'}$ is any extension of $c(w)$ by the identity permutation as in Remark \ref{rem:completions}, then $\RPipes(w)=\RPipes(v)$ and $\Pipes(w)=\Pipes(v)$ as well. 
\end{lemma}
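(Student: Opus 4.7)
The extension via elbow tiles is clearly a well-defined injection: adding elbow tiles outside the $k\times \ell$ subgrid does not alter the word read from the pipe dream, so both the Demazure product and the cross count are preserved, and the original pipe dream is recovered by truncating. The content of the lemma is therefore surjectivity---I must show that every pipe dream $P'$ for $c(w)$ on the $m\times m$ grid, and every pipe dream for an identity extension $v\in S_{m'}$, has all its crosses confined to the $k\times \ell$ northwest subgrid.

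The plan is to establish two symmetric claims: (A) $P'$ has no cross in a row $i>k$; (B) $P'$ has no cross in a column $j>\ell$. For (A), I split $P'=P'_{\leq k}\sqcup P'_{>k}$ by row. Because the reading order processes rows top-to-bottom, the word of $P'$ factors as the word of $P'_{\leq k}$ followed by the word of $P'_{>k}$, so $c(w)=\delta(P')=u\ast_D v$ with $u=\delta(P'_{\leq k})$ and $v=\delta(P'_{>k})$. The crosses in $P'_{>k}$ contribute only simple transpositions $\tau_K$ with $K=i+j-1\geq k+1$, so $v$ fixes every value $\leq k$, and iterating the Demazure multiplication gives $u(j)=c(w)(j)$ for all $j\leq k$. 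The remaining values of $u$ at positions $k+1,\dots,m$ are then exactly the empty columns $j_1<\cdots<j_c$ of $w$, permuted by some $\sigma\in S_c$. Minimality of $c(w)$ forces $c(w)(k+a)=j_a$ in strictly increasing order (otherwise an adjacent swap of two ``new'' $1$s would reduce the inversion count), so restricting $u\ast_D v=c(w)$ to positions $>k$ yields $\sigma\ast_D \widetilde{v}=e$ in $S_c$, where $\widetilde{v}$ is the restriction of $v$. Since Demazure products are length-non-decreasing, this forces $\sigma=e$ and $\widetilde{v}=e$, hence $v=e$; and since the Demazure product of any nonempty word is nontrivial, $P'_{>k}=\emptyset$.

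For (B), I pass to the transpose pipe dream $(P')^T$, which is a pipe dream for $c(w)^{-1}=c(w^T)$, where $w^T$ is the $\ell\times k$ transposed partial permutation matrix. Applying (A) to $(P')^T$ gives no crosses in rows $>\ell$ of $(P')^T$, equivalently no crosses in columns $>\ell$ of $P'$. Combining (A) and (B) proves the first identifications. The moreover clause follows from the auxiliary claim that if a word uses any letter $\tau_K$ with $K\geq m$, then its Demazure product cannot lie in $S_m$---this is a short induction on word length analyzing the ascent and descent cases of the final letter and tracking the action on the indices $K$ and $K+1$. Since the identity extension $v\in S_{m'}$ lies in $S_m$, no pipe dream for $v$ can contain a cross at $(i,j)$ with $i+j>m$; in particular all crosses lie in the $m\times m$ northwest subgrid, to which (A) and (B) then apply.

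The hard step is (A), where one must simultaneously eliminate $\sigma$ and $\widetilde{v}$ by exploiting the increasing structure of the minimal completion---this is the combinatorial heart of the argument. I also need to verify the convention-dependent statement $\delta((P')^T)=\delta(P')^{-1}$ that underlies (B); this reduces to noting that the row reading order of $(P')^T$ and the column reading order of $P'$ differ only by commutations of non-adjacent simple transpositions, which alter the word but not the Demazure product.
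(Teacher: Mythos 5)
Your proof is correct in outline but takes a genuinely different, more self-contained route than the paper. The paper disposes of the lemma with two citations: reduced pipe dreams for $c(w)$ have all their crosses in the northwest $k\times\ell$ rectangle (\cite[Exercise 16.2]{MillerSturmfels2005}), and every possibly non-reduced pipe dream for a permutation is a union of reduced ones (\cite[Lemma 2]{Miller}), so the non-reduced case inherits the support constraint from the reduced one. You instead prove the support constraint directly in the $0$-Hecke monoid: factor the reading word by rows, descend the Demazure recursion to the flattened permutation on positions $>k$, and use minimality of $c(w)$ together with the length-non-decreasing property of Demazure products to force $\sigma=e$ and $\widetilde{v}=e$. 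Your approach buys independence from the ``union of reduced pipe dreams'' lemma and treats the reduced and non-reduced cases uniformly; the paper's is much shorter.

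Two points in your write-up need repair, though neither is fatal. First, for step (A) on the full $m\times m$ grid you must first confine all crosses of $P'$ to antidiagonals $K=i+j-1<m$; otherwise $u$ and $v$ need not lie in $S_m$ and the identification of $u(\{k+1,\dots,m\})$ with the empty columns of $w$ breaks down. This is exactly your auxiliary claim (the Demazure product of a word dominates each of its letters in Bruhat order, and no $\tau_K$ with $K\geq m$ lies below an element of $S_m$), which you invoke only for the ``moreover'' clause but need at the outset of (A) as well. Second, your justification of $\delta((P')^T)=\delta(P')^{-1}$ is wrong as stated: the row reading word of $(P')^T$ is the \emph{reversal} of the column reading word of $P'$, not a commutation-equivalent rearrangement of it. The identity still holds, because column reading computes $\delta(P')$ (Fomin--Kirillov, as used in the paper's Lemma \ref{prop:goodPipes}) and word reversal is an anti-automorphism for the Demazure product; that is the argument you should give for step (B).
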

\begin{proof}

It is a standard fact \cite[Exercise 16.2]{MillerSturmfels2005} that if $P \in \RPipes(c(w))$ then all of the cross tiles in $P$ occur in the northwest $k \times \ell$ subgrid. In other words, extension via elbow tiles is a surjective map $\RPipes(w)\to \RPipes(c(w))$ which is clearly injective as well, proving the first equality. 

To see that $\Pipes(w)=\Pipes(c(w))$ we recall from \cite[Lemma 2]{Miller} that every $P \in \Pipes(c(w))$ can be written as a union of elements of $\RPipes(c(w))$. But every element of $\RPipes(c(w))$ is an extension via elbow tiles of an element of $\RPipes(w)$, and therefore has cross tiles only in its northwest $k \times \ell$ subgrid. Therefore every cross tile of $P$ is contained in its northwest $k \times \ell$ subgrid as well. Let $P'$ be the restriction of $P$ to its northwest $k \times \ell$ subgrid. Then by definition of $\Pipes(w)$ we have $P' \in \Pipes(w)$ and $P$ is obtained from $P'$ by extension via elbow tiles. This completes the proof that $\Pipes(w)=\Pipes(c(w))$.

The final claim follows from what we have already shown. Indeed, $v \in S_{m'}$ is the minimal extension of the block matrix 
\begin{align*}
\begin{bmatrix}
    \quad c(w) & | & \text{0}_{m\times (m'-m)} \quad
\end{bmatrix}
\end{align*}
where $0_{m \times (m'-m)}$ is the $m \times (m'-m)$ matrix with only zero entries. 
\end{proof}

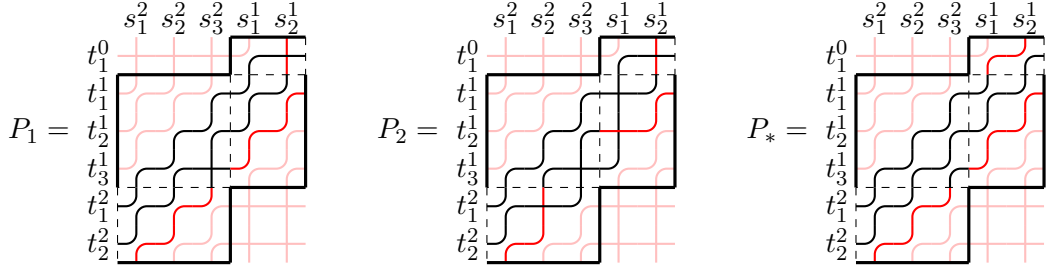
\begin{figure}
$P_1 = \vcenter{\hbox{\begin{tikzpicture}[scale=1,>=latex]
    \pipedream{0.5}{(0,0)}{$s^2_1$,$s^2_2$,$s^2_3$,$s^1_1$,$s^1_2$}
    {$t^0_1$,$t^1_1$,$t^1_2$,$t^1_3$,$t^2_1$,$t^2_2$}
    {%Always + part
      1/1/1/pink/pink,1/2/1/pink/pink,1/3/1/pink/pink,
      5/4/1/pink/pink,5/5/1/pink/pink,
      6/4/1/pink/pink,6/5/1/pink/pink,
      %variable pipes
      1/4/0/pink/black,1/5/1/red/black,
      2/1/0/pink/pink,2/2/0/pink/pink,2/3/0/pink/black,2/4/0/black/black,2/5/0/black/red,
      3/1/0/pink/pink,3/2/0/pink/black,3/3/0/black/black,3/4/0/black/red,3/5/0/red/pink,
      4/1/0/pink/black,4/2/0/black/black,4/3/1/black/black,4/4/0/red/pink,4/5/0/pink/pink,
      5/1/0/black/black,5/2/0/black/red,5/3/0/red/pink,
      6/1/0/black/red, 6/2/0/red/pink, 6/3/0/pink/pink}
    %outline of snake
    \latticepath{0.5}{(2.5,-0.5)}{very thick}{0/-3/black,-2/0/black,0/-2/black,-3/0/black}
    \latticepath{0.5}{(2.5,0)}{very thick}{-2/0/black, 0/-1/black, -3/0/black, 0/-3/black}
    \latticepath{0.5}{(2.5,0)}{dashed}{0/-1/black, -2/0/black, 0/-3/black, -3/0/black, 0/-2/black}
  \end{tikzpicture}}}$
  \qquad 
 $P_2 = \vcenter{\hbox{\begin{tikzpicture}[scale=1,>=latex]
    \pipedream{0.5}{(0,0)}{$s^2_1$,$s^2_2$,$s^2_3$,$s^1_1$,$s^1_2$}
    {$t^0_1$,$t^1_1$,$t^1_2$,$t^1_3$,$t^2_1$,$t^2_2$}
    {%Always + part
      1/1/1/pink/pink,1/2/1/pink/pink,1/3/1/pink/pink,
      5/4/1/pink/pink,5/5/1/pink/pink,
      6/4/1/pink/pink,6/5/1/pink/pink,
      %variable pipes
      1/4/0/pink/black,1/5/1/red/black,
      2/1/0/pink/pink,2/2/0/pink/pink,2/3/0/pink/black,2/4/1/black/black,2/5/0/black/red,
      3/1/0/pink/pink,3/2/0/pink/black,3/3/0/black/black,3/4/1/black/red,3/5/0/red/pink,
      4/1/0/pink/black,4/2/0/black/black,4/3/1/black/black,4/4/0/black/pink,4/5/0/pink/pink,
      5/1/0/black/black,5/2/1/red/black,5/3/0/black/pink,
      6/1/0/black/red, 6/2/0/red/pink, 6/3/0/pink/pink}
    %outline of snake
    \latticepath{0.5}{(2.5,-0.5)}{very thick}{0/-3/black,-2/0/black,0/-2/black,-3/0/black}
    \latticepath{0.5}{(2.5,0)}{very thick}{-2/0/black, 0/-1/black, -3/0/black, 0/-3/black}
    \latticepath{0.5}{(2.5,0)}{dashed}{0/-1/black, -2/0/black, 0/-3/black, -3/0/black, 0/-2/black}
  \end{tikzpicture}}}$
  \qquad 
 $P_* = \vcenter{\hbox{\begin{tikzpicture}[scale=1,>=latex]
    \pipedream{0.5}{(0,0)}{$s^2_1$,$s^2_2$,$s^2_3$,$s^1_1$,$s^1_2$}
    {$t^0_1$,$t^1_1$,$t^1_2$,$t^1_3$,$t^2_1$,$t^2_2$}
    {%Always + part
      1/1/1/pink/pink,1/2/1/pink/pink,1/3/1/pink/pink,
      5/4/1/pink/pink,5/5/1/pink/pink,
      6/4/1/pink/pink,6/5/1/pink/pink,
      %variable pipes
      1/4/0/pink/red,1/5/0/red/black,
      2/1/0/pink/pink,2/2/0/pink/pink,2/3/0/pink/black,2/4/0/black/black,2/5/0/black/red,
      3/1/0/pink/pink,3/2/0/pink/black,3/3/0/black/black,3/4/0/black/red,3/5/0/red/pink,
      4/1/0/pink/black,4/2/0/black/black,4/3/0/black/black,4/4/0/red/pink,4/5/0/pink/pink,
      5/1/0/black/black,5/2/0/black/red,5/3/0/red/pink,
      6/1/0/black/red, 6/2/0/red/pink, 6/3/0/pink/pink}
    %outline of snake
    \latticepath{0.5}{(2.5,-0.5)}{very thick}{0/-3/black,-2/0/black,0/-2/black,-3/0/black}
    \latticepath{0.5}{(2.5,0)}{very thick}{-2/0/black, 0/-1/black, -3/0/black, 0/-3/black}
    \latticepath{0.5}{(2.5,0)}{dashed}{0/-1/black, -2/0/black, 0/-3/black, -3/0/black, 0/-2/black}
  \end{tikzpicture}}}$
\caption{$P_1,P_2\in \Pipes(v_0,v(\Omega))$, where $v(\Omega)$ is the permutation from Figure \ref{fig:Zperm}. $P_1$ is reduced while $P_2$ is not.  $P_*$ is defined above. The snake region is outlined with bold black lines. }
\label{fig:Pipes}
\end{figure}

\subsection{Permutations and laces}\label{sec:lacesandperms}
Define symmetric groups associated to the arrows $\alpha_k, \beta_k$ of the bipartite quiver $Q$ by $S_{\alpha_k} := S_{\bd(y_{k-1})+\bd(x_k)}$ and $S_{\beta_k} := S_{\bd(y_k)+\bd(x_k)}$. Let $S_{\bd} = \prod_{k=1}^n (S_{\beta_k}\times S_{\alpha_k})$ 
and label a typical element by $\bv = (v_n, v^n,\dots, v_1,v^1)$, with $v_k\in S_{\beta_k}$ and $v^k\in S_{\alpha_k}$. We define $\ell(\bv)=\sum_i \ell(v_i) + \ell(v^i)$. Given a lacing diagram $\mathbf{w}=(w_n, w^n, \dots, w_1, w^1)$, we may view it as an element of $S_\mathbf{d}$ by completing it to $(c(w_n),c(\text{rot}(w^n)), \dots, c(w_1),c(\text{rot}(w^1)) \in S_\mathbf{d}$. We encode this operation as a function $\fc : \Laces(\bd) \to S_{\mathbf{d}}$ sending 
\begin{align}\label{eqn:permOperator}
    (w_n, w^n, \dots, w_1, w^1) &\mapsto (c(w_n), c(\rot(w^n)), \dots, c(w_1), c(\rot(w^1))),
\end{align}
Here, $\Laces(\bd)$ is the set of all possible lacing diagrams for the type $A$ quiver $Q$ with dimension vector $\bd$.
Note that we are viewing $c(w_k)$ as an element of $S_{\alpha_k}$ and $c(\rot(w^k)) \in S_{\beta_k}$ as in Remark \ref{rem:completions}.

If $\bv \in S_\bd$ is of the form $\fc(\bw)$ for some $\bw \in \Laces(\bd)$, we can recover the original sequence of partial permutations defining $\bw$ by a suitable ``truncation''. In detail, we define a map $LD: S_{\bd} \to \Laces(\bd)$ by 
\begin{align}
    LD(\bv):=(t_n(v_n), \rot(t^n(v^n)), \dots, t_1(v_1), \rot(t^1(v^1))),
\end{align}
where if $v \in S_{\bd(x_k)+\bd(y_k)}$ then $t_k(v)$ is the partial permutation matrix obtained by restricting the permutation matrix of $v$ to the northwest $\bd(y_k) \times \bd(x_k)$ submatrix. Similarly, if $v \in S_{\bd(y_{k-1}) + \bd(x_{k})}$, then we define $t^k(v)$ to be the partial permutation obtained by restricting to the northwest $\bd(y_{k-1})\times \bd(x_k)$ submatrix.  The following lemma follows immediately from the definitions.

\begin{lemma}\label{lem:LDPerm}
 The map $\fc: \Laces(\bd) \to S_{\bd}$ is injective, with left inverse $LD$. That is, $LD(\fc(\bw))=\bw$ for all $\bw \in \Laces(\bd)$. 
\end{lemma}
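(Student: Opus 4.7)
The plan is to verify the lemma directly from the definitions, treating each component of a lacing diagram separately. Injectivity of $\fc$ follows formally once we establish that $LD \circ \fc = \id$ on $\Laces(\bd)$, so the substantive content is the identity $LD(\fc(\bw)) = \bw$.

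First I would reduce to a single partial permutation. Given $\bw = (w_n, w^n, \dots, w_1, w^1)$, both $\fc$ and $LD$ act coordinate-wise (with a rotation inserted on the $w^k$ coordinates), so it suffices to prove two atomic claims: for any partial permutation matrix $w$ of size $a \times b$,
\begin{equation*}
t(c(w)) = w,
\end{equation*}
where $t$ denotes truncation to the northwest $a \times b$ subgrid; and, for the left-pointing arrows, $\rot(t(c(\rot(w)))) = w$, which is immediate from the first identity by applying $\rot$ twice.

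The atomic identity $t(c(w)) = w$ is essentially the definition of $c(w)$: the completion $c(w)$ is, by construction, a permutation matrix in whose northwest corner $w$ sits as a submatrix. Hence restricting the permutation matrix $c(w)$ back to its northwest $a \times b$ block returns precisely the original entries of $w$. I would spell this out explicitly to make the indexing unambiguous, including the convention from Remark \ref{rem:completions} that $c(w)$ may be padded by identity rows/columns to an ambient $S_{a+b}$; the padding lies strictly outside the northwest $a \times b$ block and so does not affect the truncation.

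Combining these, for each $k$ we get $t_k(c(w_k)) = w_k$ and $\rot(t^k(c(\rot(w^k)))) = \rot(\rot(w^k)) = w^k$, so comparing with the definitions of $\fc$ in \eqref{eqn:permOperator} and of $LD$, we obtain $LD(\fc(\bw)) = \bw$ coordinate by coordinate. Finally, any map with a left inverse is injective, so $\fc$ is injective. The only subtle point I anticipate is bookkeeping around the rotation and the ambient symmetric group for each coordinate (i.e., ensuring $c(w_k)\in S_{\alpha_k}$ and $c(\rot(w^k)) \in S_{\beta_k}$ match the sizes used by $t_k$ and $t^k$), but this is a direct check against the size conventions fixed in Section \ref{sec:lacesandperms}.
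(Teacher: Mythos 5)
Your proposal is correct and matches the paper's treatment: the paper gives no written proof, stating only that the lemma "follows immediately from the definitions," and your coordinate-wise verification of $t(c(w))=w$ (plus the double-rotation bookkeeping for the left-pointing arrows) is exactly the intended direct check. The one point you rightly flag — that the identity padding from Remark \ref{rem:completions} lies outside the northwest $a\times b$ block and so cannot disturb the truncation — is the only detail worth making explicit.
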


\subsection{Relating pipes and laces}\label{sec:pipestolaces} 

Let $\Pipes(\bd)$ denote the set of all pipe dreams on a $d_y\times d_x$ grid which contain $P_*$. If $P \in \Pipes(\bd)$ and $v=\delta(P)$ then we will also view $P$ as an element of $\Pipes(v_0, v)$, in particular, as a pipe dream on a $d\times d$ grid. Let $P^k$ be the \emph{mini pipe
dream} on the $\bd(y_{k-1}) \times \bd(x_k)$ grid extracted from $P\in \Pipes(\bd)$ by restricting $P$ to the block of the snake
region indexed by $\alpha_k$, and let $P_k$ be the mini pipe dream on the $\bd(y_k)\times \bd(x_k)$ grid extracted from $P$ by restricting to the block of the snake region indexed by $\beta_k$. Given a pipe dream $P \in \Pipes(\bd)$, we define the operation $\pi(P) =\mathbf{v} \in S_\mathbf{d}$ by $\mathbf{v}=(v_n, v^n, \dots, v_1, v^1)$ where $v_k=\delta(P_k)$ and $v^k=\delta( \rot(P^k))$. 

We can relate pipe dreams to lacing diagrams in the following way. For integers $k,\ell >0$ and any permutation $v$, we define the ``\textit{follow the pipes}'' operation $\fw$ mapping $k \times \ell$ pipe dreams in $\Pipes(v)$\footnote{By this we mean pipe dreams $P \in \Pipes(v)$ whose cross tiles appear only in the northwest $k \times \ell$ subgrid.} to $k \times \ell$ partial permutation matrices, as follows: given $P' \in \RPipes(v)$ on a $k \times \ell$ grid, we obtain a partial permutation matrix by following the pipes from the left boundary to the top boundary. That is, $\fw(P)$ has a $1$ in entry $(i,j)$ precisely when there is a pipe running from the $i^{\rm{th}}$ row of $P$ to the $j^{\rm{th}}$ column. If $P \in \Pipes(v)$ is non-reduced, then whenever a pair of pipes crosses more than once we remove the cross tiles at their intersection points (starting northeast and moving southwest) until they have a unique crossing, and then read the resulting partial permutation as before. Observe that if $P \in \RPipes(w)$ for some partial permutation $w$ then $\fw(P)=w$. We extend this to an operation $\mathfrak{w}$, called ``\emph{pipes to laces}'', on a $d_y \times d_x$ pipe dream $P$ with $P_* \subseteq P$ 
by defining $\mathfrak{w}(P)$ to be the lacing diagram 
\begin{align*}
\fw(P):=(\mathfrak{w}(P_n),  \rot(\mathfrak{w}(\rot(P^n))), \dots,\mathfrak{w}(P_1),\rot(\mathfrak{w}(\rot(P^1))))\,,
\end{align*}
implicitly identifying partial permutations with their lacing diagrams. Note that this definition differs from the definition of the pipes to laces operation in  \cite[\S~2.9]{KKR}, as in \textit{loc. cit.} the authors only consider the case where each $P_k$ and $P^k$ is reduced. Our definition is the extension of theirs to the case of non-reduced pipe dreams, where a double crossing of pipes in a $P_k$ or $\rot(P^k)$ is simply ignored. 

\begin{remark}
    Our color conventions for pipes in pipe dreams follow those of \cite[\S4]{KKR}. These conventions ensure that pipes go to laces in the completed lacing diagram of the same color under the pipes to laces operation. 
\end{remark}

\begin{example}
The pipe dream $P_1$ in Figure \ref{fig:Pipes} corresponds, under pipes to laces, to the lacing diagram in Figure \ref{fig:laces}. It is reduced as it does not contain double crossings. The pipe dream $P_2$ is non-reduced (though each mini pipe dream extracted from $P_2$ is reduced), and corresponds to the second lacing diagram  in Figure \ref{fig:KTheoryLaces} under the pipes to laces operation.
\end{example}

We record here a few combinatorial results from \cite{KKR} which will be used in later sections.

\begin{proposition}\label{prop:followpipes}
If $P\in \Pipes(v_0,v(\Omega))$, then $\fc(\mathfrak{w}(P))= \pi(P)$ as elements of $S_{\bd}$.
\end{proposition}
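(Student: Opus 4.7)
Unpacking the definitions and using $\rot \circ \rot = \id$, the $\beta_k$ components of $\fc(\mathfrak{w}(P))$ and $\pi(P)$ are $c(\mathfrak{w}(P_k))$ and $\delta(P_k)$ respectively, while their $\alpha_k$ components are $c(\mathfrak{w}(\rot(P^k)))$ and $\delta(\rot(P^k))$ respectively. So the plan is to reduce to the following uniform claim about a single mini pipe dream: for any pipe dream $Q$ on a $k \times \ell$ grid,
\[
c(\mathfrak{w}(Q)) = \delta(Q).
\]

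First, suppose $Q$ is reduced. Extending $Q$ by elbow tiles yields, by Lemma \ref{lem:completedPipes}, a reduced pipe dream for $\delta(Q)$. The pipe-to-permutation correspondence \cite[Lemma 2.21]{KKR} then identifies the follow-the-pipes partial permutation $\mathfrak{w}(Q)$ with the northwest $k \times \ell$ block of the matrix of $\delta(Q)$. Since $\delta(Q)$ is the minimal length completion of this block, $\delta(Q) = c(\mathfrak{w}(Q))$.

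Next, suppose $Q$ is non-reduced. The definition of $\mathfrak{w}(Q)$ iteratively deletes a cross tile at each double crossing, proceeding northeast to southwest, and ultimately reads off follow-the-pipes on a reduced sub-pipe dream $Q''\subseteq Q$. I claim that each deletion step preserves the Demazure product, so that $\delta(Q'') = \delta(Q)$. Granting this, the already-established reduced case applied to $Q''$ gives $c(\mathfrak{w}(Q)) = c(\mathfrak{w}(Q'')) = \delta(Q'') = \delta(Q)$, completing the proof. To justify the claim, I would choose a reading order that brings the two cross tiles at a double crossing to adjacent positions in the word for $Q$. Commutation and braid relations, which preserve the Demazure product in the $0$-Hecke monoid, make this reordering possible; at the adjacent position the two letters must be the same simple transposition $\tau_m$ (they swap the same pair of pipes at the moment of crossing), and the $0$-Hecke identity $\tau_m \tau_m = \tau_m$ implements the tile removal at the level of Demazure products.

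The main obstacle I expect is the last step: carrying out the word reordering rigorously requires tracking how the simple-transposition index $i+j-1$ at a crossing evolves as one follows the two offending pipes between their two intersection points, and verifying that intermediate crossings with third pipes can always be commuted or braid-moved past. This is essentially a standard feature of the $0$-Hecke monoid action on pipe dreams, and I would either write it out by induction on the number of crossings between the two specified pipes, or extract it from results on non-reduced pipe dreams in the references cited in the paper. Once the single-removal lemma is in hand, the rest of the argument is routine bookkeeping over the mini pipe dreams $P_k$ and $\rot(P^k)$.
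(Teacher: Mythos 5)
Your reduction is sound: both $\fc(\fw(P))$ and $\pi(P)$ are assembled blockwise from the mini pipe dreams $P_k$ and $\rot(P^k)$, so the proposition does come down to the single claim $c(\fw(Q))=\delta(Q)$ for a pipe dream $Q$ on a $k\times\ell$ grid, and your treatment of the reduced case via Lemma \ref{lem:completedPipes} and the follow-the-pipes description of the permutation of a reduced pipe dream is correct. Note that the paper itself gives no argument here --- it defers to \cite[Proposition 4.15]{KKR} ``mutatis mutandis'' --- so there is no in-text proof to compare against, and you have in effect supplied more detail than the text does.

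The genuine gap is the deletion lemma in the non-reduced case, and it is not a small one: it is the entire content of the proposition beyond the reduced case. You need that if two pipes cross at tiles $c_1$ (southwest) and $c_2$ (northeast) with no crossing of that pair in between, then $\delta(Q)=\delta(Q\setminus\{c_2\})$. The statement is true, but the route you sketch --- reordering the word by commutations and braid moves until the two letters are adjacent --- is the most delicate way to get it, and you have not carried it out. The letters between the two crossings include crossings of third pipes with one of the two distinguished pipes; these need not commute with the letters you are tracking and need not admit a braid move, braid moves relabel letters (so ``the letter for $c_2$'' is only well defined as ``the crossing of the two distinguished wires,'' as you acknowledge), and in the $0$-Hecke monoid two equal-length words for the same element need not be connected by braid and commutation moves alone (e.g.\ $\tau_1\tau_1\tau_2$ and $\tau_1\tau_2\tau_2$ are both rigid), so no Matsumoto-type theorem rescues you; the reordering must be established by hand for pipe-dream words. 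A cleaner and standard route is to show that $\delta(Q)$ is computed by tracing the pipes with the rule that a pair which has already crossed bounces (acts as an elbow) at every subsequent mutual cross tile; this can be proved by induction on the tiles in reading order, and it gives $c(\fw(Q))=\delta(Q)$ directly, since monotonicity of pipes forces the crossings of a fixed pair to be totally ordered from southwest to northeast, so the bounce rule retains exactly the southwest crossing that $\fw$ retains. Either supply such an argument or cite it explicitly (the paper's own recourse is \cite[Proposition 4.15]{KKR}); as written, your proposal establishes only the reduced case.
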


\begin{proof}
    The proof of this proposition follows \emph{mutatis mutandis} from the proof of \cite[Proposition 4.15]{KKR}.
\end{proof}

\begin{corollary}\label{lem:partialperm}
If $P\in \Pipes(v_0, v(\Omega))$ and $\pi(P)=(v_n, v^n, \dots, v_1, v^1)$, then $v^k = \delta(\rot(P^k))$ and $v_k = \delta(P_k)$ are completions of partial permutation matrices.
\end{corollary}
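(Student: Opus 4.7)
The plan is to unwind the equality supplied by Proposition \ref{prop:followpipes}: for $P\in \Pipes(v_0, v(\Omega))$ we have $\fc(\fw(P)) = \pi(P)$ as elements of $S_{\bd}$. The left-hand side is, by construction, a tuple whose $k$-th entries are \emph{completions} of partial permutation matrices, while the right-hand side is exactly $(v_n, v^n, \dots, v_1, v^1)$ with $v_k = \delta(P_k)$ and $v^k = \delta(\rot(P^k))$. Matching the two descriptions coordinate-by-coordinate will give the claim.

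In detail, first I would recall that the pipes-to-laces operation produces the lacing diagram
\[
\fw(P) = \bigl(\fw(P_n),\ \rot(\fw(\rot(P^n))),\ \dots,\ \fw(P_1),\ \rot(\fw(\rot(P^1)))\bigr),
\]
whose entries are, by definition, partial permutation matrices of the appropriate sizes (rectangular blocks indexed by $\beta_k$ and $\alpha_k$). Writing $\fw(P) = (w_n, w^n, \dots, w_1, w^1)$, the definition of the completion map $\fc$ in \eqref{eqn:permOperator} gives
\[
\fc(\fw(P)) \;=\; \bigl(c(w_n),\ c(\rot(w^n)),\ \dots,\ c(w_1),\ c(\rot(w^1))\bigr).
\]
Because $\rot\circ\rot$ is the identity, the $\alpha_k$-coordinate simplifies to $c(\rot(w^k)) = c(\fw(\rot(P^k)))$, while the $\beta_k$-coordinate is $c(w_k) = c(\fw(P_k))$.

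Now I invoke Proposition \ref{prop:followpipes} to compare the two tuples coordinate-wise: this yields
\[
v_k \;=\; \delta(P_k) \;=\; c(\fw(P_k)) \qquad\text{and}\qquad v^k \;=\; \delta(\rot(P^k)) \;=\; c(\fw(\rot(P^k))),
\]
exhibiting each of $v_k$ and $v^k$ as the completion of a partial permutation matrix, which is exactly the statement of the corollary. There is no real obstacle here; the only thing to keep track of is the $\rot$ bookkeeping between left- and right-pointing arrows, which cancels cleanly because $\rot$ is an involution. This is why the corollary is essentially a direct reading of Proposition \ref{prop:followpipes}.
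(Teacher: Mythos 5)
Your proof is correct and is essentially the paper's own argument: both simply invoke Proposition \ref{prop:followpipes} and read off coordinate-by-coordinate that $v_k = c(\fw(P_k))$ and $v^k = c(\fw(\rot(P^k)))$. Your explicit tracking of the $\rot\circ\rot = \mathrm{id}$ cancellation is a correct (and slightly more detailed) rendering of what the paper treats as immediate.
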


\begin{proof}
    This is immediate, since by Proposition \ref{prop:followpipes}, $v_k$ is the completion of the partial permutation $\fw(P_k)$ and $v^k$ is the completion of the partial permutation $\fw(\rot (P^k))$.
\end{proof}

\begin{remark}
    For reduced pipe dreams $P \in \RPipes(v_0, v(\Omega))$ we will prove in Proposition \ref{prop:wPminLacingDiagram} that $\fw(P) \in W(\Omega)$. In particular, $\fw(P)$ is a lacing diagram in the open orbit $\Omega^\circ$ (see also \cite[Theorem 4.16]{KKR}). For non-reduced pipe dreams, we will prove in Proposition \ref{prop:lacesBijection} that $\fw(P) \in KW(\Omega)$. Note however, that $K$-theoretic lacing diagrams $\bw \in KW(\Omega)$ \emph{need not lie in the open orbit} $\Omega^\circ$. That is, the number of laces connecting any two columns in $\bw \in KW(\Omega)$ may differ from the number of laces between those columns in a minimal lacing diagram $\bw' \in W(\Omega)$. For an example of this phenomenon, see \cite[Figure 4]{KinserICRA}.
\end{remark}

\subsection{Schubert and Grothendieck polynomials in terms of pipe dreams}\label{sect:Alpha}

Our formulas for $K$-polynomials and multidegrees of type-$A$ quiver loci involve double \textit{Grothendieck polynomials} and \textit{Schubert polynomials}. These polynomials, denoted $\mathfrak{G}_v(\mathbf{a}; \mathbf{b})$ and $\mathfrak{S}_v(\mathbf{a};\mathbf{b})$ respectively, are defined with respect to a permutation $v \in S_m$ and two alphabets $\mathbf{a}=a_1, \dots, a_m$ and $\mathbf{b}=b_1, \dots, b_m$. The original definitions of these polynomials were recursive, and can be found in \cite{FultonLascoux}. The most useful characterization of these polynomials for our purposes is in terms of pipe dreams: let $P \in \Pipes(v)$ be a pipe dream for a permutation $v \in S_{m}$. Letting the alphabets $\mathbf{a}=a_1, \dots, a_m$ and $ \mathbf{b}=b_1, \dots , b_m$ index the rows and columns of $P$ respectively, we define 
\begin{equation}
(\mathbf{a}-\mathbf{b})^P=\prod_{(i,j) \in P} (a_i-b_j), 
\qquad \left(1-\frac{\mathbf{a}}{\mathbf{b}}\right)^P= \prod_{(i,j) \in P} \left( 1 - \frac{a_i}{b_j} \right).
\end{equation}

The following formulas are a special case of those presented in \cite[Theorem 4.5]{WooYong}, and previously appeared in different forms in \cite{AJS1994,Billey1999,GK2015,Willems2006}.

\begin{proposition}\label{prop:pipedreamGrothendieck}
For a permutation $v \in S_m$ and alphabets $\mathbf{a}=a_1, \dots, a_m, \mathbf{b}=b_1, \dots, b_m$, the following formulas hold:
\begin{align*}
\mathfrak{G}_v(\mathbf{a}; \mathbf{b})&=\sum_{P \in \Pipes(v)} (-1)^{|P|-\ell(v)} \left( 1- \frac{\mathbf{a}}{\mathbf{b}} \right)^{P} \\
\mathfrak{S}_v(\mathbf{a};\mathbf{b})&=\sum_{P \in \RPipes(v)} \left(\mathbf{a} -\mathbf{b}\right)^P.
\end{align*}
\end{proposition}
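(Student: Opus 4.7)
The plan is to establish both formulas by induction on the codimension $\binom{m}{2} - \ell(v)$, using the recursive characterizations of $\mathfrak{S}_v$ and $\mathfrak{G}_v$ via (isobaric) divided differences given in \cite{FultonLascoux}. Since the statement is essentially a translation of \cite[Theorem 4.5]{WooYong} into our conventions, the bulk of my work would consist of verifying that the indexing of alphabets, the orientation of pipe dreams, and the convention for reading off a Demazure word here all match those in \textit{loc.\ cit.}

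For the base case I would take $v = w_0 \in S_m$, noting that $\RPipes(w_0) = \Pipes(w_0)$ consists of the unique pipe dream with cross tiles at every position $(i,j)$ with $i + j \leq m$. A direct computation shows that the right-hand sides of both formulas evaluate to the expected top polynomials $\prod_{i+j \leq m}(a_i - b_j)$ and $\prod_{i + j \leq m}(1 - a_i/b_j)$, respectively. For the inductive step, fix a simple transposition $\tau_i$ with $\ell(v\tau_i) = \ell(v) - 1$, and apply the defining recursions $\partial_i^{(\mathbf{a})} \mathfrak{S}_v = \mathfrak{S}_{v\tau_i}$ and the analogous isobaric divided difference recursion for $\mathfrak{G}_v$. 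The key combinatorial step is to show that the right-hand sides of the two displayed formulas satisfy these same recursions; this is achieved via the Bergeron--Billey ``ladder/chute move'' bijection relating (reduced) pipe dreams for $v\tau_i$ to an appropriately chosen subset of those for $v$, combined with the Leibniz rule applied to each summand.

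I expect the main obstacle to be the sign bookkeeping in the Grothendieck case: one must check that non-reduced pipe dreams for $v$ transform consistently under the isobaric recursion, so that the exponent $|P| - \ell(v)$ of $-1$ is tracked correctly as cross tiles are added, removed, or shifted by the bijection, and one must also handle boundary effects near the southeast corner of the grid. In practice, rather than carrying out this induction in full, I would simply invoke \cite[Theorem 4.5]{WooYong} (or one of the earlier formulations in \cite{AJS1994, Billey1999, GK2015, Willems2006}) after confirming that their row/column labeling conventions and their convention for extracting the Demazure product from a pipe dream agree with ours.
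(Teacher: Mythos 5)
Your fallback plan is exactly what the paper does: Proposition \ref{prop:pipedreamGrothendieck} is not proved in the paper but is quoted as a special case of \cite[Theorem 4.5]{WooYong} (with earlier versions in \cite{AJS1994,Billey1999,GK2015,Willems2006}), so invoking that result after matching conventions is the intended argument. The inductive sketch via divided differences is a standard alternative route but is not needed here, and the paper does not carry it out.
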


If $w$ is a $k \times \ell$ partial permutation matrix, then we define $\fG_w(\ba;\bb):=\fG_{c(w)}(\ba; \bb)$ and $\fS_w(\ba;\bb):=\fS_{c(w)}(\ba;\bb)$ for alphabets $\ba= a_1, \dots, a_{k + \ell}$ and $\bb=b_1, \dots, b_{k + \ell}$. By Lemma \ref{lem:completedPipes} we then have 
\begin{align*}
    \mathfrak{G}_w(\mathbf{a}; \mathbf{b})&=\sum_{P \in \Pipes(w)} (-1)^{|P|-\ell(c(w))} \left( 1- \frac{\mathbf{a}}{\mathbf{b}} \right)^{P} \\
\mathfrak{S}_w(\mathbf{a};\mathbf{b})&=\sum_{P \in \RPipes(w)} \left(\mathbf{a} -\mathbf{b}\right)^{P}\,,
\end{align*}
where the sums are taken over $k \times \ell$ pipe dreams for $w$. Note that by Lemma \ref{lem:completedPipes}, only the variables $a_1, \dots, a_k$ and $b_1, \dots, b_\ell$ will appear in these expressions.

The following notation will be used in our proofs of the cohomological and $K$-theoretic component formulas (Theorem \ref{thm:3.9} and Theorem \ref{thm:4.16}). To a lacing diagram $\bw=(w_n, w^n, \dots, w_1, w^1)$ we assign the following polynomials: 
\begin{align*}
    \fS_{\bw}(\bt; \bs) &= \left( \prod_{k=1}^n \fS_{w_k}(\bt^k;\bs^k) \right) \cdot \left( \prod_{k=1}^n \fS_{\rot(w^k)}(\tilde{\bt}^{k-1}; \tilde{\bs}^k) \right) \\
    \fG_{\bw}(\bt; \bs) &= \left( \prod_{k=1}^n \fG_{w_k}(\bt^k;\bs^k) \right) \cdot \left( \prod_{k=1}^n \fG_{\rot(w^k)}(\tilde{\bt}^{k-1}; \tilde{\bs}^k) \right) \, , \\
\end{align*}
 where if $\ba=a_1, \dots, a_m$ is an alphabet, we denote the reverse alphabet $a_m, \dots, a_1$ by $\tilde{\ba}$. Recall that the alphabets $\bt^k$ and $\bs^k$ are defined in the introduction.

\section{The bipartite codimension formula and the bipartite pipe formulas}\label{sect:codim}

In this section we give a combinatorial proof of Buch and Rim\'anyi's formula for the codimension of a bipartite type $A$ quiver locus $\Omega$, in its representation space $\rep_Q(\bd)$, in terms of number of crossings in the extended diagram of any $\bw \in W(\Omega)$ (see \cite[Corollary 2]{BuchRim}): \[\codim (\Omega)=|\bw|, \quad \bw \in W(\Omega).\] 
We note that Buch and Rim\'anyi's formula holds for arbitrarily oriented type $A$ quivers, while our proof is specific to the bipartite setting. We also remark that as a consequence of the results needed to prove the codimension formula, we obtain a combinatorial proof of the bipartite pipe formula first proven in \cite[Theorem 3.7]{KKR} (see Theorem \ref{thm:pipeformulas} below).

Buch and Rim\'anyi's codimension formula was a corollary of their cohomological component formula, which was proven via geometric methods. Because one of the main goals of this paper is to give a new combinatorial proof of the cohomological component formula, we require an independent combinatorial proof of this codimension formula. Our first step in this direction is to prove Proposition \ref{prop:wPminLacingDiagram} below. While this result was also proven in \cite{KKR}, their proof made use of \cite[Corollary 2]{BuchRim}. Hence we provide a new, independent proof.

We need the following lemma which was proven in \cite{KKR} by a Gr\"obner degeneration argument. Here we give an alternate combinatorial proof. Recall that $P_*$ is the pipe dream on a $d\times d$ grid defined by $(i,j) \in P_*$ if and only if $(i,j)$ is simultaneously inside the northwest justified $d_y\times d_x$ subgrid and is \emph{outside} of the snake region (see Figure \ref{fig:Pipes}). Recall that $v_*$ denotes the Zelevinsky permutation associated to the entire representation space $\text{$\rep$}_Q(\mathbf{d})$.

\begin{lemma}\cite[Lemma 3.5]{KKR}\label{lemma:3.2}
Each $P \in \Pipes(v_0, v(\Omega))$ contains $P_*$ as a subset. 
\end{lemma}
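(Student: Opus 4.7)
My plan is to show that for each anti-snake position $(i,j)$ in the NW subgrid and each $P \in \Pipes(v_0, v(\Omega))$, the tile at $(i,j)$ must be a cross. The main tools are the monotonicity of pipes (at any tile, pipes travel only east or north) and the explicit block structure of $v(\Omega)$ given by (Z1)--(Z3). I would split into two cases based on whether $(i,j)$ is in a block $(y_a,x_b)$ with $a < b-1$ (above the snake) or $a > b$ (below the snake).

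For the above-snake case, I would first observe using (Z1)--(Z3) that the $1$s in any row block $y_{a'}$ with $a' \leq a$ lie in column blocks $x_{a'+1}, x_{a'}, \ldots, x_1$ or in $y$-column blocks, all of which sit strictly to the right of column $j$ since $j$ lies in block $x_b$ with $b \geq a+2$. A symmetric argument handles the column side, giving $v(\Omega)(i') > j$ for all $i' \leq i$ and $v(\Omega)^{-1}(j') > i$ for all $j' \leq j$. Now for each $j' \leq j$, the pipe exiting the north wall at column $j'$ enters the grid at row $v(\Omega)^{-1}(j') > i$ and by monotonicity crosses the horizontal boundary between rows $i$ and $i+1$ going north at some column $c_{j'} \leq j'$. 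The crossings occupy distinct columns (at most one northward crossing per column), so an inductive pigeonhole yields $c_{j'} = j'$ for every $j' \leq j$. In particular the pipe exiting column $j$ runs straight up column $j$ from the row $i\mid i+1$ boundary, which forces $(i,j)$ to be a cross (allowing S\,$\to$\,N); an elbow at $(i,j)$ would turn this pipe east and force it to exit the north wall at a column $> j$, contradicting the prescribed exit column.

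For the below-snake case, the rank condition $r_{v(\Omega)}(i,j)=0$ can fail, so a modified argument is needed. Here I would use a ``forcing pipe'' strategy: by (Z2), the SE block $(x_a,y_a)$ of $v(\Omega)$ contains $1$s coming from the $\beta_a$-arrow in a lacing diagram, and each such $1$ at position $(r,c)$ corresponds to a pipe entering the NW subgrid at its south boundary (via a SW staircase from the west entry at row $r > d_y$) and exiting at its east boundary (into a NE staircase toward $c > d_x$). Such south-to-east pipes must traverse specific corridors of the NW subgrid. I would run a conservation/monotonicity analysis on the rectangle $\{(i',j'):i'\geq i,\ j'\leq j-1\}$, together with case analysis of the tile at $(i,j)$, to show that an elbow at $(i,j)$ creates a routing conflict: the south-entry pipe would be forced to turn east at a cell which the west-entry pipe of row $i$ must simultaneously traverse going east, contradicting the tile types available.

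The main obstacle is the below-snake case. The clean rank-zero argument that works above the snake does not apply, and the forcing pipes' exact paths depend on the dimension vector and the specific orbit. I expect that making this step rigorous will require a careful identification of the relevant ``forcing'' pipes from the (Z2) blocks $(x_a,y_a)$ and the (Z1) southeast blocks, together with a monotonicity-driven elimination of each non-cross tile assignment along the pipe's path from its south entry to its east exit.
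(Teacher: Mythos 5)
Your above-the-snake argument is close in spirit to a correct proof, but it has a soundness issue: you identify the entry row of ``the pipe exiting the north wall at column $j'$'' with $v(\Omega)^{-1}(j')$. This follow-the-pipes reading of the permutation is only valid for \emph{reduced} pipe dreams, whereas $\Pipes(v_0,v(\Omega))$ contains non-reduced ones, for which $\delta(P)$ is a Demazure product and is not computed by tracing pipes (e.g.\ crosses at $(1,2)$ and $(2,1)$ on a $2\times 2$ grid have $\delta(P)=\tau_2$ but the traced pipes give the identity). The paper avoids this by a different device: choose an elbow tile that is \emph{extremal} (no elbow of the forbidden region weakly northwest of it), so that the offending pipe travels straight east through crosses, turns once, and travels straight north through crosses; along such a path each pair of pipes meets at most once, so the Demazure product really does send row $i$ to column $j$, contradicting (\textit{Z1})--(\textit{Z2}). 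Your version could be repaired either by this extremal-tile trick or by first reducing to reduced pipe dreams via the fact (\cite[Lemma 2]{Miller}, quoted in the paper) that every $P\in\Pipes(v)$ is a union of elements of $\RPipes(v)$; as written, though, the step is unjustified.

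The more serious gap is the below-the-snake case, which you yourself flag as unresolved: the ``forcing pipe'' and ``conservation/routing conflict'' analysis is only a hoped-for strategy, not an argument, and it is not clear it can be carried out. The idea you are missing is that one should not stay inside the northwest $d_y\times d_x$ subgrid at all. View $P$ as a pipe dream on the full $d\times d$ grid (all tiles outside the northwest subgrid are elbows) and again pick an extremal -- now maximally \emph{southeast} -- elbow in the forbidden region, say in block row $y_\ell$ and block column $x_k$ with $\ell>k$. The pipe through that elbow enters the northwest subgrid from the south in block column $x_k$ and exits it to the east in block row $y_\ell$; on the full grid it therefore starts on the west wall in block row $x_k$ and ends on the north wall in block column $y_\ell$, producing a $1$ of $v(\Omega)$ in block $(x_k,y_\ell)$. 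Since $x_k$ is neither to the left of $y_\ell$ nor exactly one vertex to its right when $\ell>k$, this contradicts (\textit{Z1})--(\textit{Z2}). This is exactly symmetric to the first case and requires no rank or conservation analysis; without something like it, your proposal does not prove the lemma.
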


\begin{proof}
    Let $P\in \Pipes(v_0,v(\Omega))$ and suppose that $P$ has at least one elbow tile strictly northwest of the snake region. Then there is some maximally northwest such elbow tile in this region, say in location $(i,j)$. Thus, there is a pipe in $P$ which enters at the left of the $d_y\times d_x$ grid in row $i$ and exits at the top of the $d_y\times d_x$ grid in column $j$. This means that $v(\Omega)$ has a $1$ in location $(i,j)$. But this is impossible by properties (\textit{Z1}) and (\textit{Z2}) of the Zelevinsky permutation: there are no $1$s in $v(\Omega)$ that are simultaneously in the northwest $d_y\times d_x$ submatrix of $v(\Omega)$ and strictly northwest of the snake region.    

Similarly, suppose that $P$ has at least one elbow tile that is simultaneously strictly southeast of the snake region and contained in the northwest $d_y\times d_x$ subgrid. Then, there is a maximally southeast such elbow tile in this region, say in row $a$ and column $b$. Thus, there is a pipe in $P$ which enters at the bottom of the northwest $d_y\times d_x$ subgrid in column $b$ and exits at the right of the northwest $d_y\times d_x$ subgrid in row $a$. Column $b$ appears in a block column labeled by $x_k$ and row $a$ appears in a block row labeled by $y_\ell$, where $\ell>k$. Thus, $P$ has a pipe which starts at the left of the $d\times d$ grid in block row $x_k$ and ends at the top of the $d\times d$ grid in block column labeled by $y_\ell$, $\ell>k$. Thus, $v(\Omega)$ has a $1$ in the intersection of block row $x_k$ and block column $y_\ell$. But, this violates properties (\textit{Z1}) and (\textit{Z2}) of the Zelevinsky permutation since vertex $x_k$ neither appears to the left of vertex $y_\ell$, nor does vertex $x_k$ appear exactly one vertex to the right of vertex $y_\ell$ (since $\ell>k$). This completes the proof that each $P\in \Pipes(v_0,v(\Omega))$ contains $P_*$ as a subset.
\end{proof}

The following proposition is the key combinatorial result needed to prove the codimension formula.

\begin{proposition}\label{prop:wPminLacingDiagram}\cite[Proposition 4.16]{KKR} \label{prop:PtoL}
If $P \in \RPipes(v_0, v(\Omega))$, then $\mathfrak{w}(P) \in W(\Omega)$. 
\end{proposition}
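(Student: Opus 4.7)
I would verify both $(a)$ $\bw := \fw(P) \in \Omega^\circ$ and $(b)$ $\bw$ attains the minimum length among diagrams in $\Omega^\circ$. The first step is to compute $|\bw|$. Reducedness of $P$ passes to every mini pipe dream: since no two pipes of $P$ cross more than once, neither do their restrictions to any subrectangle, so each $P_k$ and $\rot(P^k)$ is itself reduced. Writing $\pi(P) = (v_n, v^n, \dots, v_1, v^1)$, Proposition~\ref{prop:followpipes} gives $v_k = \delta(P_k)$ and $v^k = \delta(\rot(P^k))$; reducedness then yields $|P_k| = \ell(v_k)$ and $|P^k| = \ell(v^k)$. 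Combining with $|P| = \ell(v(\Omega))$ and $P_* \subseteq P$ from Lemma~\ref{lemma:3.2},
\[
|\bw| = \sum_{k=1}^n \bigl(\ell(v_k) + \ell(v^k)\bigr) = \sum_{k=1}^n \bigl(|P_k| + |P^k|\bigr) = |P| - |P_*| = \ell(v(\Omega)) - |P_*|,
\]
so $|\bw|$ is a combinatorial invariant of $\Omega$, independent of the chosen reduced $P$.

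To check $\bw \in \Omega^\circ$ I would compare lace counts using (Z1)--(Z2): the number of laces from $z_i$ to $z_j$ required of any diagram in $\Omega^\circ$ equals the number of $1$s of $v(\Omega)$ in block $(z_i, z_j)$, which is exactly the number of pipes of $P$ entering in block row $z_i$ and exiting in block column $z_j$. The plan is to follow each such pipe through the snake region and argue that the arrows it produces in the mini pipe dreams it traverses concatenate into precisely one lace from $z_i$ to $z_j$ in $\bw$. The inclusion $P_* \subseteq P$ (forcing straight-line travel outside the snake) together with the NW-to-SE ordering from (Z3) constrain the pipe routings enough to make this correspondence bijective.

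For minimality, given any $\bw' = (w'_n, (w')^n, \dots, w'_1, (w')^1) \in \Omega^\circ$ I would construct a (possibly non-reduced) pipe dream $P(\bw') \in \Pipes(v_0, v(\Omega))$ by filling the complement of the snake region with $P_*$, placing a reduced pipe dream for $w'_k$ inside each $\beta_k$-block, and the $180^\circ$ rotation of a reduced pipe dream for $\rot((w')^k)$ inside each $\alpha_k$-block. By construction $|P(\bw')| = |P_*| + |\bw'|$, and a pipe-tracking argument analogous to the one above establishes $\delta(P(\bw')) = v(\Omega)$. Therefore $|P(\bw')| \geq \ell(v(\Omega))$, which rearranges to $|\bw'| \geq \ell(v(\Omega)) - |P_*| = |\bw|$, showing that $\bw$ is minimal.

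The main obstacle is the two pipe-tracking arguments: verifying that pipes of $P$ weave through the snake region in exactly the pattern dictated by the block data (Z1)--(Z3) of $v(\Omega)$, so that they encode the lace structure of $\bw$, and dually checking that the reassembled pipe dream $P(\bw')$ realizes $v(\Omega)$ as its Demazure product. Both directions rely on a careful case analysis of where pipes can enter and exit each $\alpha_k$- and $\beta_k$-block, using the rigidity that $P_*$ imposes outside the snake region together with the zigzag geometry of the snake itself.
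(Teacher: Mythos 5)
Your computation of $|\fw(P)|$ and your outline for showing $\fw(P)\in\Omega^\circ$ are reasonable and consistent with how the paper handles orbit membership (it cites the combinatorial first half of the KKR argument for exactly this). The gap is in the minimality step. You assemble, for an arbitrary $\bw'\in\Omega^\circ$, a pipe dream $P(\bw')$ from reduced pipe dreams for the individual partial permutations $w'_k$, and you assert that $\delta(P(\bw'))=v(\Omega)$. This is false. By the column-by-column factorization of the Demazure product (the one used in Lemma \ref{prop:goodPipes}), $\delta(P(\bw'))$ is determined by the completions $c(w'_k)$ themselves, not merely by the lace counts that determine the orbit, and a non-minimal $\bw'$ has genuinely different completions. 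Concretely, let $\bd(y_1)=\bd(x_1)=2$ and compare the two rank-one partial permutations in the $\beta_1$ block: the one with its $1$ in position $(1,1)$ completes to $\tau_2$ (length $1$), while the one with its $1$ in position $(1,2)$ lies in the same orbit but completes to $\tau_1\tau_2$ (length $2$). A reduced pipe dream for the latter placed in the $\beta_1$ block produces a pipe dream whose Demazure product is a different permutation from $v(\Omega)$ (e.g.\ on a bare $2\times 2$ block, crosses at $(1,1)$ and $(2,1)$ give $\tau_1\tau_2$, not $\tau_2$), so $P(\bw')\notin\Pipes(v_0,v(\Omega))$ and your inequality $|P(\bw')|\ge\ell(v(\Omega))$ has no justification. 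What you would actually need is $\ell(\delta(P(\bw')))\ge\ell(v(\Omega))$ for every $\bw'\in\Omega^\circ$; after unwinding $|P(\bw')|=|P_*|+|\bw'|$, that inequality is precisely the hard half of the codimension formula (Corollary \ref{cor:codim}) which this proposition is being used to prove, so no pipe-tracking argument of the kind you describe will deliver it without circularity.

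The paper sidesteps this by invoking the Buch--Rim\'anyi combinatorial characterization of minimal lacing diagrams (the remarks after their Lemma 1): $\bw$ is minimal if and only if (i) no two strands of its extended diagram cross twice, and (ii) no two strands sharing a left or right endpoint column cross in the extended diagram. Both conditions are then checked locally for $\fw(P)$: crossings of extended laces are shown to correspond to crossings of pipes in $P$, so (i) follows from reducedness of $P$, and a violation of (ii) would force a configuration of $1$s in $\delta(P)=v(\Omega)$ contradicting the northwest-to-southeast arrangement within block rows and columns. You should replace your comparison-with-arbitrary-$\bw'$ argument by a verification of this local criterion.
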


\begin{proof}
By the first part of the proof of \cite[Proposition 4.16]{KKR}, $\mathfrak{w}(P)$ is a lacing diagram in the orbit $\Omega^\circ$. Note that this part of the proof is combinatorial, and does not make use of the codimension formula. It remains to show that $\mathfrak{w}(P)$ is a \emph{minimal} lacing diagram. Let $\bw = \mathfrak{w}(P)$.

By the remarks after the proof of \cite[Lemma 1]{BuchRim}, showing that $\bw$ is minimal is equivalent to showing that $\bw$ satisfies the following two conditions:
    \begin{enumerate}
        \item no strands in the extended lacing diagram cross twice; and 
        \item if two strands in $\bw$ start or end in the same column, then the corresponding strands in $\bw$'s extended diagram do not cross. 
    \end{enumerate}
    We will first prove point (1). Note that no two strands can cross twice in $\bw$, as this would mean that a pair of pipes in $P$ crossed twice in the snake region, and $P$ would not be reduced. We must therefore show that strands do not cross twice, even in the extended diagram. To do this, we will show that crossings in the extended diagram correspond to crossings of pipes in the snake region of $P$. 
    
    So, suppose that there is a crossing in the extended diagram from $y_i$ to $x_i$ (i.e. this is a crossing of arrows which point to the right). By Proposition \ref{prop:followpipes} we have that $\pi(P)=\fc(\bw) \in S_{\bd}$. Therefore $\delta(P_i)=c(w_i) \in S_{\bd(x_i)+\bd(y_i)}$. (Recall that $P_i$ is the mini pipe dream on the $\bd(y_i)\times \bd(x_i)$ grid extracted from $P$ by restricting to the block of the snake region indexed by $\beta_i$.) Let $\tilde{P}_i$ denote the pipe dream which is the extension-by-elbow-tiles of $P_i$ to a $\bd(x_i) + \bd(y_i)$ square grid. Let $\tilde{w}_i=\fw(\tilde{P}_i)$ be the permutation obtained by pipes to laces. Then we have that
\begin{align*}
    \tilde{w}_i= \delta(\tilde{P}_i) = \delta(P_i) = c(w_i),
\end{align*}
that is, we can determine $c(w_i)$ by following the pipes in the pipe dream $\tilde{P}_i$. It follows that a crossing of right pointing arrows in $c(w_i)$ which  appears in the extended lacing diagram appears in $\tilde{P}_i$ as a crossing of pipes. However, by Lemma \ref{lem:completedPipes}, this means that these pipes crossed in $P_i$ as well. A similar argument shows that a crossing of left pointing arrows in the completed lacing diagram will appear in $P$ as a crossing of pipes. We have therefore proven that a crossing of laces in the extended lacing diagram corresponds to a crossing of the corresponding pipes in $P$. Since $P$ is reduced, it follows that no two laces can cross twice, even in the extended diagram. 

To prove point (2), let $\ell_1$ and $\ell_2$ be laces which both have their left endpoints indexed by vertex $x_i \in Q_0$. The laces $\ell_1$ and $\ell_2$ correspond to pipes in $P$ which enter the snake region through the south of block $\beta_i$. Therefore, since $P_\ast\subseteq P$ (see Lemma \ref{lemma:3.2}), these pipes enter $P$ at the left boundary of the $d\times d$ grid in block row $x_i$. 
Now assume that $\ell_1$ and $\ell_2$ cross in $\bw$, so that the corresponding pipes cross in $P$. 
Then this introduces a configuration of $1$s in $\delta(P) = v(\Omega)$ which do not run northwest to southeast in block row $x_i$. As no such configuration of $1$s exists in $v(\Omega)$, we conclude that the laces $\ell_1$ and $\ell_2$ did not cross. Similarly, laces which start at a vertex $y_i$ cannot cross.

The only remaining case to consider is when $\ell_1$ and $\ell_2$ do not cross in $\bw$, but potentially cross in the extended lacing diagram. These laces may not cross to the left of vertex $x_i$ in the extended diagram, as this would violate the fact that $\bw$'s completed diagram is the minimal length extension of $\bw$. Similarly, if $\ell_1$ and $\ell_2$ end at the same vertex, they may not cross to the right of this vertex, and this would contradict minimality as well. We are left considering the case in which $\ell_1$ ends at vertex $z_1$ and $\ell_2$ ends at vertex $z_2$, say with $z_2$ to the right of $z_1$ in $Q$. If the corresponding laces in the extended diagram crossed, then at this crossing, $\ell_2$ is non-virtual. It follows that this crossing would appear in $P$, and we conclude as before that $\delta(P)$ would violate the conditions of the Zelevinsky permutation. 

Finally, let $m_1$ and $m_2$ be laces which both have their right endpoints indexed by vertex $z \in Q_0$. By an analogous argument as given above, if $m_1$ and $m_2$ were to cross, this would introduce a configuration of $1$s in $v(\Omega)$ which do not run northwest to southeast in the block column labeled by $z$. Again, no such configuration of $1$s exists in $v(\Omega)$, and so these laces did not cross. The cases in which the laces $m_1$ and $m_2$ do not cross in $\bw$ but potentially cross in the extended diagram are ruled out by a similar analysis to the case considered above, where $\ell_1$ and $\ell_2$ stated at the same vertex in $Q$.
This completes the proof of point 2, and thus the proof of the proposition.
\end{proof}

\begin{lemma}\cite[Lemma 3.5]{KKR}\label{lemma:pipeformula}
Let $v_\ast$ denote the Zelevinsky permutation of the bipartite type $A$ representation space $\rep_Q(\bd)$. Then, $\Pipes(v_0, v_*)=\{P_*\}$. 
\end{lemma}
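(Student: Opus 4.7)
The plan is to establish the lemma in two parts: first, to show that $P_*$ itself is a pipe dream for $v_*$ (and that it is reduced); second, to derive uniqueness from Lemma~\ref{lemma:3.2} together with the monotonicity of the Demazure product.

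For the first part, I would compute $\delta(P_*)$ directly by pipe tracing. The pipe dream $P_*$ has cross tiles precisely at positions $(i,j)$ of the $d_y\times d_x$ subgrid lying outside the staircase-shaped snake region, and elbow tiles everywhere else in the $d \times d$ grid. A pipe entering the west boundary at row $r$ in block row $y_m$ travels east through cross tiles, enters the snake region at its first elbow, zig-zags northeast along the staircase, and eventually exits either through the top of the subgrid or via its east boundary (after which it continues zig-zagging through elbows outside the subgrid to exit at the top of the $d\times d$ grid). A careful case analysis based on $r$ and $m$ determines each pipe's exit column, and comparing these with the positions of the $1$s prescribed by (Z1)--(Z3) for the minimal lacing diagram of $\rep_Q(\bd)$ confirms $\delta(P_*) = v_*$. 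Moreover, since each pipe in $P_*$ moves only to the north or east, any pair of pipes crosses at most once, so $P_*$ is reduced with $|P_*| = \ell(v_*)$.

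For uniqueness, let $P\in \Pipes(v_0,v_*)$; by Lemma~\ref{lemma:3.2}, $P_*\subseteq P$. Assume toward contradiction that $P\supsetneq P_*$ and pick $(i_0,j_0)\in P\setminus P_*$, which necessarily lies in the snake region. Let $p_1$ (W-to-N) and $p_2$ (S-to-E) be the two pipes passing through the elbow at $(i_0,j_0)$ in $P_*$. The monotonic NE movement of pipes in $P_*$ implies that $p_1$ and $p_2$ do not cross anywhere in $P_*$: their paths may share elbow tiles along the staircase zig-zag but cannot coincide at a cross tile, since such a crossing would require one pipe to be moving east and the other north at a cell outside the snake, which is incompatible with the NE-monotone dynamics. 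Now set $\tilde{P} := P_*\cup\{(i_0,j_0)\}$. The new cross is the unique crossing of $p_1$ and $p_2$ in $\tilde{P}$, so under $\delta$ these two pipes swap their exit columns $v_*(r_1)$ and $v_*(r_2)$ (where $r_1, r_2$ denote their entry rows). Since $r_1 \neq r_2$ forces $v_*(r_1) \neq v_*(r_2)$, this gives $\delta(\tilde{P}) \neq v_*$. Combined with monotonicity of the Demazure product under pipe dream inclusion, which gives $\delta(\tilde{P}) \geq v_*$ in Bruhat order, we conclude $\delta(\tilde{P}) > v_*$. A second application of monotonicity yields $\delta(P) \geq \delta(\tilde{P}) > v_*$, contradicting $\delta(P) = v_*$. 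Therefore $P = P_*$.

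The principal obstacle is the pipe-trace verification in the first part, requiring a careful case analysis through the staircase snake region to match exit columns with the Zelevinsky-prescribed $1$s of $v_*$. The uniqueness argument is then a clean application of the monotonic NE structure of pipes in $P_*$ combined with the standard monotonicity of the Demazure product.
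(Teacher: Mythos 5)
Your outer strategy is genuinely different from the paper's: the paper sends a reduced pipe dream $P\in\RPipes(v_0,v_*)$ to the lacing diagram $\fw(P)$, invokes Proposition \ref{prop:PtoL} together with the fact that $W(\rep_Q(\bd))$ is a singleton, and then passes from $\RPipes$ to $\Pipes$ via Miller's result that every pipe dream is a union of reduced ones; you instead argue directly from monotonicity of the Demazure product under adding cross tiles. The skeleton of your uniqueness step (if $\delta(\tilde P)\neq v_*$ then $\delta(P)\geq\delta(\tilde P)>v_*$, contradicting $\delta(P)=v_*$) is sound. But two of its supporting claims are not actually established.

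First, your justification that $p_1$ and $p_2$ do not cross anywhere in $P_*$ is vacuous: you say a crossing ``would require one pipe to be moving east and the other north at a cell outside the snake, which is incompatible with the NE-monotone dynamics,'' but \emph{every} crossing of \emph{every} pair of pipes in \emph{any} pipe dream has exactly this local form (one pipe horizontal, one vertical, at a cross tile), and it is entirely compatible with each pipe individually moving only north and east. The claim is true, but only because of the explicit staircase structure of $P_*$: the two pipes through a common elbow of the snake region bounce off each other repeatedly and stay on opposite sides of the staircase, so they never share a cross tile. That has to be proved by tracing the pipes, not asserted. Second, the inference ``the new cross is the unique crossing of $p_1$ and $p_2$ in $\tilde P$, so under $\delta$ these two pipes swap their exit columns'' reads $\delta(\tilde P)$ off by following pipes, which is legitimate only if $\tilde P$ is reduced. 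Adding the cross at $(i_0,j_0)$ swaps the downstream trajectories of $p_1$ and $p_2$, so a third pipe $q$ that crossed $p_1$ upstream of $(i_0,j_0)$ and crossed $p_2$ downstream of it would cross the new $p_1$ twice in $\tilde P$; you need to rule this configuration out before you may compute $\delta(\tilde P)$ this way (once $\tilde P$ is known to be reduced, it is cleaner to conclude $\delta(\tilde P)\neq v_*$ from $|\tilde P|=\ell(v_*)+1$). Both of these points, as well as the base claim $\delta(P_*)=v_*$ with $P_*$ reduced on which everything rests, come down to the same ``careful case analysis'' of the pipes of $P_*$ that you defer; until that analysis is actually carried out, the proof is incomplete.
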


\begin{proof}
By Lemma \ref{lemma:3.2} each element of $\RPipes(v_0,v_*)$ has $P_*$ as a subset. Furthermore, if $P\in \RPipes(v_0,v_*)$, then by Proposition \ref{prop:PtoL} we have that $\fw(P)\in W(\rep_Q(\bd))$. 
But $W(\rep_Q(\bd)) = \{\mathfrak{w}(P_*)\}$. 
The only way for $P$ to simultaneously contain $P_*$ and satisfy $\mathfrak{w}(P) = \mathfrak{w}(P_*)$ is if either $P = P_*$ or if $P$ contains a double crossing in a single block of the snake region (if there are only ever single crossings, then $\mathfrak{w}(P)$ would have a crossing). But then $P$ would not be a reduced pipe dream. Thus, $P = P_*$ and we conclude that $\RPipes(v_0,v_*) = \{P_*\}$. By \cite[Lemma 2]{Miller}, $\Pipes(v_0,v_*) = \{P_*\}$ as well.
\end{proof}

We are now ready to complete our combinatorial proof of Buch and Rim\'anyi's formula for the codimension of a bipartite type $A$ quiver locus in its representation space. As noted earlier, Buch and Rim\'anyi's formula holds more generally (i.e., for arbitrarily oriented type $A$ quivers).

\begin{corollary}\label{cor:codim}\cite[Corollary 2]{BuchRim}
    Let $\Omega$ be a quiver locus. Then for any minimal lacing diagram $\bw \in W(\Omega)$, we have $|\bw|=\codim\Omega$.
\end{corollary}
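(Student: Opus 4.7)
The plan is to match $\codim \Omega$ with the crossing count $|\bw|$ by routing through a reduced pipe dream for $v(\Omega)$, using the bipartite ratio formula \eqref{eq:ratioformulas} for the multidegree as the bridge. First I would identify $\codim \Omega$ combinatorially: since $\codim \Omega$ equals the degree of the multidegree $\mathcal{Q}_\Omega(\bt-\bs)$, and each double Schubert polynomial $\fS_v(\ba;\bb)$ is homogeneous of degree $\ell(v)$ in $\ba-\bb$, the ratio formula immediately yields $\codim \Omega = \ell(v(\Omega)) - \ell(v_*)$.

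Next I would pick any $P \in \RPipes(v_0, v(\Omega))$. By Lemma~\ref{lemma:3.2} we have $P_* \subseteq P$, and since $P_*$ and the snake region are disjoint subsets of the northwest $d_y\times d_x$ subgrid, the cross tiles of $P$ lying in the snake region number exactly
\[
|P|-|P_*| \;=\; \ell(v(\Omega)) - \ell(v_*) \;=\; \codim \Omega,
\]
using that $|P|=\ell(v(\Omega))$ because $P$ is reduced, and $|P_*|=\ell(v_*)$ because Lemma~\ref{lemma:pipeformula} forces $P_*$ to be the unique (hence reduced) element of $\Pipes(v_0, v_*)$.

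Now set $\bu := \fw(P)$; by Proposition~\ref{prop:wPminLacingDiagram}, $\bu \in W(\Omega)$. Writing $\bu=(u_n,u^n,\dots,u_1,u^1)$, Proposition~\ref{prop:followpipes} gives $\delta(P_k) = c(u_k)$ and $\delta(\rot(P^k)) = c(\rot(u^k))$, and I would verify that each mini pipe dream is itself reduced by observing that every cross tile of $P_k$ or $\rot(P^k)$ witnesses two distinct pipes of $P$ crossing in that tile, so reducedness of $P$ prohibits any such pair from crossing twice inside the rectangle. Hence $|P_k| = \ell(c(u_k))$ and $|P^k| = \ell(c(\rot(u^k)))$, and summing over $k$ yields $|\bu| = |P \cap \text{snake region}| = \codim \Omega$. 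To conclude for an arbitrary $\bw \in W(\Omega)$, I would invoke the defining property of minimality: all elements of $W(\Omega)$ share the same extended-crossing count, so $|\bw|=|\bu|=\codim \Omega$.

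The main obstacle I expect is the reducedness claim for the mini pipe dreams $P_k, P^k$: one must carefully distinguish pipes of $P$ from pipe segments inside a mini dream and recognize that each cross tile necessarily witnesses two genuinely distinct pipes of $P$, so that reducedness of $P$ transfers down to the mini dreams. Once that step is in place, the length bookkeeping and the appeal to minimality are essentially immediate.
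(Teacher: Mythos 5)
Your proof is correct and follows essentially the same route as the paper: identify $\codim\Omega$ with $\ell(v(\Omega))-\ell(v_*)=|P\setminus P_*|$ for a reduced $P\in\RPipes(v_0,v(\Omega))$, use Proposition~\ref{prop:wPminLacingDiagram} to get a minimal lacing diagram $\fw(P)$, match its extended crossings with the cross tiles in the snake region, and finish by noting all elements of $W(\Omega)$ have the same length. The only (harmless) variations are that you derive $\codim\Omega=\ell(v(\Omega))-\ell(v_*)$ from degree considerations in the ratio formula rather than citing it directly, and you count crossings via reducedness of the mini pipe dreams and $\ell(c(u_k))$ rather than via Lemma~\ref{lem:completedPipes}; your reducedness argument for the mini pipe dreams is sound since pipes traverse each block monotonically.
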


\begin{proof} 
    By \cite[Equation 2.12]{KKR} (which follows easily from basic properties of the bipartite Zelevinsky map \cite{KR}), we have $\codim(\Omega)=\ell(v(\Omega))-\ell(v_\ast)$, where $\ell(v)$ denotes the length of the permutation $v$. Furthermore, $\ell(v(\Omega))-\ell(v_\ast) = |P\setminus P_\ast|$, for any $P\in \text{RPipes}(v_0,v(\Omega))$.
    
    Let $\bw=\fw(P)$ be the image of $P$ under the pipes to laces map. By Proposition \ref{prop:PtoL}, $\bw \in W(\Omega)$. Clearly $|\bw| \geq |P\setminus P_\ast|$. However, by Lemma \ref{lem:completedPipes}, all crossings in $\bw$'s extended diagram correspond to $\textcross$ tiles in $P$, so that $|\bw|=|P\setminus P_\ast|$. Therefore $|\bw|=|P\setminus P_\ast|=\codim \Omega$, as desired. The statement of corollary now follows by recalling that all elements of $W(\Omega)$ have the same number of crossings (in their extended diagrams). 
\end{proof}

\begin{figure}
\begin{align}
\begin{blockarray}{cccc|cc|c|ccc|cc}
            & s_1^2 & s_2^2 & s_3^2 & s_1^1 & s_2^1 & t_1^0 & t_1^1 & t_2^1 & t_3^1 & t_1^2 & t_2^2  \\
      \begin{block}{c[ccccc|cccccc]}
t_1^0 &  &  &  & 1 & 0 &  & \\
\cline{1-1}
t_1^1 & 1 & 0 & 0 &  &  & &   \\
t_2^1 & 0 & 1 & 0 &  &  & & &  \\
t_3^1 & 0 & 0 & 1 &  &  & & & & \\
\cline{1-1}
t_1^2 &  &  &  & & & 1 & 0 & 0 & 0 & \\
t_2^2 &  &  &  & & & 0 & 1 & 0 & 0 & &  \\
\cline{1-12}
s_1^2 &  &  &  &  0 & 1 & & & & & 0 & 0\\
s_2^2 &  &  &  &  0 & 0 &  &  &  &  &  1  &  0  & \\
s_3^2 &  &  &  &  0 & 0 &  &  &  &  &  0  &  1  &\\
\cline{1-1}
s_1^1 & & & & & & & 0 & 1 & 0 & \\
s_2^1 & & & & & & & 0 & 0 & 1 &\\
      \end{block}
    \end{blockarray}
\end{align}
\caption{The Zelevinsky Permutation associated to the lacing diagram $\mathbf{w}$ in Figure \ref{fig:laces} with the block structure now labeled by our alphabets $\mathbf{s}, \mathbf{t}$.} 
\label{fig:Zpermlabeled}
\end{figure}

Lemma \ref{lemma:pipeformula} together with Proposition \ref{prop:pipedreamGrothendieck} implies that $\mathfrak{G}_{v_\ast}(\mathbf{t}, \mathbf{s}; \mathbf{s}, \mathbf{t})$ divides $\mathfrak{G}_{v(\Omega)}(\mathbf{t}, \mathbf{s}; \mathbf{s}, \mathbf{t})$. 
Note that, in the notation of Proposition \ref{prop:pipedreamGrothendieck}, the alphabet $\ba$ is the concatenated alphabet $\ba=(\bt,\bs)$ and the alphabet $\bb$ is the concatenated alphabet $\bb=(\bs,\bt)$. Combining this with the ratio formulas (see equation \eqref{eq:ratioformulas}) yields the \textit{bipartite pipe formulas} \cite[Theorems 3.7 and 5.11]{KKR}. See \cite{KKR} for details.

\begin{theorem}\label{thm:pipeformulas}\cite[Theorem 3.7]{KKR}
For any bipartite type $A$ quiver locus $\Omega$, we have
\begin{align*}
\mathcal{Q}_{\Omega}(\bt-\bs)&=\sum_{P \in \RPipes(v_0, v(\Omega))}(\bt-\bs)^{P\setminus P_\ast}\, , \\
K\mathcal{Q}_\Omega(\mathbf{t}/\mathbf{s})&=\sum_{P \in \Pipes(v_0, v(\Omega))} (-1)^{|P\setminus P_*|-\codim \Omega} \left( 1- \frac{\mathbf{t}}{\mathbf{s}} \right)^{P \setminus P_*}\,.
\end{align*}
\end{theorem}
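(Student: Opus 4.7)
The plan is to apply Proposition \ref{prop:pipedreamGrothendieck} (the pipe dream formulas for $\fG$ and $\fS$) to both the numerator and denominator of each bipartite ratio formula in \eqref{eq:ratioformulas}, and to evaluate the denominator explicitly via Lemma \ref{lemma:pipeformula}, so that the common factor from the denominator cancels visibly against a prefix of every numerator monomial.

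First, I would use Lemma \ref{lemma:pipeformula} to conclude $\Pipes(v_0,v_*)=\{P_*\}$; since every permutation admits at least one reduced pipe dream, $P_*$ must itself be reduced, so $\RPipes(v_0,v_*)=\{P_*\}$ and $\ell(v_*)=|P_*|$. Together with the observation that the Rothe diagram of the Zelevinsky permutation is confined to the northwest $d_y \times d_x$ subgrid (a consequence of conditions $(Z1)$--$(Z3)$), this gives $\RPipes(v_*)=\RPipes(v_0,v_*)$ and $\Pipes(v_*)=\Pipes(v_0,v_*)$. Feeding these into Proposition \ref{prop:pipedreamGrothendieck} yields
\[
\fS_{v_*}(\bt,\bs;\bs,\bt)=(\bt-\bs)^{P_*}, \qquad \fG_{v_*}(\bt,\bs;\bs,\bt)=\left(1-\tfrac{\bt}{\bs}\right)^{P_*}.
\]

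Next, I would apply Proposition \ref{prop:pipedreamGrothendieck} to $\fS_{v(\Omega)}$ and $\fG_{v(\Omega)}$, restricting the summations to $\RPipes(v_0,v(\Omega))$ and $\Pipes(v_0,v(\Omega))$ by the same Rothe diagram argument. By Lemma \ref{lemma:3.2}, every such pipe dream $P$ contains $P_*$, so both monomials split as $(\bt-\bs)^P=(\bt-\bs)^{P_*}(\bt-\bs)^{P\setminus P_*}$ and $(1-\bt/\bs)^P=(1-\bt/\bs)^{P_*}(1-\bt/\bs)^{P\setminus P_*}$. Using the length identity $\codim\Omega = \ell(v(\Omega))-\ell(v_*)$ from \cite[Eq.~2.12]{KKR} together with $\ell(v_*)=|P_*|$, the sign exponent rewrites as $|P|-\ell(v(\Omega)) = |P\setminus P_*|-\codim\Omega$. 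Substituting into \eqref{eq:ratioformulas}, the factors $(\bt-\bs)^{P_*}$ and $(1-\bt/\bs)^{P_*}$ cancel between numerator and denominator, leaving exactly the two pipe formulas in the statement.

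The main obstacle is the equality $\Pipes(v(\Omega))=\Pipes(v_0,v(\Omega))$ (and likewise for reduced pipe dreams), i.e.\ justifying that pipe dreams for the Zelevinsky permutation are supported inside the northwest $d_y \times d_x$ subgrid. I would establish this directly from conditions $(Z1)$--$(Z3)$: the arrangement of 1's forces any Rothe position to lie in this subgrid, so reduced pipe dreams (which are always supported on the Rothe diagram by \cite[Exercise 16.2]{MillerSturmfels2005}) are as well; nonreduced pipe dreams then reduce to this case by writing them as unions of reduced ones \cite[Lemma 2]{Miller}. Once this containment is in hand, the remainder of the argument is a short algebraic manipulation.
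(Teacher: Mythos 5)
Your overall route is the same as the paper's: start from the ratio formulas \eqref{eq:ratioformulas}, expand numerator and denominator by Proposition \ref{prop:pipedreamGrothendieck}, collapse the denominator to a single monomial via Lemma \ref{lemma:pipeformula}, factor out $P_*$ from each numerator term via Lemma \ref{lemma:3.2}, and cancel. The algebraic bookkeeping you do is correct, in particular the sign identity $|P|-\ell(v(\Omega))=|P\setminus P_*|-\codim\Omega$ coming from $\codim\Omega=\ell(v(\Omega))-\ell(v_*)$ and $|P_*|=\ell(v_*)$.

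However, the step you yourself flag as the main obstacle --- passing from the sum over $\Pipes(v(\Omega))$ in Proposition \ref{prop:pipedreamGrothendieck} to the sum over $\Pipes(v_0,v(\Omega))$ --- is resolved incorrectly. First, the Rothe diagram of $v(\Omega)$ is \emph{not} confined to the northwest $d_y\times d_x$ subgrid: for the permutation of Figure \ref{fig:Zperm}, namely $v=(4,1,2,3,6,7,5,10,11,8,9)$ with $d_y=6$, $d_x=5$, the cells $(8,8),(8,9),(9,8),(9,9)$ lie in the Rothe diagram. Second, even when the Rothe diagram is known, reduced pipe dreams are not supported on it (already $\RPipes(132)=\{\{(1,2)\},\{(2,1)\}\}$ while $D(132)=\{(2,2)\}$); the cited fact from \cite[Exercise 16.2]{MillerSturmfels2005} concerns minimal completions of partial permutations, and $v(\Omega)$ is not such a completion. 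Indeed the desired set equality fails: for the example above, the left-justified (Bergeron--Billey ``bottom'') pipe dream of $v(\Omega)$ has crosses at $(8,1),(8,2),(9,1),(9,2)$, so it is a reduced pipe dream for $v(\Omega)$ not lying in $\RPipes(v_0,v(\Omega))$. What actually saves the argument is the specialization, not a support statement about abstract pipe dreams: since $\ba=(\bt,\bs)$ and $\bb=(\bs,\bt)$ satisfy $a_i=b_{v_0(i)}$ for every $i$, any cross at a position $(i,v_0(i))$ contributes a vanishing factor, and one must show that every pipe dream for $v(\Omega)$ with a cross outside the northwest $d_y\times d_x$ rectangle necessarily contains a cross at such a position (in the example, the offending pipe dream has a cross at the killing position $(8,2)$). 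This vanishing lemma is precisely the content the paper leaves to \cite{KKR}; without it, or some substitute for it, your proof does not go through.
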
 

\section{A combinatorial proof of the bipartite cohomological component formula}\label{sect:cohomology}
In this section we provide a combinatorial proof of the type $A$ cohomological quiver component formula. The type $A$ cohomological quiver component formula was first stated and proved by Buch and Rim\'anyi in \cite{BuchRim}. Our proof was inspired by Yong's work \cite{yongComponent}. 

As explained in Section \ref{sec:pipestolaces}, given any $P\in \Pipes(\bd)$, we can extract mini pipe dreams $P_i$ and $P^i$ from $P$ by restricting $P$ to the blocks of the snake region $\beta_i$ and $\alpha_i$ respectively and we have the operation $\pi: \Pipes(\bd)\rightarrow S_\bd$ defined in Section \ref{sec:pipestolaces}. We begin with a definition.

\begin{definition}
Let $\mathbf{w}=(w_n,w^n, \dots, w_1, w^1)$ be a lacing diagram associated to a representation in $\rep_Q(\bd)$. A pipe dream $P\in \Pipes(\bd)$ is a \textit{pipe network} for $\mathbf{w}$ if $\pi(P) = \fc(\bf{w})$. That is, $P_i \in \Pipes(w_i)$ and $\rot(P^i) \in \Pipes(w^i)$ for each $i=1, \dots, n$.  
The set of all pipe networks for $\mathbf{w}$ is denoted $\text{PipeNet}(\mathbf{w})$. A pipe network $P\in \text{PipeNet}(\mathbf{w})$ is \textit{PN-reduced} if for each $i=1, \dots, n$ one has $P_i \in \RPipes(w_i)$ and $\rot(P^i) \in \RPipes(w^i)$. Denote the set of PN-reduced pipe networks for $\bw \in W(\Omega)$ by $\RPipeNet(\bw)$.
\end{definition}

Note that a $PN$-reduced pipe network need not be reduced as a pipe dream. For example, the second pipe dream in Figure \ref{fig:Pipes} is non-reduced, but is a $PN$-reduced pipe network. However, if $\bf{w}$ is a \emph{minimal} lacing diagram, we have the following: 

\begin{lemma}\label{lem:PNtoRedPipes}
    Let $\mathbf{w}\in W(\Omega)$ be a minimal lacing diagram. If $P\in \RPipeNet(\bw)$, then $P\in \RPipes(v_0, v(\Omega))$.
\end{lemma}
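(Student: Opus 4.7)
The plan is to establish two things, which together imply $P \in \RPipes(v_0, v(\Omega))$: (i) $|P| = \ell(v(\Omega))$ and (ii) $\delta(P) = v(\Omega)$. All cross tiles of $P$ automatically lie in the northwest $d_y \times d_x$ subgrid, since $P \supseteq P_*$ (from membership in $\Pipes(\bd)$) and any additional cross tiles sit inside the mini pipe dreams in the snake region.

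First I would carry out a cross-tile count. By PN-reducedness and Lemma \ref{lem:completedPipes}, $|P_i| = \ell(c(w_i))$ and $|P^i| = \ell(c(\rot(w^i)))$. By Lemma \ref{lemma:pipeformula}, $P_*$ is the unique reduced pipe dream for $v_*$, so $|P_*| = \ell(v_*)$. Therefore
\[
|P| \;=\; \ell(v_*) \;+\; \sum_{i=1}^n \bigl(\ell(c(w_i)) + \ell(c(\rot(w^i)))\bigr) \;=\; \ell(v_*) + |\bw|,
\]
the last equality being just the definition of $|\bw|$ as the total crossing count in the extended diagram of $\bw$. Since $\bw \in W(\Omega)$ is minimal, Corollary \ref{cor:codim} gives $|\bw| = \codim \Omega = \ell(v(\Omega)) - \ell(v_*)$, hence $|P| = \ell(v(\Omega))$.

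The harder step is to identify $\delta(P)$ with $v(\Omega)$. Note that PN-reducedness, together with the fact that the follow-the-pipes map recovers a partial permutation from any of its reduced pipe dreams, immediately gives $\fw(P) = \bw$. To turn this into the permutation identity $\delta(P) = v(\Omega)$, I would trace pipes block-by-block through $P$: outside the snake region, the cross tiles of $P_*$ route pipes exactly as $v_*$ prescribes, and inside each snake block $\beta_i$ (respectively $\alpha_i$) the mini pipe dream $P_i$ (respectively $\rot(P^i)$) routes pipes according to $c(w_i)$ (respectively $c(\rot(w^i))$). Since $\bw \in W(\Omega)$, the defining properties $(Z1),(Z2),(Z3)$ of the Zelevinsky permutation guarantee that the combined routing across all blocks reproduces precisely the $1$'s of $v(\Omega)$, yielding $\delta(P) = v(\Omega)$.

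The main obstacle is making the block-by-block pipe trace rigorous, specifically verifying that no pair of pipes double-crosses between the $P_*$ region and a mini pipe dream, or between different mini pipe dreams. A cleaner alternative is to use that for any pipe dream, $|P| \geq \ell(\delta(P))$ with equality if and only if $P$ is reduced. Combining this with $|P| = \ell(v(\Omega))$ already proved and the identification $\fw(P) = \bw$, which pins down where each pipe exits, this should force $\delta(P) = v(\Omega)$ and simultaneously establish that $P$ is reduced, completing the proof.
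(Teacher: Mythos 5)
Your opening length count is correct and matches the computation the paper does in its Step 4: PN-reducedness gives $|P_i|=\ell(c(w_i))$ and $|P^i|=\ell(c(\rot(w^i)))$, these sum to $|\bw|$, and Corollary \ref{cor:codim} converts $|\bw|$ into $\ell(v(\Omega))-\ell(v_*)$, so $|P|=\ell(v(\Omega))$. The gap is in the second half. Knowing $|P|=\ell(v(\Omega))$ together with $\ell(\delta(P))\le |P|$ only yields $\ell(\delta(P))\le \ell(v(\Omega))$; it cannot by itself ``force'' $\delta(P)=v(\Omega)$, since there are many pipe dreams with $\ell(v(\Omega))$ crosses in the region whose Demazure product is a different permutation. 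What is genuinely needed --- and what you defer to the phrase ``the defining properties $(Z1),(Z2),(Z3)$ \ldots guarantee that the combined routing \ldots reproduces precisely the $1$'s of $v(\Omega)$'' --- is a \emph{verification} that the follow-the-pipes permutation of $P$ satisfies $(Z1)$ and $(Z2)$. Those properties characterize $v(\Omega)$ in terms of lace counts of $\bw$; one must still prove that a lace of $\bw$ from $y_i$ to $x_j$ corresponds, via $\fw(P)=\bw$ and $P_*\subseteq P$, to a pipe entering the grid in block row $y_i$ and exiting in block column $x_j$ (and the analogous statement for $(Z2)$). This is exactly Steps 2 and 3 of the paper's proof, and it is the bulk of the argument: it requires tracking through which walls a pipe enters and leaves each $\alpha_k$ and $\beta_k$ block of the snake region. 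Your proposal does not contain this argument, and your ``cleaner alternative'' does not circumvent it.

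Two further remarks. First, the paper proves reducedness of $P$ directly (Step 1): a double crossing inside one snake block contradicts PN-reducedness, while a double crossing spread over two blocks produces, via $\fw(P)=\bw$, a double crossing of laces in the extended diagram of $\bw$, contradicting minimality. Your alternative would instead obtain reducedness as a byproduct of the length equality, which is a legitimate reorganization --- but only after the block-count verification is supplied, together with the fact (which you use implicitly) that the follow-the-pipes permutation of a word is $\le$ its Demazure product in Bruhat order. Second, even after $(Z1)$ and $(Z2)$ are verified, one still needs the observation in the paper's Step 4 that among permutations with the block-count data of $v(\Omega)$, the length $\ell(v(\Omega))$ is attained only by $v(\Omega)$ itself (this is how $(Z3)$ is handled); your sketch does not address the within-block arrangement of the $1$'s at all.
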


\begin{proof}
    The proof will proceed in four steps. In the first step we will show that every $P \in \RPipeNet(\bw)$ is a reduced pipe dream (in the usual sense). The final three steps will demonstrate that any $P \in \RPipeNet(\bw)$ satisfies the three conditions defining the Zelevinsky permutation $v(\Omega)$. 

   First, however, we note that by the definitions of $\RPipeNet(\bw)$ and $\RPipes(w)$ (for $w$ a partial permutation), that if $P \in \RPipeNet(\bw)$, we have $\fw(P)=\bw$. This will be used extensively below. As a final remark, throughout this proof we will be viewing our $d_y \times d_x$ pipe dreams as being embedded in a $d \times d$ pipe dream in the usual way. 

    \textit{Step 1:} We first show that $P$ is a reduced pipe dream, that is, we show that no pair of pipes cross twice. By contradiction, suppose that two pipes $\ell_1$ and $\ell_2$ have a double crossing. If this double crossing happens in a single block of the snake region, say $\beta_i$, then it follows that $P_i$ is not a reduced pipe dream for $w_i$. This contradicts that $P$ is $PN$-reduced. 
So, suppose that the double intersection happens in distinct blocks of the snake region. Since $\fw(P)=\bw$, a double intersection of pipes $\ell_1$ and $\ell_2$ in distinct blocks in the snake region of $P$ results in a double intersection of two laces in the (extended) lace diagram $\mathbf{w}$. This implies that  $\mathbf{w}$ is a non-minimal lacing diagram, a contradiction.

\textit{Step 2:} Next we prove that $P$ satisfies condition (\emph{Z1}) of the Zelevinsky permutation. There are four cases we must consider, corresponding to whether $z_i$ and $z_j$ are sources or sinks. We will only demonstrate one of these cases in detail, as the arguments for the remaining cases are very similar. 

We consider the case where $z_i=y_i$ is a source and $z_j=x_j$ is a sink, where, necessarily, $i \geq j$. We will denote the number of laces from $y_i$ to $x_j$ by $n_{ij}$.\footnote{Here we are counting laces in the non-completed lacing diagram} We must show that this is equal to $n_{ij}'$, which we define to be the number of $1$s in the permutation $\delta(P)$ which lie in the intersection of block row $y_i$ and block column $x_j$. 

To see that $n_{ij} \leq n_{ij}'$, let $\ell$ be a lace in $\bw$ which starts at $y_i$ and ends at $x_j$. Let $p_\ell$ be the corresponding pipe in $P$ under the equality $\fw(P)=\bw$. Consider the segment $\kappa_\ell$ of $p_\ell$ which runs from the interface between block $\alpha_i$ and $\beta_{i-1}$ to the interface between block $\alpha_j$ and block $\beta_j$. This $\kappa_\ell$ is precisely the segment of $p_\ell$ which is identified with $\ell$ via pipes to laces. In our colouring conventions for pipe dreams, $\kappa_\ell$ is the black segment of $p_\ell$. In these colouring conventions we see that the segment of $p_\ell$ in block $\alpha_i$ is red and necessarily exits block $\alpha_i$ via its left wall. Indeed, if this were not the case then $p_\ell$ would necessarily exit via the south wall of $\alpha_i$ and $\ell$ would not terminate at $y_i$ but at some vertex to the left of $y_i$ in $Q_0$. A similar analysis applies to the segment of $p_\ell$ contained in $\alpha_j$. In our colouring conventions, this segment is coloured red and necessarily exits $\alpha_j$ via its north wall. Indeed, if this were not the case then this segment would exit via the east wall and $\ell$ would not have $x_j$ as its right endpoint, but some other vertex to the right of $x_j$ in $Q_0$. 

We have therefore shown that $p_\ell$ exits the snake region in block row $y_{i}$ and block column $x_j$. Since $P_\ast \subset P$, we see that (reading left to right) the pipe $p_\ell$ enters $P$ in block row $y_i$ and exits $P$ in block column $x_i$. Since $P$ is reduced, we can read off the permutation $\delta(P)$ by following the pipes in this way and we conclude that $n_{ij} \leq n_{ij}'$. 

The proof that $n_{ij}' \leq n_{ij}$ is very similar. Let $p$ be a pipe which enters the west wall of $P$ in block row $y_i$ and exits in block column $x_j$. By reversing the previous argument we see that $p$ produces a lace between $y_i$ and $x_j$ in $\bw$, since $\bw=\fw(P)$. This completes the proof of (\emph{Z1}). 

\textit{Step 3:} In this step we prove that $P$ satisfies condition (\emph{Z2}) of the Zelevinsky permutation. There are two cases one must consider here, depending on whether the rightmost vertex is a source or a sink. We will only prove one of these cases, as the other is very similar. So, let $z_i=x_i$ so that $z_j=y_i$\footnote{Note that in the statement of (\emph{Z2}) the vertex $z_i$ appears one space to the \textit{right} of $z_j$}. Let $N_{i}$
be the number of laces between $x_i$ and $y_i$ in $\bw$ and let $N_i'$ be the number of $1$s which appear in the corresponding block of the permutation $\delta(P)$. Since $P$ is reduced, $N_i'$ is equivalently the number of pipes which enter $P$ in block row $x_i$ and exit in block column $y_i$. 

By analyzing the block decomposition of the $d \times d$ pipe dream $P$, and using the fact that $P_\ast \subset P$, we see that $N_i'$ is equivalently the number of pipes which enter the snake region via the south wall of $\beta_i$ and exit the snake region via the east wall of $\beta_i$. Now, using the fact that $\fw(P)=\bw$ we see that $N_i$ is equal to the number of pipes which enter $\beta_i$ through its west wall and exit $\beta_i$ via its north wall. It follows that there are $\bd(y_i) - N_i$ pipes which enter $\beta_i$ through its west wall and exit $\beta_i$ through its east wall. This in turn implies that there are $\bd(x_i) - N_i$ pipes which enter $\beta_i$ through its south wall and exit $\beta_i$ via its north wall. Thus, there are $\bd(x_i)-(\bd(x_i)-N_i)=N_i$ pipes which enter $\beta_i$ in the south and exit $\beta_i$ in the east, proving $N_i=N_i'$, and completing the proof of property (\emph{Z2}).

\textit{Step 4:} Suppose that $z\in S_d$ is any permutation with the same number of $1$s in each block as $v(\Omega)$. Since the $1$s in $v(\Omega)$ appear northwest to southeast along block rows and block columns, we observe that $z = v(\Omega)$ if and only if the length of the permutation $z$ is equal to the length of $v(\Omega)$. We write $\ell(z)$ to denote the length of $z\in S_d$.

Now, by conditions ($Z1$) and ($Z2$), $\delta(P)$ and $v(\Omega)$ have the same number of $1$s in each block. So to conclude that $\delta(P) = v(\Omega)$, we will check that $\ell(\delta(P)) = \ell(v(\Omega))$.
Since $\mathbf{w}$ is a minimal lacing diagram we have that 
\begin{align*}
\codim \Omega &= \ell(\mathbf{w}) = 
\sum_{i=1}^n \ell(w_i) + \ell(w^i)
=\sum_{i=1}^n |P_i|+|\text{rot}(P^i)| 
=\sum_{i=1}^n |P_i|+|P^i| 
=|P\setminus P_*|,
\end{align*}
where the first equality is Corollary \ref{cor:codim}, and the third equality holds because each $P_i$ and $\text{rot}(P^i)$ are reduced pipe dreams. 
Now, if $Q\in \RPipes(v_0, v(\Omega))$, then $\codim \Omega = |Q\setminus P_*|$. Indeed, from \cite[Equation 2.12]{KKR} we have $\codim(\Omega)=\ell(v(\Omega))-\ell(v_\ast)$. Since $Q$ and $P_\ast$ are reduced pipe dreams we have $|Q|=\ell(v(\Omega))$ and $|P_\ast|=\ell(v_\ast)$, which implies that $\codim \Omega=|Q\setminus P_\ast|$. Thus, $|Q| = |P|$. 
Since $P$ is a reduced pipe dream, we have
\begin{equation}\label{eq:plusCompare}
\ell(\delta(P)) = |P| = |Q| = \ell(v(\Omega)),
\end{equation}
as desired.
\end{proof}

The following corollary was proven geometrically in \cite{KKR}. We will make use of it later.
\begin{corollary} \label{cor:OntoLaces}
If $\bw\in W(\Omega)$, then there exists a $P\in \RPipes(v_0,v(\Omega))$ with $\pi(P) = \fc(\bw)$. 
\end{corollary}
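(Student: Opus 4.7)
The plan is to construct $P$ explicitly from $\bw$ as a pipe network and then invoke Lemma \ref{lem:PNtoRedPipes}. Given $\bw=(w_n,w^n,\dots,w_1,w^1)\in W(\Omega)$, each component $w_i$ (resp. $w^i$) is a partial permutation matrix, and since every permutation admits a reduced pipe dream, the sets $\RPipes(w_i)$ and $\RPipes(w^i)$ are nonempty by Lemma \ref{lem:completedPipes}. So for each $i=1,\dots,n$ I will choose some $Q_i\in\RPipes(w_i)$ on a $\bd(y_i)\times\bd(x_i)$ grid and some $Q^i\in\RPipes(w^i)$ on a $\bd(y_{i-1})\times\bd(x_i)$ grid.

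Next, I build $P$ on the $d_y\times d_x$ grid by placing $Q_i$ in block $\beta_i$ of the snake region, placing $\rot(Q^i)$ in block $\alpha_i$ of the snake region, and taking all cells of $P_*$ outside the snake region. By construction, $P$ contains $P_*$, so $P\in\Pipes(\bd)$ in the sense of Section \ref{sec:pipestolaces}. Moreover, restricting $P$ to block $\beta_i$ recovers $P_i=Q_i\in\RPipes(w_i)$, and restricting to block $\alpha_i$ (and rotating) recovers $\rot(P^i)=Q^i\in\RPipes(w^i)$. Hence $P\in\RPipeNet(\bw)$ by definition.

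Applying Lemma \ref{lem:PNtoRedPipes} immediately gives $P\in\RPipes(v_0,v(\Omega))$. Finally, computing $\pi$ on $P$ block by block yields
\[
\pi(P)=(\delta(P_n),\delta(\rot(P^n)),\dots,\delta(P_1),\delta(\rot(P^1)))=(c(w_n),c(\rot(w^n)),\dots,c(w_1),c(\rot(w^1)))=\fc(\bw),
\]
since each $Q_i$ is a reduced (hence in particular a) pipe dream for $c(w_i)$ and similarly for each $Q^i$. This completes the construction.

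The proof is essentially immediate once Lemma \ref{lem:PNtoRedPipes} is in hand: the only real content is exhibiting any $PN$-reduced pipe network for $\bw$, and such a network can be assembled from independently chosen reduced pipe dreams in each snake block together with the fixed pattern $P_*$ outside the snake region. There is no significant obstacle; the substantive work (verifying that such a $P$ is globally a reduced pipe dream for $v(\Omega)$, not just block-by-block) has already been absorbed into Lemma \ref{lem:PNtoRedPipes}.
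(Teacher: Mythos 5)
Your proof is correct and follows the same route as the paper: both arguments pick a reduced pipe dream for each partial permutation $w_i$, $w^i$ (the paper cites the existence of such a pipe dream from \cite[\S16]{MillerSturmfels2005}), assemble them with $P_*$ into an element of $\RPipeNet(\bw)$, and then invoke Lemma \ref{lem:PNtoRedPipes}. Your write-up merely makes the block-by-block assembly and the computation of $\pi(P)$ more explicit than the paper does.
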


\begin{proof}
    Recall that to any $k\times \ell$ partial permutation matrix $w$, there is an associated reduced pipe dream $P_w$ supported on a $k\times \ell$ rectangle, satisfying $\delta(P_w) = c(w)$ \cite[\S16]{MillerSturmfels2005}. Thus, given a lacing diagram $\bw\in W(\Omega)$, there exists a reduced pipe network $P\in \RPipeNet(\bw)$. By Lemma \ref{lem:PNtoRedPipes}, $P\in \RPipes(v_0,v(\Omega))$.
\end{proof}

The following is the main result that we need to give our direct combinatorial proof of the multidegree component formula. 

\begin{proposition}\label{lem:3.7}
The following equality of sets of pipe dreams holds:
\begin{align*}
\bigcup_{\mathbf{w} \in W(\Omega)} \RPipeNet(\mathbf{w}) = \RPipes(v_0, v(\Omega)).
\end{align*}
Moreover, the union on the left side of the equality is a disjoint union.
\end{proposition}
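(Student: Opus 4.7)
The plan is to establish the two containments separately, using the already-developed machinery, and then to observe disjointness as a quick consequence of injectivity of $\fc$.

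For the containment $\RPipes(v_0, v(\Omega)) \subseteq \bigcup_{\bw \in W(\Omega)} \RPipeNet(\bw)$, I would start from an arbitrary $P \in \RPipes(v_0, v(\Omega))$ and set $\bw := \fw(P)$. By Proposition \ref{prop:wPminLacingDiagram}, $\bw \in W(\Omega)$, so it remains to check that $P \in \RPipeNet(\bw)$. To check that each $P_i$ is a pipe dream for $w_i$ (and $\rot(P^i)$ a pipe dream for $w^i$), I invoke Proposition \ref{prop:followpipes}, which gives $\pi(P) = \fc(\bw)$; unpacking this component-wise yields $\delta(P_i) = c(w_i)$ and $\delta(\rot(P^i)) = c(w^i)$. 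To verify reducedness of these mini pipe dreams, I use that $P$ itself is reduced: any double crossing of two pipes confined to a single snake block of $P$ would in particular be a double crossing inside $P$, contradicting $|P| = \ell(\delta(P))$. Thus each $P_i \in \RPipes(w_i)$ and $\rot(P^i) \in \RPipes(w^i)$, so $P \in \RPipeNet(\bw)$.

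The reverse containment $\bigcup_{\bw \in W(\Omega)} \RPipeNet(\bw) \subseteq \RPipes(v_0, v(\Omega))$ is exactly the content of Lemma \ref{lem:PNtoRedPipes}, which has already been proved, so no further work is needed here.

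For disjointness, suppose $P \in \RPipeNet(\bw) \cap \RPipeNet(\bw')$ for some $\bw, \bw' \in W(\Omega)$. Both memberships imply $\pi(P) = \fc(\bw)$ and $\pi(P) = \fc(\bw')$, so $\fc(\bw) = \fc(\bw')$. By Lemma \ref{lem:LDPerm}, the map $\fc$ is injective (with left inverse $LD$), giving $\bw = \bw'$.

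There is no real obstacle in this argument: the two nontrivial directions are taken care of by Proposition \ref{prop:wPminLacingDiagram} together with Proposition \ref{prop:followpipes} on one side, and by Lemma \ref{lem:PNtoRedPipes} on the other, while disjointness is immediate from Lemma \ref{lem:LDPerm}. The only small point requiring care is the observation that reducedness of the full pipe dream $P \in \RPipes(v_0, v(\Omega))$ forces reducedness of each mini pipe dream extracted from a snake block, which follows because any repeated crossing inside a snake block is also a repeated crossing of $P$.
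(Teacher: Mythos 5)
Your proof is correct and follows essentially the same route as the paper: the forward containment via Proposition \ref{prop:wPminLacingDiagram} together with Proposition \ref{prop:followpipes}, the reverse containment via Lemma \ref{lem:PNtoRedPipes}, and disjointness from injectivity of $\fc$. If anything, you supply slightly more detail than the paper by explicitly checking that reducedness of $P$ forces reducedness of each mini pipe dream.
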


\begin{proof}
First note that it is clear by construction that the union $\bigcup_{\mathbf{w} \in W(\Omega)} \RPipeNet(\mathbf{w})$ is a disjoint union. By Lemma \ref{lemma:3.2}, $\RPipes(v_0,v(\Omega))\subseteq \RPipes(\bd)$, so we may apply the operation $\pi$ to each $P\in \RPipes(v_0,v(\Omega))$. By Propositions \ref{prop:followpipes} and \ref{prop:PtoL}, if $P \in \RPipes(v_0, v(\Omega))$ then $LD(\pi(P)) \in W(\Omega)$. Hence we have the inclusion
\begin{align*}
\bigcup_{\mathbf{w} \in W(\Omega)} \RPipeNet(\mathbf{w}) \supseteq \RPipes(v_0, v(\Omega)).
\end{align*}
The reverse inclusion is the content of Lemma \ref{lem:PNtoRedPipes}.
\end{proof}

Before we prove the main theorem of this section, we introduce some notation that will be helpful below. Fix a lacing diagram $\mathbf{w}$. Given any subset $S=\{\beta_{i_1}, \dots, \beta_{i_p}, \alpha_{j_1}, \dots, \alpha_{j_q}\}$ of blocks of the snake region let $\PipeNet(\mathbf{w}; S)$ denote the set of equivalence classes of pipe dreams in $\PipeNet(\mathbf{w})$ taken with respect to the following equivalence relation: $P\sim_S P'$ if and only if $P$ and $P'$ are equal when restricted to any snake region block \textit{not} in $S$. Explicitly, we have 
\begin{align*}
P \sim_S P' \Leftrightarrow P \setminus \{P_{i_1}, \dots, P_{i_p}, P^{j_1}, \dots, P^{j_q}\} = P' \setminus \{ P'_{i_1}, \dots, P'_{i_p}, (P')^{j_1}, \dots, (P')^{j_q}\}.
\end{align*}
We similarly write $\RPipeNet(\mathbf{w}; S)$ for the analogous set of equivalences classes in $\RPipeNet(\mathbf{w})$.

\begin{remark}\label{rem:equivSums}
As a final remark, below we will write formulae involving sums over these sets of equivalence classes; i.e. expressions of the form $\sum_{[P] \in \RPipeNet(\mathbf{w}; S)} f(P)$ for some function $f$. This is to be interpreted as the evaluation of $f$ on any representative of the equivalence class $[P]$. Whenever we use this slight abuse of notation the function $f$ will be independent of the choice of representative of $[P]$. 
\end{remark}

We now complete our combinatorial proof of the cohomological  bipartite type $A$ quiver component formula, first proved by Buch and Rim\'anyi in \cite{BuchRim} (see also \cite{KKR}).

\begin{theorem}\label{thm:3.9}
For any bipartite type $A$ quiver locus $\Omega$, its multidegree can be written as
\begin{align}
\mathcal{Q}_{\Omega}(\mathbf{t}-\mathbf{s})&=\sum_{\mathbf{w} \in W(\Omega)} \mathfrak{S}_\mathbf{w}(\mathbf{t};\mathbf{s}) \,.\label{eqn:multidegree}
\end{align}
\end{theorem}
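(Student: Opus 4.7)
The plan is to combine the bipartite pipe formula (Theorem \ref{thm:pipeformulas}) with the combinatorial decomposition in Proposition \ref{lem:3.7}. The bipartite pipe formula gives
\[
\mathcal{Q}_\Omega(\bt-\bs) = \sum_{P \in \RPipes(v_0,v(\Omega))} (\bt-\bs)^{P \setminus P_\ast},
\]
while Proposition \ref{lem:3.7} tells us that
\[
\RPipes(v_0,v(\Omega)) = \bigsqcup_{\bw \in W(\Omega)} \RPipeNet(\bw).
\]
Splitting the sum according to this disjoint union reduces the theorem to the claim that, for each $\bw \in W(\Omega)$,
\[
\mathfrak{S}_\bw(\bt;\bs) = \sum_{P \in \RPipeNet(\bw)} (\bt-\bs)^{P \setminus P_\ast}.
\]

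To prove this identity, I would first observe that a $PN$-reduced pipe network $P$ for $\bw$ is determined by choosing, independently, a reduced pipe dream $P_k$ for $w_k$ in each $\beta_k$-block and a pipe dream $P^k$ with $\rot(P^k) \in \RPipes(\rot(w^k))$ in each $\alpha_k$-block (since all such $P$ contain $P_\ast$ by Lemma \ref{lemma:3.2}, the remaining cross tiles of $P$ sit exactly in the snake region). Consequently, the right-hand side factors as
\[
\sum_{P \in \RPipeNet(\bw)} (\bt-\bs)^{P \setminus P_\ast}
= \prod_{k=1}^n \Bigl( \sum_{P_k \in \RPipes(w_k)} (\bt^k-\bs^k)^{P_k} \Bigr) \cdot \prod_{k=1}^n \Bigl( \sum_{P^k} (\bt^{k-1}-\bs^k)^{P^k} \Bigr),
\]
where the second inner sum ranges over $P^k$ with $\rot(P^k)$ a reduced pipe dream for $\rot(w^k)$. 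By the pipe dream formula for Schubert polynomials (Proposition \ref{prop:pipedreamGrothendieck}), the first product is $\prod_k \mathfrak{S}_{w_k}(\bt^k;\bs^k)$.

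The main technical point is matching the second product with $\prod_k \mathfrak{S}_{\rot(w^k)}(\tilde{\bt}^{k-1};\tilde{\bs}^k)$. Here I would verify the bookkeeping explicitly: if $(i,j)$ is a cross tile of $P^k$ inside the $\alpha_k$-block, it corresponds under $180^\circ$ rotation to the cross tile $(i',j') = (\bd(y_{k-1})+1-i,\bd(x_k)+1-j)$ of $\rot(P^k)$, and by definition of the reversed alphabets we have $\tilde t^{k-1}_{i'} = t^{k-1}_i$ and $\tilde s^k_{j'} = s^k_j$. Thus the weight of $P^k$ under the $(\bt,\bs)$-conventions equals the weight of $\rot(P^k)$ under the $(\tilde\bt^{k-1},\tilde\bs^k)$-conventions, and Proposition \ref{prop:pipedreamGrothendieck} applied to $\rot(w^k)$ gives the desired equality.

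The hard part is really just the careful tracking of alphabets and rotations in the $\alpha_k$-blocks; once that is laid out, the proof is essentially a disjoint-union rewrite of the bipartite pipe formula. Everything else is supplied by results already established in the paper: Theorem \ref{thm:pipeformulas} for the starting identity, Proposition \ref{lem:3.7} for the partition into minimal lacing classes, Lemma \ref{lemma:3.2} to ensure $P_\ast \subseteq P$ so that $P \setminus P_\ast$ lies entirely in the snake region, and Proposition \ref{prop:pipedreamGrothendieck} for the Schubert polynomial pipe dream formula.
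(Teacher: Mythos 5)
Your proposal is correct and follows essentially the same route as the paper: start from the bipartite pipe formula, split the sum over $\RPipes(v_0,v(\Omega))$ using Proposition \ref{lem:3.7}, and then factor the inner sum block-by-block over the snake region; the paper merely formalizes your ``independent choices in each block'' step via an iterated peeling of equivalence classes $\RPipeNet(\bw;\beta_n)$, $\RPipeNet(\bw;\beta_n,\alpha_n)$, etc., and your explicit check of the rotation and reversed-alphabet bookkeeping in the $\alpha_k$-blocks is a correct spelling-out of a detail the paper leaves implicit.
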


\begin{proof}
First, note that by the multidegree version of the bipartite pipe formula (see Theorem \ref{thm:pipeformulas}) and Proposition \ref{lem:3.7}, there is an equality
\begin{align*}
\mathcal{Q}_{\Omega}(\mathbf{t}-\mathbf{s})=\sum_{P \in \RPipes(v_0, v(\Omega))} (\mathbf{t}-\mathbf{s})^{P \setminus P_*} =\sum_{\mathbf{w} \in W(\Omega)} \sum_{P \in \RPipeNet(\mathbf{w})} (\mathbf{t}-\mathbf{s})^{P\setminus P_*}\,.
\end{align*}
To prove \eqref{eqn:multidegree} it is therefore sufficient to show that for each fixed $\mathbf{w} \in W(\Omega)$ we have
\begin{align}\label{eqn:pipesEquiv}
\sum_{P \in \RPipeNet(\mathbf{w})} (\mathbf{t}-\mathbf{s})^{P\setminus P_*} &=\prod_{i=1}^n \mathfrak{S}_{w_i}(\mathbf{t}^i;\mathbf{s}^i) \prod_{i=1}^n \mathfrak{S}_{\rot (w^i)} (\tilde{\mathbf{t}}^{i-1}; \tilde{\mathbf{s}}^{i}) \,.
\end{align}
We may then rewrite the left-hand-side of \eqref{eqn:pipesEquiv} as
\begin{align}\label{eqn:equivClasses}
\sum_{[P] \in \RPipeNet(\mathbf{w}; \beta_n)} \sum_{P \in [P]} \prod_{i=1}^n (\mathbf{t}^i-\mathbf{s}^i)^{P_i}\prod_{i=1}^n (\tilde{\mathbf{t}}^{i-1} -\tilde{\mathbf{s}}^{i})^{\rot(P^i)}\,.
\end{align}
Fix an equivalence class $[P] \in \RPipeNet(\bw;\beta_n)$ and consider the sum 
\begin{align}\label{eqn:betansum}
    \sum_{P \in [P]} \prod_{i=1}^n (\bt^i - \bs^i)^{P_i} \prod_{i=1}^n (\tilde{\bt}^{i-1} - \tilde{\bs}^i)^{\rot(P^i)}.
\end{align}
Since every element $P \in [P]$ has the same configuration of cross and elbow tiles outside of the $\beta_n$ block, every term in Equation \eqref{eqn:betansum} has the common factor 
\begin{align}\label{eqn:commonfactor}
    \prod_{i=i}^{n-1} (\bt^i-\bs^i)^{P_i}\prod_{i=1}^n(\tilde{\bt}^{i-1} - \tilde{\bs}^i)^{\rot(P^i)}
\end{align}
where the $P_i$ and $P^i$ in this expression are the mini pipes dreams of an arbitrary element $P \in [P]$. Upon factoring Equation \eqref{eqn:commonfactor} from every term in Equation \eqref{eqn:betansum}, we may rewrite Equation \eqref{eqn:betansum} as 
\begin{align*}
    \left(\sum_{P_n \in \RPipes(w_n)} (\bt^n - \bs^n)^{P_n} \right) \left(   \prod_{i=i}^{n-1} (\bt^i-\bs^i)^{P_i} \prod_{i=1}^n(\tilde{\bt}^{i-1} - \tilde{\bs}^i)^{\rot(P^i)} \right).
\end{align*}
Indeed, by construction of $\RPipeNet(\mathbf{w})$, for each $P_n \in \RPipes(w_n)$ there is a unique representative of the equivalence class $[P]$ having $P_n$ in its $\beta_n$ block. Noting that the factor $\sum_{P_n \in \RPipes(w_n)} (\bt^n-\bs^n)^{P_n}$ is independent of the equivalence class $[P]$ we conclude that we may rewrite Equation \eqref{eqn:equivClasses} as
\begin{align*}
\left( \sum_{P_n \in \RPipes(w_n)} (\mathbf{t}^n-\mathbf{s}^n)^{P_n} \right)\cdot \left( \sum_{[P] \in \RPipeNet(\mathbf{w}; \beta_n)} \prod_{i=1}^{n-1} (\mathbf{t}^i-\mathbf{s}^i)^{P_i}\prod_{i=1}^n (\tilde{\mathbf{t}}^{i-1} -\tilde{\mathbf{s}}^{i})^{\rot(P^i)}\right),
\end{align*}
where the sum over $[P]$ is to be understood as in Remark \ref{rem:equivSums}. Noting that the first term in this product is preciesly $\mathfrak{S}_{w_n}(\mathbf{t};\mathbf{s})$, we can rewrite this equation as 
\begin{align*}
\mathfrak{S}_{w_n}(\mathbf{t}^n;\mathbf{s}^n) \sum_{[P] \in \RPipeNet(\mathbf{w}; \beta_n)}  \prod_{i=1}^{n-1} (\mathbf{t}^i-\mathbf{s}^i)^{P_i}\prod_{i=1}^n (\tilde{\mathbf{t}}^{i-1} -\tilde{\mathbf{s}}^{i})^{\rot(P^i)}.
\end{align*}
By breaking the sum over $\RPipeNet(\mathbf{w}; \beta_n)$ into a sum over equivalence classes in \newline $\RPipeNet(\mathbf{w}; \beta_n, \alpha_n)$ and representatives thereof (c.f. equation \eqref{eqn:equivClasses}) we can similarly factorize $\mathfrak{S}_{\text{rot}(w^n)}(\tilde{\mathbf{t}}^{n-1}; \tilde{\mathbf{s}}^{n})$ from this expression. Continuing in this fashion, we obtain \eqref{eqn:multidegree}.
\end{proof}

\section{A combinatorial proof of the bipartite $K$-theoretic component formula}

In this section we give a direct combinatorial proof of the bipartite $K$-theoretic quiver component formula, originally proven in \cite{KKR}. To do this, we will identify the set $KW(\Omega)$ of $K$-theoretic lacing diagrams for a bipartite type $A$ quiver locus $\Omega$ with a subset $X_\Omega$ of $S_\mathbf{d}$. This will allow us to factor the formula for $KQ_\Omega(\mathbf{t}/\mathbf{s})$ in a manner similar to the factorization of $\mathcal{Q}_\Omega(\mathbf{t}-\mathbf{s})$ in the proof of Theorem \ref{thm:3.9}.

\subsection{A bijection on sets of pipe dreams}

The main goal of this subsection is to prove Proposition \ref{prop:pipeDreamBijection}, which gives a bijection between $\Pipes(v_0,v(\Omega))$ and a particular product of sets of pipe dreams. We begin with some definitions and notation. Define the following subset of $S_{\bd}$: 
\begin{equation}
X_\Omega:= \{\pi(P)\mid P\in \Pipes(v_0,v(\Omega))\}.
\end{equation}

\begin{lemma}\label{prop:goodPipes}
Let $v \in S_d$ and $P\in \Pipes(v_0,v)$, with $\pi(P)\in X_{\Omega}$. Then 
$\delta(P) = v(\Omega)$, that is, $P\in \Pipes(v_0,v(\Omega))$. 
\end{lemma}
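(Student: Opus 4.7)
The hypothesis $\pi(P)\in X_\Omega$ means there is some $P'\in\Pipes(v_0,v(\Omega))$ with $\pi(P')=\pi(P)$; by Lemma \ref{lemma:3.2}, $P'\supseteq P_\ast$. My plan is to deduce $\delta(P)=\delta(P')=v(\Omega)$ by first showing that $P$ and $P'$ agree on the routing outside the snake region, and then arguing that this routing together with the shared snake-region data $\pi(P)=\pi(P')$ forces $\delta(P)=v(\Omega)$.

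The first step is to show $P_\ast\subseteq P$. I would argue by contradiction: suppose $P$ has an elbow tile at some position $(a,b)\in P_\ast$ (so $(a,b)$ lies inside the NW $d_y\times d_x$ subgrid but outside the snake region). Choosing such a position to be extremal (maximally northwest if $(a,b)$ lies NW of the snake region, maximally southeast if it lies SE of it), the two pipes meeting at $(a,b)$ are forced to turn, and by the extremality one of these turning pipes must reach the boundary of the snake region at a position different from where the corresponding pipe in $P'$ (which passes straight through the cross tile of $P_\ast$) reaches it. Tracing this discrepancy into the adjacent mini pipe dream block would show that the multiset of pipes entering that block in $P$ differs from the multiset entering in $P'$, which is incompatible with the equality of the Demazure products encoded by $\pi(P)=\pi(P')$.

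With $P\supseteq P_\ast$ in hand, I would conclude $\delta(P)=v(\Omega)$ as follows. Set $\bw := LD(\pi(P))$; by Proposition \ref{prop:followpipes} applied to $P'$ and Lemma \ref{lem:LDPerm}, $\bw=\fw(P')$, which by Proposition \ref{prop:PtoL} is a minimal lacing diagram for $\Omega$. I would then verify that $\delta(P)$ satisfies the three defining properties (\emph{Z1})--(\emph{Z3}) of the Zelevinsky permutation with respect to $\bw$, following the strategy used in the proof of Proposition \ref{prop:PtoL}: trace pipes of $P$ from the west and south boundaries; outside the snake region the cross tiles of $P_\ast$ preserve block row and column, and inside the snake region the mini pipe dream Demazure products (which equal those of $P'$) dictate how pipes permute. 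Consequently $\delta(P)$ realizes the same block counts as $v(\Omega)$ with $1$s arranged northwest-to-southeast along each block row and column, so by uniqueness of the Zelevinsky permutation $\delta(P)=v(\Omega)$.

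The main obstacle will be the first step: showing $P_\ast\subseteq P$ using only the snake-region data $\pi(P)=\pi(P')$, without any a priori knowledge of $v=\delta(P)$ itself. A subtlety is that $P$ is not assumed reduced, so a stray elbow in $P_\ast$ could in principle be ``compensated'' by a double crossing elsewhere in $P$. Ruling this out will require a careful analysis of how pipes cross the boundary between the $P_\ast$ region and the snake region, ensuring that a re-routing caused by such an elbow cannot be absorbed by the Demazure product rule $\tau_i^2=\tau_i$.
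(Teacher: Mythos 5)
There is a genuine gap, on two fronts. First, your Step 1 --- deriving $P_\ast\subseteq P$ from $\pi(P)=\pi(P')$ --- cannot work, because the implication is false for pipe dreams on which $\pi$ is merely ``extracted'': if you take any $P'\in\Pipes(v_0,v(\Omega))$ and delete a single cross tile of $P_\ast$ from it, the mini pipe dreams $P_k,P^k$ are unchanged, so $\pi$ is unchanged and still lies in $X_\Omega$, yet the resulting pipe dream does not contain $P_\ast$ (and its Demazure product is no longer $v(\Omega)$). This is exactly the ``compensation'' worry you raise at the end, and it is fatal to the strategy rather than a technicality: the containment $P_\ast\subseteq P$ is not a consequence of the hypothesis but part of it, since $\pi$ is only defined on $\Pipes(\bd)=\{P: P\supseteq P_\ast\}$. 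The lemma should be read with that standing assumption, and no Step 1 is needed.

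Second, Step 2 is also flawed. You invoke Proposition \ref{prop:PtoL} to conclude that $\bw=LD(\pi(P))=\fw(P')$ is a \emph{minimal} lacing diagram, but $X_\Omega$ is defined using \emph{all} of $\Pipes(v_0,v(\Omega))$, so the witness $P'$ need not be reduced; then $\fw(P')$ is only a $K$-theoretic lacing diagram, which (as noted after Corollary \ref{lem:partialperm}) need not lie in $\Omega^\circ$ and need not have the lace counts that determine the blocks of $v(\Omega)$ via (\emph{Z1})--(\emph{Z2}). More fundamentally, for a non-reduced $P$ the permutation $\delta(P)$ is not computed by following pipes, so checking (\emph{Z1})--(\emph{Z3}) by pipe-tracing does not pin down $\delta(P)$. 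The paper avoids all of this by a purely algebraic argument: using the Fomin--Kirillov column-by-column reading order, $\delta(P)$ factors as a fixed Demazure product $(w^1w_1\varepsilon_1)(\varepsilon^2w^2w_2\varepsilon_2)\cdots(\varepsilon^nw^nw_n)$ in which the $\varepsilon$'s come from $P_\ast$ and the $w$'s depend only on $\pi(P)$; since $\pi(P)=\pi(P')$ for some $P'\in\Pipes(v_0,v(\Omega))$, the two products coincide and $\delta(P)=v(\Omega)$. I would redo your proof along those lines.
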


\begin{proof}
Our proof is essentially the same as \cite[Proposition 7]{Miller}. Let $T^k$ be the pipe dream on a $d_y\times d_x$ grid which has cross tiles in all positions strictly north of the $\alpha_k$ block of the snake region and elbows elsewhere, and let $T_k$ be the pipe dream on a $d_y\times d_x$ grid which has cross tiles in all positions strictly south of the $\beta_k$ block of the snake region and elbows elsewhere. Let $\varepsilon^k = \delta(T^k)$ and $\varepsilon_k = \delta(T_k)$.

Let $P\in \Pipes(v_0, v)$. Let $\tilde{P}^k$ be the sub pipe dream on a $d_y\times d_x$ grid which is equal to $P$ on the $\alpha_k$ block of the snake region and has elbow tiles in all other positions, and let $\tilde{P}_k$ be the sub pipe dream on a $d_y\times d_x$ grid which is equal to $P$ on the $\beta_k$ block of the snake region and has elbows in all other positions.\footnote{Note that $\tilde{P}^k$ and $\tilde{P}_k$ are different from the mini pipe dreams $P^k$ and $P_k$ obtained from $P$ (see Section \ref{sec:pipestolaces}) since $\tilde{P}^k$ and $\tilde{P}_k$ are pipe dreams on the full $d_y\times d_x$ grid.} Let $w^k = \delta(\tilde{P}^k)$ and let $w_k = \delta(\tilde{P}_k)$. 
We will first show that $\delta(P)$ is equal to the Demazure product
  \begin{equation}\label{eq:factorization}
  (w^1w_1\varepsilon_1)\left[(\varepsilon^2w^2w_2\varepsilon_2)\cdots (\varepsilon^k w^{k}w_{k}\varepsilon_{k})\cdots (\varepsilon^{n-1}w^{n-1}w_{n-1}\varepsilon_{n-1})\right](\varepsilon^n w^nw_n).
\end{equation}

Let $P(x_k)$ be the sub pipe dream of $P$ which agrees with $P$ on block column $x_k$ and has elbow tiles elsewhere. 
By results from \cite{FK1} (see also \cite[Proposition 7]{Miller}), $\delta(P)$ may be computed by first reading from top to bottom down the rightmost column, then from top to bottom down the second to rightmost column, then continuing to move to the west in this fashion. Consequently, we see that
\[
\delta(P) = \delta(P(x_1))\delta(P(x_2))\cdots \delta(P(x_n)).
\]
Then, using the ordinary reading order for Demazure product, we see that
\begin{equation}
\delta(P(x_1)) = w^1w_1\varepsilon_1, \quad \delta(P(x_k)) = \varepsilon^k w^{k}w_{k}\varepsilon_{k}, \text{ for }2\leq k\leq n-1, \quad \delta(P(x_n)) = \varepsilon^n w^nw_n,
\end{equation}
and thus \eqref{eq:factorization} follows.

Now, if $\pi(P)\in X_{\Omega}$, then there is some $Q\in \Pipes(v_0,v(\Omega))$ satisfying $\pi(Q) = \pi(P)$. In other words, $\delta(P_k) = \delta(Q_k)$ for all $k$ and $\delta(\text{rot}(P^k)) = \delta(\text{rot}(Q^k))$ for all $k$, which implies that $\delta(P^k) = \delta(Q^k)$. It follows that $\delta(\tilde{P}_k) = \delta(\tilde{Q}_k)$ and $\delta(\tilde{P}^k) = \delta(\tilde{Q}^k)$, and so the lemma follows  from \eqref{eq:factorization}. 
\end{proof}

For $\mathbf{v}=\pi(P) \in X_\Omega$ recall that we have $v_i =\delta(P_i)$ and $v^i=\delta(\rot(P^i))$. We are now ready to prove the main result of this subsection.

\begin{proposition}\label{prop:pipeDreamBijection}
The pipe network map
\begin{equation}\label{eq:PN}
\bigcup_{\bv\in X_\Omega} \left(\prod_{i=1}^n (\Pipes(v_i)\times \Pipes(v^i))\right) \xrightarrow{PN} \Pipes(v_0, v(\Omega))
\end{equation}
sending a tuple of pipe dreams $(P_n, P^n, \dots, P_1, P^1)$ to the pipe dream in $\Pipes(v_0, v(\Omega))$ having $P_k$ in its $\beta_k$ block and $\text{rot}(P^k)$ in its $\alpha_k$ block for each $k$ is a bijection.
\end{proposition}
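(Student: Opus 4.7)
The plan is to exhibit an explicit two-sided inverse to $PN$, with well-definedness being the only nontrivial check.

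First, I will construct the candidate inverse. Given $P\in\Pipes(v_0,v(\Omega))$, set $\bv=\pi(P)$, which lies in $X_\Omega$ by definition of that set. Extract the mini pipe dreams $P_i$ from block $\beta_i$ and $P^i$ from block $\alpha_i$ of $P$. Since $v_i=\delta(P_i)$ and $v^i=\delta(\rot(P^i))$ directly from the definition of $\pi$, we have $P_i\in\Pipes(v_i)$ and $\rot(P^i)\in\Pipes(v^i)$, so the tuple $(P_n,P^n,\dots,P_1,P^1)$ lies in the summand of the disjoint union on the left of \eqref{eq:PN} indexed by $\bv$. This defines a candidate inverse.

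Second, I will verify that $PN$ itself is well-defined. Given a tuple $(P_n,P^n,\dots,P_1,P^1)$ in the summand indexed by some $\bv\in X_\Omega$, build $P$ on the $d\times d$ grid by placing $P_k$ in block $\beta_k$, $\rot(P^k)$ in block $\alpha_k$, cross tiles at every position of $P_*$, and elbow tiles everywhere else. All cross tiles of $P$ lie in the northwest $d_y\times d_x$ subgrid, so $P\in\Pipes(v_0,\delta(P))$. By construction $\pi(P)=\bv\in X_\Omega$, so Lemma \ref{prop:goodPipes} forces $\delta(P)=v(\Omega)$, and hence $P\in\Pipes(v_0,v(\Omega))$.

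Third, I will check that the two maps are mutual inverses. Applying $PN$ to a tuple and then extracting mini pipe dreams reproduces the original tuple, since the snake region of the constructed $P$ is by definition the concatenation of the $P_k$ and $\rot(P^k)$. Conversely, let $P\in\Pipes(v_0,v(\Omega))$ and let $P'$ be the result of extracting the mini pipe dreams and reassembling via $PN$. Then $P$ and $P'$ agree on the snake region by construction, and outside the snake region but inside the northwest $d_y\times d_x$ subgrid we have $P'=P_*$ while $P=P_*$ as well: Lemma \ref{lemma:3.2} gives $P_*\subseteq P$, and since $P_*$ already occupies every position in this region with cross tiles there is no room for additional crosses in $P$. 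Outside the $d_y\times d_x$ subgrid both consist solely of elbows by the definition of $\Pipes(v_0,\cdot)$. Hence $P=P'$.

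The only substantive step is the well-definedness of $PN$, whose crux is Lemma \ref{prop:goodPipes}: block-wise Demazure products together with the fixed exterior $P_*$ determine the global Demazure product. Every other step is direct bookkeeping from the definitions of $\pi$, $X_\Omega$, and the mini pipe dream extraction.
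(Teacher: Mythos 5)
Your proof is correct and takes essentially the same route as the paper: well-definedness via Lemma \ref{prop:goodPipes}, and inversion by extracting mini pipe dreams (the paper phrases this as surjectivity plus injectivity-by-construction, while you package it as an explicit two-sided inverse, which is the same content). The only detail the paper includes that you elide is the observation, via Corollary \ref{lem:partialperm} and Lemma \ref{lem:completedPipes}, that the elements of $\Pipes(v_i)$ and $\Pipes(v^i)$ --- a priori pipe dreams on square grids --- have all their cross tiles confined to the rectangular northwest subgrids, so that they actually fit into the $\beta_i$ and $\alpha_i$ blocks of the snake region.
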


\begin{proof}
Note that by Corollary \ref{lem:partialperm}, we have that each $v_i$ is the completion of some $\bd(y_i)\times\bd(x_i)$ partial permutation, and each $v^i$ is the completion of a rotation of a $\bd(y_{i-1}) \times \bd(x_i)$ partial permutation. Thus, by Lemma \ref{lem:completedPipes}, every $P \in \Pipes(v_i)$ can be written on a $\bd(y_i)\times \bd(x_i)$ grid, and every $P \in \Pipes(v^i)$ can be written on a $\bd(y_{i-1})\times \bd(x_i)$ grid. Therefore the pipe dreams $(P_n, P^n, \dots, P_1, P^1)$ are of the correct sizes to be embedded into the snake region. Together with Lemma \ref{prop:goodPipes}, we see that the map $PN$ takes the domain of \eqref{eq:PN} to $\Pipes(v_0,v(\Omega))$, and therefore this map is well defined. 
To see that the map $PN$ is surjective, take any $P\in \Pipes(v_0, v(\Omega))$ and extract its mini pipe dreams $(P_n,P^n,\dots, P_1,P^1)$. Then the tuple $(P_n, \text{rot}(P^n),\dots, P_1,\text{rot}(P^1))$ is in the domain of the $PN$ map since $\pi(P) \in X_\Omega$ by definition of $X_\Omega$, and furthermore, its pipe network is $P$. Thus $PN$ is surjective.
Injectivity is clear by construction.
\end{proof}

We end this subsection with a characterization of $\bv\in X_{\Omega}$ in terms of factorizations of the Zelevinsky permutation $v(\Omega)$, in analogy with KMS factorizations from the equioriented setting (see \cite[\S6]{MR2114821}). For $1\leq k\leq n$, let $a_k = \sum_{i> k}\bd(x_i)+\sum_{i< k-1}\bd(y_i)$ and $b_k = \sum_{i>k}\bd(x_i)+\sum_{i<k}\bd(y_i)$.
Observe that the northwest corner of the $\alpha_k$ block of the snake region is on the $(a_k+1)^{\rm{th}}$ diagonal of the $d_y\times d_x$ grid and the northwest corner of the $\beta_k$ block is on the $(b_k+1)^{\rm{th}}$ diagonal. If $v$ is a permutation in $S_n$ and $m$ is a positive integer, let $1^m\times v\in S_{n+m}$ be the permutation satisfying $(1^m\times v)(i) = i$, $i\leq m$, and $(1^m\times v)(i) = v(i-m)+m$, $i>m$. 
Finally, let $R^k$ be the pipe dream on a $d_y\times d_x$ grid which has cross tiles in all positions strictly west of the $\alpha_k$ block of the snake region and elbows elsewhere, and let $R_k$ be the pipe dream on a $d_y\times d_x$ grid which has cross tiles in all positions strictly east of the $\beta_k$ block of the snake region. Let $\theta^k = \delta(R^k)$ and $\theta_k = \delta(R_k)$.

\begin{lemma}\label{lem:KMSfactorization}
Let $\bv = (v_n,v^n,\dots, v_1,v^1)\in S_{\bd}$, let $\varepsilon^k, \varepsilon_k$ be as defined in the proof of Lemma \ref{prop:goodPipes}, let $a_k, b_k, \theta^k, \theta_k$ be as defined above, and let $w_k = 1^{b_k} \times v_k$ and $w^k = 1^{a_k}\times \text{rot}((v^k)^{-1}) $. Then the following are equivalent:
\begin{enumerate}
\item $\bv\in X_{\Omega}$. 
\item $v(\Omega)$ is equal to the Demazure product
  \begin{equation}\label{eq:factorization2}
  (w^1w_1\varepsilon_1)\left[(\varepsilon^2w^2w_2\varepsilon_2)\cdots (\varepsilon^k w^{k}w_{k}\varepsilon_{k})\cdots (\varepsilon^{n-1}w^{n-1}w_{n-1}\varepsilon_{n-1})\right](\varepsilon^n w^nw_n).
\end{equation}
\item $v(\Omega)$ is equal to the Demazure product 
\begin{equation}\label{eq:factorization3}
(w^1\theta^1)(w_1w^2\theta^2)\left[(\theta_2 w_2w^{3}\theta^3)\cdots (\theta_{n-2} w_{n-2}w^{n-1}\theta^{n-1})\right](\theta_{n-1}w_{n-1}w^n)(\theta_nw_n).
\end{equation}
\end{enumerate}
\end{lemma}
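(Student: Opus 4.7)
The first step is to identify the permutations $w_k$ and $w^k$ defined in the statement with the Demazure products $\delta(\tilde{P}_k)$ and $\delta(\tilde{P}^k)$ appearing in the proof of Lemma \ref{prop:goodPipes}, for any pipe dream $P$ with $\pi(P)=\bv$. Since the $\beta_k$ block of the $d_y\times d_x$ grid has its northwest corner on the $(b_k+1)$-st diagonal, each cross of the mini pipe dream $P_k$ at position $(i,j)$ contributes $\tau_{b_k+i+j-1}$ to $\delta(\tilde{P}_k)$, giving $\delta(\tilde{P}_k)=1^{b_k}\times v_k=w_k$. The analogous calculation, combined with the identity $\delta(\rot(P^k))=\rot(\delta(P^k)^{-1})$ (which forces $\delta(P^k)=\rot((v^k)^{-1})$), gives $\delta(\tilde{P}^k)=1^{a_k}\times \rot((v^k)^{-1})=w^k$.

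To establish $(1)\Leftrightarrow(2)$, I argue as follows. If $\bv\in X_\Omega$, pick any $P\in \Pipes(v_0,v(\Omega))$ with $\pi(P)=\bv$; the factorization \eqref{eq:factorization} from the proof of Lemma \ref{prop:goodPipes}, combined with the identifications above, expresses $v(\Omega)=\delta(P)$ as the right-hand side of \eqref{eq:factorization2}. Conversely, assuming \eqref{eq:factorization2}, Proposition \ref{prop:pipeDreamBijection} lets me build a pipe dream $P$ by inserting any choice of mini pipe dreams $P_k\in \Pipes(v_k)$ and $P^k$ with $\rot(P^k)\in \Pipes(v^k)$ into the snake region alongside $P_*$; then $\pi(P)=\bv$ and \eqref{eq:factorization} identifies $\delta(P)$ with the right-hand side of \eqref{eq:factorization2}, which equals $v(\Omega)$ by assumption, so $P\in \Pipes(v_0,v(\Omega))$ and $\bv\in X_\Omega$.

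For $(2)\Leftrightarrow(3)$, both expressions compute $\delta(P)$ for the same pipe dream $P$, but via two different reading orders: \eqref{eq:factorization2} comes from the column-by-column order used in the proof of Lemma \ref{prop:goodPipes}, while \eqref{eq:factorization3} arises from the standard row-by-row reading (rows top to bottom, right to left within each row). Tracing $P$ along block row $y_k$ from east to west, one encounters in succession crosses of $P_*$ east of $\beta_k$, the $\beta_k$ block, the $\alpha_{k+1}$ block, and crosses of $P_*$ west of $\alpha_{k+1}$; these contribute $\theta_k$, $w_k$, $w^{k+1}$, $\theta^{k+1}$ respectively. Assembling these factors across all block rows, and noting that $\theta_1$ and $\theta^n$ are trivial (since $\beta_1$ lies in the rightmost column and $\alpha_n$ in the leftmost), produces exactly \eqref{eq:factorization3}. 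The main technical point is the row-sweep bookkeeping; once it is in place, identifying the $P_*$-crosses in each row and column with the appropriate $\theta$- or $\varepsilon$-factor is routine, and both readings yield the same Demazure product.
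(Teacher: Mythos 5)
Your proof is correct and takes essentially the same route as the paper: the equivalence $(1)\Leftrightarrow(2)$ is extracted from the column-reading factorization \eqref{eq:factorization} in the proof of Lemma \ref{prop:goodPipes} together with the identifications $\delta(\tilde{P}_k)=1^{b_k}\times v_k$ and $\delta(P^k)=\rot((v^k)^{-1})$, and the third factorization comes from the analogous block-row reading (the paper phrases this as $(1)\Leftrightarrow(3)$ ``analogously'' rather than your $(2)\Leftrightarrow(3)$). One small caution: in the converse direction you should appeal only to the pipe-network construction $PN$ itself, as the paper does, rather than to the bijectivity statement of Proposition \ref{prop:pipeDreamBijection}, whose domain is restricted to $\bv\in X_\Omega$ and whose invocation there would be circular.
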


\begin{proof}
We prove that (1) and (2) are equivalent. The proof that (1) and (3) are equivalent is analogous. First suppose that $\bv\in X_\Omega$. Let $P\in \Pipes(v_0,v(\Omega))$ with $\pi(P) = \bv$. 
By the argument given in the proof of Lemma \ref{prop:goodPipes}, we see that $v(\Omega)$ is equal to the Demazure product \eqref{eq:factorization2} so long as $\delta(P_k) = v_k\in S_{\beta_k}$ and $\delta(P^k) = \text{rot}((v^k)^{-1})\in S_{\alpha_k}$. But this follows from the assumption $\pi(P) = \bv$ together with the fact that if $\delta(\text{rot}(P^k)) = v^k\in S_{\alpha_k}$, then $\delta(P^k) = \text{rot}((v^k)^{-1})$. For the converse, let $Q  = (Q_n,Q^n,\dots, Q_1,Q^1)$ with $Q^k\in \Pipes(v^k)$, $Q_k\in \Pipes(v_k)$. Then $\text{rot}(Q^k)$ is a pipe dream for $\text{rot}((v^k)^{-1})$. So, again by the argument from the proof of Lemma \ref{prop:goodPipes}, $\delta(PN(Q))$ is equal to the Demazure product from \eqref{eq:factorization2}, which by assumption is $v(\Omega)$. In other words, $PN(Q)\in \Pipes(v_0,v(\Omega))$ and so $\bv = \pi(PN(Q))\in X_\Omega$ by definition of $X_{\Omega}$. 
\end{proof}

\subsection{Bijection from $X_{\Omega}$ to $K$-theoretic lacing diagrams}

By Lemma \ref{lem:LDPerm} we have the diagram 
\begin{center}
    \begin{tikzcd}
        \, & X_\Omega \ar[r, hook] \ar[d, dotted, "LD"] & S_{\bd} \ar[d, "LD", hook] \\
       W(\Omega) \ar[r, hook] & KW(\Omega) \ar[r, hook] & \Laces(\bd) & \, 
    \end{tikzcd}
\end{center}
In this subsection we prove that $LD$ restricts to a bijection from $X_\Omega$ to $KW(\Omega)$, the collection of $K$-theoretic lacing diagrams for $\Omega$; i.e. the dotted arrow in the diagram is a bijection.

The following is the bipartite analogue of the remark at the end of \cite[\S6]{MR2114821}. This lemma interprets the $K$-theoretic lacing diagram moves shown below in terms of the associated elements in $X_\Omega$. Recall that, for these $K$-theoretic moves, both of the dots in the middle column are non-virtual and are adjacent in their column, and at least one dot in each of the outer columns is non-virtual.

\begin{center}
\begin{tikzpicture}
\node (1) at (-3,1) {$\bullet$}; 
\node (2) at (-3,0) {$\bullet$};
\node (3) at (-2,1) {$\bullet$};
\node (4) at (-2,0) {$\bullet$};
\node (5) at (-1,1) {$\bullet$};
\node (6) at (-1,0) {$\bullet$};

\node (arrow) at (0,.5) {$\longleftrightarrow$};

\node (66) at (3,1) {$\bullet$}; 
\node (55) at (3,0) {$\bullet$};
\node (44) at (2,1) {$\bullet$};
\node (33) at (2,0) {$\bullet$};
\node (22) at (1,1) {$\bullet$};
\node (11) at (1,0) {$\bullet$};

\node (arrow2) at (4, .5) {$\longleftrightarrow$};

\node (666) at (7,1) {$\bullet$}; 
\node (555) at (7,0) {$\bullet$};
\node (444) at (6,1) {$\bullet$};
\node (333) at (6,0) {$\bullet$};
\node (222) at (5,1) {$\bullet$};
\node (111) at (5,0) {$\bullet$};

\draw[-, thick] (1) -- (4);
\draw[-, thick] (2) -- (3);
\draw[-, thick] (3) -- (5);
\draw[-, thick] (4) -- (6);

\draw[-, thick] (11) -- (44);
\draw[-, thick] (22) -- (33);
\draw[-, thick] (33) -- (66);
\draw[-, thick] (44) -- (55);

\draw[-, thick] (111) -- (333);
\draw[-, thick] (222) -- (444);
\draw[-, thick] (333) -- (666);
\draw[-, thick] (444) -- (555);
\end{tikzpicture}
\end{center}

\begin{lemma} \label{lem:KTheoreticPermutations}
Let $\bv = (v_n,v^n,\dots, v_1,v^1)\in S_{\bd}$. 
\begin{enumerate}
\item Suppose $1\leq i\leq n$ and $1\leq k< \bd(x_i)$. Let $k' = \bd(y_{i-1})+k$, and suppose that $v_i^{-1}(k)<v_i^{-1}(k+1)$ and $(\rot(v^i))^{-1}(k')<(\rot(v^i))^{-1}(k'+1)$. Then, if one of the following is in $X_{\Omega}$, all three are in $X_{\Omega}$: 
	\begin{enumerate}
	\item $(v_n,v^n,\dots, \tau_kv_i , v^i,\dots, v_1,v^1),$
	\item $(v_n,v^n,\dots, v_i, \rot(\tau_{k'}\rot(v^i)),\dots, v_1,v^1),$
	\item $(v_n,v^n,\dots,\tau_k v_i, \rot(\tau_{k'}\rot(v^i)),\dots, v_1,v^1)$.
	\end{enumerate}
\item Suppose $1\leq i\leq n-1$ and $1\leq l< \bd(y_i)$. Let $l' = \bd(x_{i+1})+l$, and suppose that $v_i(l)<v_i(l+1)$ and $v^{i+1}(l')<v^{i+1}(l'+1)$. Then, if one of the following is in $X_{\Omega}$, all three are in $X_{\Omega}$: 
	\begin{enumerate}
	\item $(v_n,v^n,\dots v^{i+1}, v_i \tau_l,\dots, v_1,v^1)$,
	\item $(v_n,v^n,\dots, \rot(\rot(v^{i+1})\tau_{l'}), v_i,\dots, v_1,v^1)$, 
	\item $(v_n,v^n,\dots, \rot(\rot(v^{i+1})\tau_{l'}), v_i \tau_l,\dots, v_1,v^1)$. 
	\end{enumerate}
    \item Let $(v_n, v^n, \dots, v_1, v^1) \in X_{\Omega}$. Then, $\rot(v^1)(k')<\rot(v^1)(k'+1)$ for every $d(x_1)+1 \leq k' < d(x_1)+d(y_0)$, and $v_n(j) < v_n(j+1)$ for every $1 \leq j < d(y_n)$.
\end{enumerate}
\end{lemma}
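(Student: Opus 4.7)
All three parts will be proved using Lemma \ref{lem:KMSfactorization}, which characterizes $X_\Omega$ as the set of tuples whose associated Demazure product factorization equals $v(\Omega)$.

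For parts (1) and (2), the first task is to verify a shift identity. In part (1), since $b_i - a_i = \bd(y_{i-1})$ and $k' = \bd(y_{i-1}) + k$, one has $a_i + k' = b_i + k$; analogously, in part (2) one has $a_{i+1} + l' = b_i + l$. With these identities in hand, I will analyze how each of the modifications (a), (b), (c) alters the Demazure factorization of Lemma \ref{lem:KMSfactorization}. For part (1) I will use factorization \eqref{eq:factorization2}, whose factor containing $w^i$ and $w_i$ is $\varepsilon^i w^i w_i \varepsilon_i$. A direct calculation using the identity $\rot(\tau_j) = \tau_{m - j}$ shows that modifications (a) and (b) both replace this factor by $\varepsilon^i w^i \tau_{b_i + k} w_i \varepsilon_i$, and modification (c) inserts $\tau_{b_i + k}$ twice between $w^i$ and $w_i$, which collapses to a single insertion by the Demazure idempotence $\tau_j^2 = \tau_j$. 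Thus (a), (b), (c) produce the same modified Demazure product, so by Lemma \ref{lem:KMSfactorization} they are equivalent as conditions for membership in $X_\Omega$. The hypothesized ascent conditions on $v_i^{-1}$ and $(\rot(v^i))^{-1}$ guarantee that the transpositions actually modify the underlying permutations rather than cancel trivially. Part (2) is proved identically using factorization \eqref{eq:factorization3}, whose factor $(\theta_i w_i w^{i+1} \theta^{i+1})$ contains $w_i$ and $w^{i+1}$ adjacent, with the roles of left- and right-multiplication exchanged.

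For part (3), the strategy combines Lemma \ref{lem:KMSfactorization} with property (Z3) of the Zelevinsky permutation. For the $v_n$ claim, fix $1 \leq j < \bd(y_n)$ and set $p = b_n + j$. Property (Z3) applied to block row $y_n$ places the $1$s of $v(\Omega)$ northwest to southeast, forcing $v(\Omega)(p) < v(\Omega)(p+1)$; that is, $v(\Omega)$ has a right ascent at $p$. If one had $v_n(j) > v_n(j+1)$, then $w_n = 1^{b_n} \times v_n$ would have a right descent at $p$, so the Demazure product $w_n \tau_p$ would equal $w_n$. Since $w_n$ is the rightmost factor in \eqref{eq:factorization2}, associativity of the Demazure product would then give $v(\Omega) \tau_p = v(\Omega)$, contradicting the ascent; hence $v_n(j) < v_n(j+1)$. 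The $\rot(v^1)$ claim is the mirror: property (Z3) in block column $y_0$ forces a left ascent of $v(\Omega)$ at each $d_x + l$ with $1 \leq l < \bd(y_0)$. Using factorization \eqref{eq:factorization3} with leftmost factor $(w^1 \theta^1)$ and $w^1 = 1^{a_1} \times (\rot(v^1))^{-1}$, one translates a potential right descent of $\rot(v^1)$ at $\bd(x_1) + l$ into a left descent of $w^1$ at $a_1 + \bd(x_1) + l = d_x + l$; the associativity argument then produces the analogous contradiction.

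The main obstacle is bookkeeping: confirming the shift identities $a_i + k' = b_i + k$ and $a_{i+1} + l' = b_i + l$, and in part (3) carefully tracking how the rotation and inversion operations interact with the embedding $1^c \times (-)$ so that a descent of $\rot(v^1)$ can be read off as a left descent of $w^1$. Once these translations are set up, each argument reduces to a short application of Demazure idempotence (parts 1 and 2) or associativity combined with (Z3) (part 3).
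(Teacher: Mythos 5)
Your proof is correct. For parts (1) and (2) your route is essentially the paper's: both reduce to Lemma \ref{lem:KMSfactorization}, use the shift identities $a_i+k'=b_i+k$ and $a_{i+1}+l'=b_i+l$ to see that modifications (a) and (b) insert the same $\tau_{b_i+k}$ (resp.\ $\tau_{b_i+l}$) between the adjacent factors $w^i,w_i$ of \eqref{eq:factorization2} (resp.\ $w_i,w^{i+1}$ of \eqref{eq:factorization3}), and collapse the double insertion in (c) by Demazure idempotence; I checked that $\rot(\tau_{k'}\rot(v^i))$ does produce $w^i\tau_{a_i+k'}$, so the bookkeeping goes through. One point worth making explicit (the paper is also silent on it) is that the ascent hypotheses are what guarantee the ordinary products $\tau_{k''}w_i$ and $w^i\tau_{k''}$ coincide with the corresponding Demazure products, so that the modified tuple's factorization really is the old one with $\tau_{k''}$ inserted. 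For part (3) you take a genuinely different route: the paper picks a pipe dream $P$ with $\pi(P)=\bv$ and argues that pipes entering $\beta_n$ from the west (resp.\ exiting $\alpha_1$ in block row $y_0$) cannot cross without violating the Zelevinsky pattern, whereas you argue purely algebraically that a descent of $v_n$ (resp.\ of $\rot(v^1)$) would, via the extreme factor $w_n$ of \eqref{eq:factorization2} (resp.\ $w^1$ of \eqref{eq:factorization3}) and associativity of the Demazure product, force a right (resp.\ left) descent of $v(\Omega)$ at a position where (\emph{Z3}) guarantees an ascent. Your version has the advantage of being uniform with parts (1)--(2) and of avoiding any appeal to pipe dreams; the paper's version is shorter once the pipe-dream dictionary is in place. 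Both are valid.
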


\begin{proof}
We use the notation introduced in Lemma \ref{lem:KMSfactorization}. 
By Lemma \ref{lem:KMSfactorization}, $\bv\in X_{\Omega}$ if and only if \eqref{eq:factorization2} holds. By associativity of Demazure product, to prove (1), it suffices to observe that the following three Demazure products agree:
\begin{itemize}
\item $(1^{a_i}\times w_0(v^i)^{-1}w_0)(1^{b_i} \times \tau_kv_i)$,  
\item $(1^{a_i}\times w_0(w_0\tau_{k'}(w_0v^iw_0)w_0)^{-1}w_0)(1^{b_i} \times v_i)$, and
\item $(1^{a_i}\times w_0(w_0\tau_{k'}(w_0v^iw_0)w_0)^{-1}w_0)(1^{b_i} \times \tau_kv_i)$, 
\end{itemize}where $w_0$ is the longest permutation in $S_{\bd(y_i)+\bd(x_i)}$. (That is, the permutation matrix of $w_0$ has $1$s along the antidiagonal and $0$s elsewhere.) 
But this follows by noting that we have the following equality of permutations
\[
1^{a_i}\times w_0(w_0\tau_{k'}(w_0v^iw_0)w_0)^{-1}w_0 = 1^{a_i}\times (w_0(v^i)^{-1}w_0\tau_{k'}) = (1^{a_i}\times w_0(v^i)^{-1}w_0)\tau_{k''}
\]
where $k'' = k'+a_i = k+b_i$, and that we also have $1^{b_i} \times \tau_kv_i = \tau_{k''}(1^{b_i}\times v_i)$. 

Similarly, to prove (2), we use that $\bv\in X_{\Omega}$ if and only if \eqref{eq:factorization3} holds. So, it suffices to observe the following three Demazure products agree:  
\begin{itemize}
    \item $(1^{b_i}\times v_i\tau_l)(1^{a_{i+1}}\times w_0(v^{i+1})^{-1}w_0)$,
    \item $(1^{b_i}\times v_i)(1^{a_{i+1}}\times w_0(w_0(w_0v^{i+1}w_0)\tau_{l'}w_0)^{-1}w_0)$, and  
    \item $(1^{b_i}\times v_i\tau_l)(1^{a_{i+1}}\times w_0(w_0(w_0v^{i+1}w_0)\tau_{l'}w_0)^{-1}w_0)$.
    \end{itemize}
    
This follows since
\[
1^{a_{i+1}}\times w_0(w_0(w_0v^{i+1}w_0)\tau_{l'}w_0)^{-1}w_0) = 1^{a_{i+1}}\times(\tau_{l'}w_0v^{i+1}w_0) = \tau_{l''}(1^{a_{i+1}}\times (w_0(v^{i+1})^{-1}w_0),
\]
where $l'' = l'+a_{i+1} = l+b_i$, and since $1^{b_i}\times (v_i\tau_l) = (1^{b_i} \times v_i)\tau_{l''}$. 

Finally, we prove part $(3)$. 
Let $\bv\in X_\Omega$ and let $P \in \Pipes(v_0, v(\Omega))$ be any pipe dream with $\pi(P)= \bv$. Since $P$ is a pipe dream for the Zelevinsky permutation $v(\Omega)$, and $1$s appear northwest to southeast across block rows and down block columns in $v(\Omega)$, two pipes which enter the $\beta_n$ block of the snake region on the left of the block (i.e., in the block row labeled by $y_n$), do not cross. Since $v_n = \delta(P_n)$, this implies that $v_n(j) < v_n(j+1)$ for every $1 \leq j < d(y_n)$. Similarly, two pipes which exit the $\alpha_1$ block at the right of the block (i.e., in the $y_0$ block row) do not cross. Since $v^1 = \delta(\rot(P^1))$, this implies that $(v^1)(k') < v^1(k'+1)$ for every $1 \leq k < d(y_0)$, which implies that $\rot(v^1)(k')<\rot(v^1)(k'+1)$ for every $d(x_1)+1 \leq k' < d(x_1)+d(y_0)$.
\end{proof}

\begin{remark}\label{rem:KMSconditions}
    On \cite[Page 318]{BFR}, two conditions on equioriented lacing diagrams are stated. The authors then prove that the collection KMS-factorizations for an equioriented type A-quiver locus satisfy these two conditions. Parts $(1)$ and $(2)$ of Lemma \ref{lem:KTheoreticPermutations} show that $X_\Omega$, our bipartite version of KMS-factorizations, satisfies the bipartite analogue of condition $(\text{II})$ from \cite[Page 318]{BFR}, while part $(3)$ shows that elements of $X_{\Omega}$ satisfy the bipartite analogue of condition $(\text{I})$ in \emph{loc. cit.}. 
\end{remark}

We will refer to an operation interchanging any of $1(a), 1(b),$ or $1(c)$ in Lemma \ref{lem:KTheoreticPermutations} as a ``move'' applied to an element of $S_\bd$ satisfying the assumptions of Lemma \ref{lem:KTheoreticPermutations} (1). We similarly refer to an operation interchanging any of the permutations in $2(a), 2(b),$ or $2(c)$ as a ``move'' applied to an element of $S_\bd$ satisfying the assumptions of Lemma \ref{lem:KTheoreticPermutations} (2). We will next see that performing these moves is equivalent to performing a $K$-theoretic lace diagram move to a corresponding completed lacing diagram.

\begin{lemma}\label{lem:KThEquivariance}
    Let $\bw, \bw' \in KW(\Omega)$. Then, the extended lacing diagram of $\bw'$ can be obtained from the extended lacing diagram of $\bw$ via a single $K$-theoretic lacing diagram move if and only if $\fc(\bw)$ and $\fc(\bw')$ are related by one of the moves listed in Lemma \ref{lem:KTheoreticPermutations}.
\end{lemma}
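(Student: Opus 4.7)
The plan is to verify the biconditional by a direct local analysis of each type of move. Any $K$-theoretic lacing diagram move touches only three consecutive columns, so it modifies at most two of the partial permutations in $\bw = (w_n, w^n, \dots, w_1, w^1)$, and correspondingly only two of the entries of $\fc(\bw) \in S_\bd$ change. I will split into two cases according to whether the middle column of the move is a sink $x_i$ or an interior source $y_i$, which will align with parts (1) and (2) of Lemma \ref{lem:KTheoreticPermutations} respectively.

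Consider first the sink case, with the two middle (non-virtual) dots at positions $k, k+1$ from the top of column $x_i$. The three configurations appearing in a $K$-theoretic move arise from (a) swapping columns $k, k+1$ of $w_i$, (b) swapping columns $k, k+1$ of $w^i$, or (c) doing both simultaneously. Using that $v_i = c(w_i)$ has $w_i$ as its northwest block and $v^i = c(\rot(w^i))$ has $\rot(w^i)$ as its northwest block, together with the matrix identity $\rot(A) = w_0 A w_0$ and the resulting conjugation relation $w_0 \tau_{k'} w_0 = \tau_{(\bd(y_{i-1})+\bd(x_i))-k'}$, I will verify that these three elementary swaps correspond exactly to $v_i \mapsto \tau_k v_i$ and $v^i \mapsto \rot(\tau_{k'}\rot(v^i))$ with $k' = \bd(y_{i-1})+k$, which are the three options of Lemma \ref{lem:KTheoreticPermutations}(1). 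The ascent hypotheses in that lemma translate precisely to the requirement that the configuration before the move has no crossing on the given side, and combining this with the explicit description of the northwest blocks of $v_i$ and of $\rot(v^i)$ also accounts for the requirement that both middle dots be non-virtual. The source case is handled by the same strategy, now with row-swaps of $w_i$ and $w^{i+1}$ in the role of the column-swaps above, producing the operations in part (2) of Lemma \ref{lem:KTheoreticPermutations}.

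The backward direction in either case is essentially formal: I apply the truncation map $LD$ from Lemma \ref{lem:LDPerm} to the three configurations (a), (b), (c) in Lemma \ref{lem:KTheoreticPermutations} and observe that the resulting lacing diagrams differ by exactly one $K$-theoretic lacing diagram move, with the ascent hypotheses guaranteeing that the non-virtual-dot and adjacency conditions needed for such a move to be legal are met.

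The main obstacle is careful bookkeeping rather than any conceptual subtlety. In particular, I must keep straight whether a diagrammatic swap corresponds to row or column swaps of the relevant partial permutation (and hence to left or right matrix multiplication), track how the rotation operator interacts with the transpositions $\tau_j$ via conjugation by the longest element $w_0$, and verify that the index shifts $k' = \bd(y_{i-1})+k$ and $l' = \bd(x_{i+1})+l$ correctly account for the offset introduced when viewing $v^i$ or $v^{i+1}$ as permutations on the full block rather than just on the support of the underlying partial permutation.
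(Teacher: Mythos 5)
Your proposal is correct and follows essentially the same route as the paper's proof: a direct case-by-case translation of each $K$-theoretic lacing diagram move (split by whether the middle column is a sink or an interior source) into the corresponding pair of operations on the entries of $\fc(\bw)$, with the ascent hypotheses of Lemma \ref{lem:KTheoreticPermutations} matching the non-crossing/adjacency conditions on the diagram side, and the converse obtained by applying $LD$. The paper's argument is just a terser version of the same bookkeeping, treating one representative case and declaring the others similar.
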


\begin{proof}
    Let $\bv=\fc(\bw)$ and $\bv'=\fc(\bw')$. Let us assume that $\bw$ and $\bw'$ are related by the first lacing diagram move in the figure above Lemma \ref{lem:KTheoreticPermutations} and that the leftmost laces in each diagram correspond to right pointing arrows in $Q$, while the rightmost laces in each diagram correspond to left pointing arrows in $Q$. Then, $\bv$ can be written in the form appearing in point $1(a)$ in Lemma \ref{lem:KTheoreticPermutations}, and $\bv'$ is the permutation appearing in $1(c)$. Thus, this $K$-theoretic lacing diagram move can be written permutation-theoretically via the move sending a permutation of the form $1(a)$ to the permutation appearing in $1(c)$. The other cases are similar. 
    
    By reversing the above argument, we easily see that if $\bv, \bv' \in S_{\bd}$ are related by one of the moves in \ref{lem:KTheoreticPermutations}, then $LD(\bv)$ and $LD(\bv')$ are related by the corresponding $K$-theoretic move.
\end{proof}

\begin{example}
   In this example we will see how to translate between lacing diagrams and elements of $S_{\bd}$ and see how moves in Lemma \ref{lem:KTheoreticPermutations} correspond to K-theoretic lacing diagram moves. Consider the following lacing diagram $\bw$, drawn on the left below, and its extended diagram, drawn on the right:
    
    \begin{center}
    \begin{tikzpicture}
\node (w11) at (0,2) {$\bullet$};
\node (w12) at (0,1) {$\bullet$};

\node (w21) at (1,2) {$\bullet$};
\node (w22) at (1,1) {$\bullet$};
\node (w23) at (1,0) {$\bullet$};

\node (w31) at (2,2) {$\bullet$};
\node (w32) at (2,1) {$\bullet$};
\node (w33) at (2,0) {$\bullet$};

\node (w41) at (3,2) {$\bullet$};
\node (w42) at (3,1) {$\bullet$};

\node (w51) at (4,1) {$\bullet$};

\node (y2) at (0,-.5) {$y_2$};
\node (x2) at (1,-.5) {$x_2$};
\node (y1) at (2,-.5) {$y_1$};
\node (x1) at (3,-.5) {$x_1$};
\node (y0) at (4,-.5) {$y_0$};

\node (w) at (-1, 1) {$\mathbf{w}=$};

\draw[->, thick] (w11) -- (w21);
\draw[->, thick] (w12) -- (w22);

\draw[->, thick] (w31) -- (w21);
\draw[->, thick] (w32) -- (w22);
\draw[->, thick] (w33) -- (w23);

\draw[->, thick] (w31) -- (w41);
\draw[->, thick] (w32) -- (w42);

\draw[->, thick] (w51) -- (w41);

\node (cw11) at (7,2) {$\bullet$};
\node (cw12) at (7,1) {$\bullet$};
\node[red] (cw13) at (7,0) {$\bullet$};

\node (cw21) at (8,2) {$\bullet$};
\node (cw22) at (8,1) {$\bullet$};
\node (cw23) at (8,0) {$\bullet$};

\node (cw31) at (9,2) {$\bullet$};
\node (cw32) at (9,1) {$\bullet$};
\node (cw33) at (9,0) {$\bullet$};

\node (cw41) at (10,2) {$\bullet$};
\node (cw42) at (10,1) {$\bullet$};
\node[red] (cw43) at (10,0) {$\bullet$};

\node (cw51) at (11,1) {$\bullet$};
\node[red] (cw50) at (11,2) {$\bullet$};

\node (cy2) at (7,-.5) {$y_2$};
\node (cx2) at (8,-.5) {$x_2$};
\node (cy1) at (9,-.5) {$y_1$};
\node (cx1) at (10,-.5) {$x_1$};
\node (cy0) at (11,-.5) {$y_0$};

\draw[->, thick] (cw11) -- (cw21);
\draw[->, thick] (cw12) -- (cw22);
\draw[->, thick, dashed, red] (cw13) -- (cw23);

\draw[->, thick] (cw31) -- (cw21);
\draw[->, thick] (cw32) -- (cw22);
\draw[->, thick] (cw33) -- (cw23);

\draw[->, thick] (cw31) -- (cw41);
\draw[->, thick] (cw32) -- (cw42);
\draw[->, thick, dashed, red] (cw33) -- (cw43);

\draw[->, thick] (cw51) -- (cw41);
\draw[->, thick, dashed, red] (cw50) -- (cw42);
\end{tikzpicture}
\end{center}

 Note that in the extended lacing diagram above, we are drawing the \textit{minimal} extension of $\bw$. However, Lemma \ref{lem:KTheoreticPermutations} is stated for sequences of permutations in $S_{\bd}$. That is, we are using the map from \eqref{eqn:permOperator} to identify lacing diagrams with elements of $S_\bd$. So, for example, we should interpret the arrows from $y_1$ to $x_2$ as defining the identity permutation on \textit{six} elements, not three, since $\bd(y_1)+\bd(x_2) = 3+3 = 6$. Let $\bv=\fc(\bw) \in S_{\bd}$.  

 Now let $i = 2$ and $k = 2$ in Lemma \ref{lem:KTheoreticPermutations} (1). Then $k' = \bd(y_1)+2 = 3+2 = 5$. Then, $v_i^{-1}(k)<v_i^{-1}(k+1)$ and $(\rot(v^i))^{-1}(k')<(\rot(v^i))^{-1}(k'+1)$. This is equivalent to noticing that the bottom two laces between the $y_2$ and $x_2$ column in the extended diagram don't cross and that the bottom two laces between the $x_2$ and $y_1$ columns don't cross. 

The element of $S_\bd$ associated to the lacing diagram in Figure \ref{fig:laces} then has the form
\[
\bv' = (v_2, \rot(\tau_5 \rot(v^2)),v_1,v^1)
\]
and the element of $S_\bd$ associated to the lacing diagram on the left in Example \ref{ex:KThDiagrams} has the form
\[
\bv'' = (\tau_2v_2, \rot(\tau_5 \rot(v^2)),v_1,v^1).
\]
Notice that $\bv'$ and $\bv''$ have the form of $1(b)$ and $1(c)$ in Lemma \ref{lem:KTheoreticPermutations}, while the associated extended lacing diagrams are related by a $K$-theoretic lace move.
 \end{example}

We are now in a position to prove our desired bijection between $X_\Omega$ and $KW(\Omega)$. Let $X_\Omega^{red} \subset X_{\Omega}$ denote the set of $\bv \in X_\Omega$ such that $\bv=\pi(P)$ for some \emph{reduced} pipe dream $P \in \RPipes(v_0, v(\Omega))$. The following is the bipartite analogue of \cite[Theorem 3]{BFR}.

\begin{proposition}\label{prop:KMSsurjection}
    Every $\bv \in X_\Omega$ can be obtained from an element of $X_\Omega^{red}$ by applying a sequence of the moves listed in Lemma \ref{lem:KTheoreticPermutations}.
\end{proposition}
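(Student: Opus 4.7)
The plan is to induct on a nonnegative excess statistic: for $\bv = (v_n, v^n, \dots, v_1, v^1) \in X_\Omega$, define
\[
e(\bv) := \left(\sum_{k=1}^n \ell(v_k) + \ell(v^k)\right) - \codim \Omega.
\]
First I would verify that $e(\bv) \geq 0$, with equality exactly when $\bv \in X_\Omega^{red}$. Given any $P \in \Pipes(v_0, v(\Omega))$ with $\pi(P) = \bv$, one may replace each mini pipe dream $P_k$ (respectively $\rot(P^k)$) by a reduced pipe dream with the same Demazure product $v_k$ (respectively $\rot((v^k)^{-1})$); this operation does not change $\pi(P)$ and produces a PN-reduced $P' \in \Pipes(v_0, v(\Omega))$ with $\pi(P') = \bv$ and $|P'_{\mathrm{snake}}| = \sum \ell(v_k) + \ell(v^k)$. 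Combining $|P'| = |P_*| + |P'_{\mathrm{snake}}| = \ell(v_*) + |P'_{\mathrm{snake}}|$ with the standard inequality $|P'| \geq \ell(\delta(P'))=\ell(v(\Omega))$ (with equality iff $P'$ is reduced), and the identity $\codim\Omega = \ell(v(\Omega)) - \ell(v_*)$, yields the claim.

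The next step is to track the effect of the moves in Lemma \ref{lem:KTheoreticPermutations} on $e$. Under Lemma \ref{lem:KTheoreticPermutations}(1), the passage $(a) \leftrightarrow (b)$ preserves $e$ (the length simply migrates between blocks $\beta_i$ and $\alpha_i$), while $(a) \leftrightarrow (c)$ and $(b) \leftrightarrow (c)$ each change $e$ by $\pm 1$; the analogous statement holds for Lemma \ref{lem:KTheoreticPermutations}(2). Since the moves are reversible, the proposition will follow by induction on $e(\bv)$, provided one shows that every $\bv \in X_\Omega$ with $e(\bv) > 0$ admits a sequence of moves strictly decreasing $e$. The base case $e(\bv) = 0$ places $\bv$ directly in $X_\Omega^{red}$.

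To produce such a sequence, I would fix a PN-reduced pipe dream $P$ representing $\bv$. Since $e(\bv) > 0$, $P$ is not a reduced pipe dream, so some pair of pipes $p, q$ crosses twice; as each mini pipe dream is reduced, the two crossings must lie in distinct snake-region blocks. By tracing $p$ and $q$ between their two crossings, and if necessary first performing some $e$-preserving $(a) \leftrightarrow (b)$ moves to shift one of the crossings through intermediate blocks, I expect to locate a pair of adjacent snake-region blocks (either $\beta_i$ and $\alpha_i$, or $\beta_i$ and $\alpha_{i+1}$) in which the local configuration fits pattern $(c)$ of Lemma \ref{lem:KTheoreticPermutations}. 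The relevant ascent conditions on $v_i, v^i$ (or $v_i, v^{i+1}$) can then be read off directly from the local pipe geometry. Applying the corresponding $(c) \to (a)$ or $(c) \to (b)$ move yields $\bv' \in X_\Omega$ with $e(\bv') = e(\bv) - 1$, closing the induction.

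The main obstacle I expect is the transport step just described: when the two crossings of $p$ and $q$ do not already lie in adjacent blocks, we must use $e$-preserving moves to consolidate them. This requires careful bookkeeping of pipe trajectories between non-adjacent snake-region blocks, together with a verification that the intermediate $(a) \leftrightarrow (b)$ moves remain available at each step (i.e., the ascent hypotheses of Lemma \ref{lem:KTheoreticPermutations} are satisfied along the way). This is the bipartite analogue of the central combinatorial step in the proof of \cite[Theorem 3]{BFR} for equioriented quivers, and the bulk of the argument consists of adapting that step to the present bipartite setting.
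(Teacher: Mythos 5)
Your proposal is correct in outline and takes essentially the same route as the paper: the paper's proof is simply the observation that the argument of \cite[Theorem 3]{BFR} carries over verbatim once one knows that elements of $X_\Omega$ satisfy the bipartite analogues of conditions (I) and (II) (Lemma \ref{lem:KTheoreticPermutations}), and your excess statistic $e(\bv)=\ell(\bv)-\codim\Omega$ together with the $0/\pm1$ effect of the moves is exactly the scaffolding implicit in that argument. The one step you leave open --- consolidating a double crossing of a PN-reduced representative into adjacent snake-region blocks via $e$-preserving moves --- is precisely the part the paper outsources to \cite{BFR}, so no new gap is introduced.
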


\begin{proof}
    The proof of \cite[Theorem 3]{BFR} carries over verbatim to the bipartite setting, using the fact that elements of $X_{\Omega}$ satisfy the bipartite analogues of conditions $(\text{I})$ and $(\text{II})$ from \cite[Page 318]{BFR}, as discussed in Remark \ref{rem:KMSconditions}.
\end{proof}

\begin{corollary}\label{cor:LDonto}
    If $\bw=LD(\bv)$ is the lacing diagram associated to some $\bv \in X_{\Omega}$, then $\bw \in KW(\Omega)$. 
\end{corollary}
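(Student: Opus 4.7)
The plan is to combine three ingredients already established in the paper: Proposition \ref{prop:KMSsurjection}, which lets me reduce from an arbitrary $\bv \in X_\Omega$ to some $\bv_0 \in X_\Omega^{red}$ via a chain of moves from Lemma \ref{lem:KTheoreticPermutations}; Proposition \ref{prop:wPminLacingDiagram}, which guarantees that the lacing diagram coming from a reduced pipe dream in $\RPipes(v_0, v(\Omega))$ is minimal; and Lemma \ref{lem:KThEquivariance}, which translates the permutation-level moves of Lemma \ref{lem:KTheoreticPermutations} into $K$-theoretic lacing diagram moves at the level of $LD$.

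Given $\bv \in X_\Omega$, I would first apply Proposition \ref{prop:KMSsurjection} to produce a chain $\bv_0, \bv_1, \dots, \bv_N = \bv$ in $X_\Omega$ with $\bv_0 \in X_\Omega^{red}$ and with consecutive terms related by one of the moves from Lemma \ref{lem:KTheoreticPermutations}. For the base case, I would choose a reduced pipe dream $P \in \RPipes(v_0, v(\Omega))$ with $\pi(P) = \bv_0$. Then Proposition \ref{prop:followpipes} gives $\fc(\fw(P)) = \pi(P) = \bv_0$, and Lemma \ref{lem:LDPerm} identifies $LD(\bv_0)$ with $\fw(P)$; Proposition \ref{prop:wPminLacingDiagram} then places this lacing diagram in $W(\Omega) \subseteq KW(\Omega)$. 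For the inductive step, I would invoke the converse direction of Lemma \ref{lem:KThEquivariance}, which asserts that whenever two elements of $S_\bd$ are related by a move from Lemma \ref{lem:KTheoreticPermutations}, their images under $LD$ are related by the corresponding $K$-theoretic lacing diagram move. Since $KW(\Omega)$ is by definition closed under such moves starting from $W(\Omega)$, the inductive hypothesis $LD(\bv_i) \in KW(\Omega)$ yields $LD(\bv_{i+1}) \in KW(\Omega)$. Taking $i = N$ gives $\bw = LD(\bv) \in KW(\Omega)$, completing the proof.

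The only subtlety I anticipate is making sure that the converse direction of Lemma \ref{lem:KThEquivariance} is genuinely available at each inductive step without presupposing that both endpoints of the move already lie in $KW(\Omega)$, which is precisely what I am trying to prove. Fortunately, the relevant direction in the proof of Lemma \ref{lem:KThEquivariance} derives the lace-move relation purely from the combinatorial form of the permutation-level move, without any such membership hypothesis, so the induction runs as intended and the argument goes through.
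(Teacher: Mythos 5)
Your proof is correct and follows essentially the same route as the paper's: reduce to $X_\Omega^{red}$ via Proposition \ref{prop:KMSsurjection}, identify $LD(\bv_0)$ with a minimal lacing diagram via Propositions \ref{prop:followpipes} and \ref{prop:wPminLacingDiagram}, and transport along the moves via Lemma \ref{lem:KThEquivariance}. Your observation that the relevant direction of Lemma \ref{lem:KThEquivariance} does not presuppose membership in $KW(\Omega)$ is a fair point of care that the paper's own proof leaves implicit.
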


\begin{proof}
    By Proposition \ref{prop:KMSsurjection}, $\bv$ can be obtained from some $\bv' \in X_{\Omega}^{red}$ by a sequence of moves as in Lemma \ref{lem:KTheoreticPermutations}. Let $P \in \RPipes(v_0, v(\Omega))$ be any reduced pipe dream such that $\pi(P)=\bv'$. By Proposition \ref{prop:followpipes} we have that $\pi(P)=\fc(\fw(P))$ and by Proposition \ref{prop:wPminLacingDiagram}, $\fw(P)$ is a minimal lacing diagram. Since the moves in Lemma \ref{lem:KTheoreticPermutations} correspond to $K$-theoretic lacing diagram moves by Lemma \ref{lem:KThEquivariance}, it follows that the lacing diagram $\bw=LD(\bv)$ associated to $\bv$ can be obtained by applying $K$-theoretic lacing diagram moves to the minimal lacing diagram $\fw(P)$. Therefore $\bw \in KW(\Omega)$. 
\end{proof}

\begin{proposition}\label{prop:lacesBijection}
The assignment $\bv\mapsto LD(\bv)$ induces a bijection $X_\Omega \overset{\sim}{\to}KW(\Omega)$.
\end{proposition}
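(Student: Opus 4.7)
The plan is to prove the bijection in three parts: well-definedness (the image lies in $KW(\Omega)$), injectivity, and surjectivity. Well-definedness is exactly Corollary \ref{cor:LDonto}, so the main work is in injectivity and surjectivity.

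For injectivity, I would use Lemma \ref{lem:LDPerm}, which tells us that $\fc$ is a left inverse to $LD$ on $\fc(\Laces(\bd))$. So it suffices to show that $X_\Omega \subseteq \fc(\Laces(\bd))$. But this is immediate: for any $\bv=\pi(P) \in X_\Omega$ with $P\in \Pipes(v_0, v(\Omega))$, Proposition \ref{prop:followpipes} gives $\bv = \pi(P) = \fc(\fw(P))$, so $\bv$ lies in the image of $\fc$. Therefore $LD$ restricted to $X_\Omega$ is injective with partial inverse $\fc$.

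For surjectivity, the natural candidate preimage of $\bw \in KW(\Omega)$ is $\fc(\bw)$. Since $LD(\fc(\bw)) = \bw$ by Lemma \ref{lem:LDPerm}, the entire task reduces to showing $\fc(\bw) \in X_\Omega$ whenever $\bw \in KW(\Omega)$. By definition of $KW(\Omega)$, there is a minimal lacing diagram $\bw_0 \in W(\Omega)$ and a sequence of $K$-theoretic lacing diagram moves transforming $\bw_0$ into $\bw$. By Corollary \ref{cor:OntoLaces}, there exists $P \in \RPipes(v_0, v(\Omega))$ with $\pi(P)=\fc(\bw_0)$, so $\fc(\bw_0) \in X_\Omega^{red} \subseteq X_\Omega$. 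I would then induct on the length of the sequence of $K$-theoretic moves: if $\bw'$ is obtained from $\bw''$ by a single $K$-theoretic move and $\fc(\bw'') \in X_\Omega$, then by Lemma \ref{lem:KThEquivariance} the permutations $\fc(\bw')$ and $\fc(\bw'')$ are related by one of the moves in Lemma \ref{lem:KTheoreticPermutations}. The latter lemma states that if any one of the three permutations (a), (b), (c) in such a move is in $X_\Omega$, then all three are; hence $\fc(\bw') \in X_\Omega$. Iterating yields $\fc(\bw) \in X_\Omega$, completing surjectivity.

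The main subtlety I would need to watch for is the bookkeeping around the hypotheses of Lemma \ref{lem:KTheoreticPermutations}, namely the conditions $v_i^{-1}(k)<v_i^{-1}(k+1)$ and $(\rot(v^i))^{-1}(k')<(\rot(v^i))^{-1}(k'+1)$ (and their analogues in part (2)). These conditions encode exactly the combinatorial requirement in the definition of a $K$-theoretic move---that the two middle dots in the relevant column are non-virtual and adjacent, and that the laces at the outer columns admit the swap---which is precisely the content of Lemma \ref{lem:KThEquivariance}. So while it requires care to line up, no new ideas beyond the cited results are needed. The injectivity argument and the identification of the inverse map as $\fc$ should be essentially immediate, and the surjectivity induction is the conceptual crux but follows cleanly from the equivariance Lemmas \ref{lem:KThEquivariance} and \ref{lem:KTheoreticPermutations}.
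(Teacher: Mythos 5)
Your proposal is correct and follows essentially the same route as the paper: well-definedness via Corollary \ref{cor:LDonto}, and surjectivity by lifting a minimal diagram to $X_\Omega$ via Corollary \ref{cor:OntoLaces} and then transporting $K$-theoretic moves through Lemmas \ref{lem:KThEquivariance} and \ref{lem:KTheoreticPermutations}. Your injectivity argument (exhibiting $\fc$ as an explicit inverse on $X_\Omega$ via Proposition \ref{prop:followpipes} and Lemma \ref{lem:LDPerm}) is in fact slightly more careful than the paper's ``obvious from construction,'' but it is the same underlying observation.
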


\begin{proof}
    Injectivity of this map is obvious from construction. By Corollary \ref{cor:LDonto} its image is a subset of $KW(\Omega)$. To see surjectivity, recall that $KW(\Omega)$ was defined to be the set of lacing diagrams obtained by applying $K$-theoretic lacing diagram moves to (the completions of) elements of $W(\Omega)$. Thus, let $\bw \in KW(\Omega)$ be a $K$-theoretic lacing diagram, and let $\bw' \in W(\Omega)$ be a minimal lacing diagram from which we can obtain $\bw$ using a sequence of $K$-theoretic moves. Since $\bw'$ is minimal, by Corollary \ref{cor:OntoLaces} there is a $P \in \RPipes(v_0, v(\Omega))$ with $\pi(P)=\fc(\bw')$. In particular, $\fc(\bw') \in X_\Omega$. By Lemma \ref{lem:KTheoreticPermutations} and Lemma \ref{lem:KThEquivariance}, $\fc(\bw) \in X_{\Omega}$ as well since $\bw$ can be obtained from $\bw'$ by a sequence of $K$-theoretic moves. Since $\bw=LD(\fc(\bw))$, we have therefore proven that every $K$-theoretic lacing diagram can be realized as $LD(\bv)$ for some $\bv \in X_\Omega$. 
\end{proof}

\subsection{Proof of the bipartite type $A$ $K$-theoretic quiver component formula}
We are now ready to prove our main theorem. The proof is very similar to the proof of Theorem \ref{thm:3.9}.

\begin{theorem} \label{thm:4.16}
For any bipartite type A quiver locus $\Omega$, its K-polynomial can be written as
\begin{align*}
KQ_{\Omega}(\mathbf{t}/\mathbf{s})=\sum_{\mathbf{w} \in KW(\Omega)} (-1)^{|\mathbf{w}|-\codim (\Omega)} \mathfrak{G}_\mathbf{w}(\mathbf{t};\mathbf{s})\,.
\end{align*}
\end{theorem}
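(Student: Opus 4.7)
The plan is to follow the strategy of the proof of Theorem~\ref{thm:3.9}, but starting from the $K$-theoretic bipartite pipe formula of Theorem~\ref{thm:pipeformulas}:
\[
K\mathcal{Q}_\Omega(\mathbf{t}/\mathbf{s}) \;=\; \sum_{P \in \Pipes(v_0, v(\Omega))} (-1)^{|P \setminus P_*| - \codim \Omega} \Bigl( 1 - \frac{\mathbf{t}}{\mathbf{s}} \Bigr)^{P \setminus P_*}.
\]
The first step is to invoke the bijection of Proposition~\ref{prop:pipeDreamBijection} to re-index this sum by $\bv \in X_\Omega$ together with a tuple of mini pipe dreams: $\sum_{\bv \in X_\Omega}\sum_{(P_n, P^n, \dots, P_1, P^1) \in \prod_i (\Pipes(v_i) \times \Pipes(v^i))}$. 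Since $P_*$ is common to every $P$ in the image of $PN$ (Lemma~\ref{lemma:3.2}) and $|P \setminus P_*| = \sum_i (|P_i| + |P^i|)$, both the sign and the monomial $(1-\mathbf{t}/\mathbf{s})^{P \setminus P_*}$ factor across the snake-region blocks, so the inner sum decouples into a product of independent sums over the individual factors $\Pipes(v_i)$ and $\Pipes(v^i)$.

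The second step is to evaluate each of these factored sums using the pipe-dream formula of Proposition~\ref{prop:pipedreamGrothendieck} together with Lemma~\ref{lem:completedPipes}, which lets us identify $\fG_{v_i} = \fG_{w_i}$ and $\fG_{v^i} = \fG_{\rot(w^i)}$, where $w_i$ and $\rot(w^i)$ are the partial permutations defining $\bw = LD(\bv)$. For the $\beta_i$-block we obtain directly
\[
\sum_{P_i \in \Pipes(v_i)} (-1)^{|P_i|}\Bigl(1 - \tfrac{\mathbf{t}^i}{\mathbf{s}^i}\Bigr)^{P_i} = (-1)^{\ell(v_i)}\,\fG_{w_i}(\mathbf{t}^i;\mathbf{s}^i).
\]
For the $\alpha_i$-block, the weight assigned to the rotated pipe dream $\rot(P^i)$ sitting inside the snake region of $P$ is $(1 - \mathbf{t}^{i-1}/\mathbf{s}^i)^{\rot(P^i)}$; rewriting this as $(1 - \tilde{\mathbf{t}}^{i-1}/\tilde{\mathbf{s}}^i)^{P^i}$ via the identity $(a-b)^Q = (\tilde{a}-\tilde{b})^{\rot(Q)}$ (valid for any pipe dream $Q$ with alphabets $a,b$), one obtains analogously $(-1)^{\ell(v^i)}\,\fG_{\rot(w^i)}(\tilde{\mathbf{t}}^{i-1};\tilde{\mathbf{s}}^i)$.

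After multiplying these factors and collecting signs, the accumulated sign is $(-1)^{\sum_i (\ell(v_i)+\ell(v^i))-\codim\Omega} = (-1)^{\ell(\bv)-\codim \Omega} = (-1)^{|\bw|-\codim \Omega}$, using that the number of crossings in the extended diagram of $\bw = LD(\bv)$ is precisely $\ell(\fc(\bw)) = \ell(\bv)$. The final step is to re-index the sum over $\bv \in X_\Omega$ as a sum over $K$-theoretic lacing diagrams via the bijection of Proposition~\ref{prop:lacesBijection}, yielding $\sum_{\bw \in KW(\Omega)} (-1)^{|\bw| - \codim \Omega}\fG_\bw(\mathbf{t};\mathbf{s})$, as required. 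Since the heavy lifting (the pipe-network bijection and the $X_\Omega \leftrightarrow KW(\Omega)$ correspondence) is already done, the main obstacle is purely sign bookkeeping: each application of the Grothendieck pipe-dream expansion introduces an $(-1)^{\ell(v_i)}$ or $(-1)^{\ell(v^i)}$ correction, and one must verify that these combine with the global $(-1)^{|P\setminus P_*|}$ from the pipe formula to produce exactly $(-1)^{|\bw|}$.
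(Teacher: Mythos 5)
Your proposal is correct and follows essentially the same route as the paper: start from the $K$-theoretic pipe formula, re-index via Proposition \ref{prop:pipeDreamBijection} and Proposition \ref{prop:lacesBijection}, factor block-by-block into Grothendieck polynomials via Proposition \ref{prop:pipedreamGrothendieck} and Lemma \ref{lem:completedPipes}, and track the signs through $\sum_i(\ell(v_i)+\ell(v^i)) = \ell(\bv) = |\bw|$. The only cosmetic difference is that you decouple the inner sum into a product all at once using the product structure in Proposition \ref{prop:pipeDreamBijection}, whereas the paper peels off one factor at a time via equivalence classes of pipe networks; the sign bookkeeping you describe matches the paper's.
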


\begin{proof}
The $K$-theoretic pipe formula (Theorem \ref{thm:pipeformulas}) states that 
\begin{align*}
KQ_\Omega(\mathbf{t}/\mathbf{s})=\sum_{P \in \Pipes(v_0, v(\Omega))} (-1)^{|P\setminus P_\ast|-\codim(\Omega)} \left(1- \mathbf{t}/\mathbf{s}\right) ^{P\setminus P_*}.
\end{align*}
Noting that 
\begin{align*}
|P\setminus P_\ast| -\codim(\Omega) &= |P\setminus P_\ast| -( \ell(v(\Omega))- \ell(v_\ast)) =|P| - \ell(v(\Omega)), 
\end{align*}
we can rewrite this equation as 
\begin{align}\label{eqn:kpoly}
    KQ_\Omega(\mathbf{t}/\mathbf{s})=\sum_{P \in \Pipes(v_0, v(\Omega))} (-1)^{|P|-\ell(v(\Omega))} \left(1- \mathbf{t}/\mathbf{s}\right) ^{P\setminus P_*}.
\end{align}
By Proposition \ref{prop:pipeDreamBijection} and Corollary \ref{lem:partialperm}, Equation \eqref{eqn:kpoly} can be rewritten as
\begin{align*}
KQ_\Omega(\mathbf{t}/\mathbf{s})=\sum_{\mathbf{v} \in X_\Omega} \sum_{P \in \PipeNet(LD(\mathbf{v}))} (-1)^{|P|-\ell(v(\Omega))} (1-\mathbf{t}/\mathbf{s})^{P\setminus P_*}.
\end{align*}
Now, by Proposition \ref{prop:lacesBijection}, $X_\Omega$ is naturally in bijection with $KW(\Omega)$, and we may use Lemma \ref{lem:LDPerm} to write 
\begin{align*}
KQ_\Omega(\mathbf{t}/\mathbf{s})= \sum_{\mathbf{w} \in KW(\Omega)}  \sum_{P \in \PipeNet(\mathbf{w})} (-1)^{|P|-\ell(v(\Omega))} (1-\mathbf{t}/\mathbf{s})^{P\setminus P_*}.
\end{align*} 
It is therefore sufficient to prove the following equality for any fixed $\mathbf{w} \in KW(\Omega)$: 
\begin{align}\label{eqn:KTfactorization}
(-1)^{|\mathbf{w}| - \codim(\Omega)} \mathfrak{G}_\mathbf{w}(\mathbf{t};\mathbf{s})= \sum_{P \in \text{PipeNet}(\mathbf{w})} (-1)^{|P|-\ell(v(\Omega))}(1-\mathbf{t}/\mathbf{s})^{P \setminus P_*}.
\end{align}
To see this, we notice that
\begin{align*}
&\sum_{P \in \text{PipeNet}(\mathbf{w})} (-1)^{|P|-\ell(v(\Omega))}(1 - \mathbf{t}/\mathbf{s})^{P\setminus P_*} \\
=&(-1)^{- \ell(v(\Omega))+|P_\ast|}\sum_{P \in \text{PipeNet}(\mathbf{w})} \prod_{i=1}^n (-1)^{|P_i|} (1-\mathbf{t}^i/\mathbf{s}^i)^{P_i} \prod_{i=1}^n (-1)^{|P^i|} (1-\tilde{\mathbf{t}}^{i-1}/\tilde{\mathbf{s}}^i)^{\rot(P^i)} \\
\end{align*}
Using the fact that $-\ell(v(\Omega))+|P_\ast| = -\codim(\Omega)$ and factoring this polynomial as in the proof of Theorem \ref{thm:3.9}, we obtain
\begin{align*}
&(-1)^{-\codim(\Omega)} \left( \sum_{P_n \in \Pipes (w_n)} (-1)^{|P_n|} (1-\mathbf{t}^n/\mathbf{s}^n)^{P_n} \right) \\
&\times \sum_{[P] \in \text{PipeNet}(\mathbf{w}; \beta_n)} \prod_{i=1}^{n-1} (-1)^{|P_i|} (1-\mathbf{t}^i/\mathbf{s}^i)^{P_i} \prod_{i=1}^n (-1)^{|P^i|} (1-\tilde{\mathbf{t}}^{i-1}/\tilde{\mathbf{s}}^i)^{\rot(P^i)},
\end{align*}
where the sum over equivalence classes of pipe dreams is interpreted as in Remark \ref{rem:equivSums}. The term in parentheses can be rewritten as
\begin{align*}
      \sum_{P_n \in \Pipes (w_n)} (-1)^{|P_n|} (1-\mathbf{t}^n/\mathbf{s}^n)^{P_n} &=   \sum_{P_n \in \Pipes (w_n)} (-1)^{(|P_n| - |w_n|)+|w_n|} (1-\mathbf{t}^n/\mathbf{s}^n)^{P_n} \\
      &=  (-1)^{|w_n|}\sum_{P_n \in \Pipes (w_n)} (-1)^{|P_n|-\ell(c(w_n))} (1-\mathbf{t}^n/\mathbf{s}^n)^{P_n}.
\end{align*}
Noticing that
\begin{align*}
    \sum_{P_n \in \Pipes(w_n)}(-1)^{P_n - \ell(c(w_n)}(1- \bt^n/\bs^n)^{P_n} =\fG_{w_n}(\bt^n;\bs^n),
\end{align*}
we see that the right-hand-side of Equation \eqref{eqn:KTfactorization} can ultimately be simplified to 
\begin{align*}
&(-1)^{-\codim(\Omega)+|w_n|} \mathfrak{G}_{w_n}(\mathbf{t}^n;\mathbf{s}^n)& \\
\times &\sum_{[P] \in \PipeNet(\mathbf{w};\beta_n)} \prod_{i=1}^{n-1} (-1)^{|P_i|} (1-\mathbf{t}^i/\mathbf{s}^i)^{P_i} \prod_{i=1}^n (-1)^{|P^i|} (1-\tilde{\mathbf{t}}^{i-1}/\tilde{\mathbf{s}}^i)^{\rot(P^i)}. \\
\end{align*}
If we continue in this fashion, then after having factored each Grothendieck polynomial in the same way we obtain
\begin{align*}
(-1)^{-\codim(\Omega)+\sum_{i=1}^n|w_i|+|w^i|} \prod_{i=1}^n \mathfrak{G}_{w_i}(\mathbf{t}^i;\mathbf{s}^i) \prod_{i=1}^n \mathfrak{G}_{w^i}(\tilde{\mathbf t}^{i-1};\tilde{\mathbf s}^i) =(-1)^{|\mathbf{w}|-\codim(\Omega)} \mathfrak{G}_{\mathbf{w}}(\mathbf{t};\mathbf{s})\,,
\end{align*}
as desired.
\end{proof}

\bibliographystyle{amsalpha}
\bibliography{typeA}

\providecommand{\bysame}{\leavevmode\hbox to3em{\hrulefill}\thinspace}
\providecommand{\MR}{\relax\ifhmode\unskip\space\fi MR }
% \MRhref is called by the amsart/book/proc definition of \MR.
\providecommand{\MRhref}[2]{%
  \href{http://www.ams.org/mathscinet-getitem?mr=#1}{#2}
}
\providecommand{\href}[2]{#2}
\begin{thebibliography}{KKR19}

\bibitem[AJS94]{AJS1994}
H.~H. Andersen, J.~C. Jantzen, and W.~Soergel, \emph{Representations of quantum
  groups at a {$p$}th root of unity and of semisimple groups in characteristic
  {$p$}: independence of {$p$}}, Ast\'erisque (1994), no.~220, 321.
  \MR{1272539}

\bibitem[BFR05]{BFR}
Anders~S. Buch, L{{\'a}}szl{{\'o}}~M. Feh{{\'e}}r, and Rich{{\'a}}rd
  Rim{{\'a}}nyi, \emph{Positivity of quiver coefficients through {T}hom
  polynomials}, Adv. Math. \textbf{197} (2005), no.~1, 306--320. \MR{MR2166185
  (2006j:14069)}

\bibitem[Bil99]{Billey1999}
Sara~C. Billey, \emph{Kostant polynomials and the cohomology ring for {$G/B$}},
  Duke Math. J. \textbf{96} (1999), no.~1, 205--224. \MR{1663931}

\bibitem[BR07]{BuchRim}
Anders~Skovsted Buch and Rich\'{a}rd Rim\'{a}nyi, \emph{A formula for
  non-equioriented quiver orbits of type {$A$}}, J. Algebraic Geom. \textbf{16}
  (2007), no.~3, 531--546. \MR{2306279}

\bibitem[Buc05a]{Buch}
Anders~Skovsted Buch, \emph{Alternating signs of quiver coefficients}, J. Amer.
  Math. Soc. \textbf{18} (2005), no.~1, 217--237. \MR{2114821}

\bibitem[Buc05b]{MR2114821}
\bysame, \emph{Alternating signs of quiver coefficients}, J. Amer. Math. Soc.
  \textbf{18} (2005), no.~1, 217--237 (electronic). \MR{2114821 (2006d:14052)}

\bibitem[FK96]{FK1}
Sergey Fomin and Anatol~N. Kirillov, \emph{The {Y}ang-{B}axter equation,
  symmetric functions, and {S}chubert polynomials}, Proceedings of the 5th
  {C}onference on {F}ormal {P}ower {S}eries and {A}lgebraic {C}ombinatorics
  ({F}lorence, 1993), vol. 153, 1996, pp.~123--143. \MR{1394950}

\bibitem[FL94]{FultonLascoux}
William Fulton and Alain Lascoux, \emph{A {P}ieri formula in the {G}rothendieck
  ring of a flag bundle}, Duke Math. J. \textbf{76} (1994), no.~3, 711--729.
  \MR{1309327}

\bibitem[GK15]{GK2015}
William Graham and Victor Kreiman, \emph{Excited {Y}oung diagrams, equivariant
  {$K$}-theory, and {S}chubert varieties}, Trans. Amer. Math. Soc. \textbf{367}
  (2015), no.~9, 6597--6645. \MR{3356949}

\bibitem[Kin18]{KinserICRA}
Ryan Kinser, \emph{K-polynomials of type a quiver orbit closures and lacing
  diagrams}, Representations of Algebras, Contemp. Math., vol. 705, Amer. Math.
  Soc., Providence, RI, 2018, pp.~99--114.

\bibitem[KKR19]{KKR}
Ryan Kinser, Allen Knutson, and Jenna Rajchgot, \emph{Three combinatorial
  formulas for type {$A$} quiver polynomials and {$K$}-polynomials}, Duke Math.
  J. \textbf{168} (2019), no.~4, 505--551. \MR{3916063}

\bibitem[KMS06]{KMS}
Allen Knutson, Ezra Miller, and Mark Shimozono, \emph{Four positive formulae
  for type {$A$} quiver polynomials}, Invent. Math. \textbf{166} (2006), no.~2,
  229--325. \MR{2249801 (2007k:14098)}

\bibitem[KR15]{KR}
Ryan Kinser and Jenna Rajchgot, \emph{Type {$A$} quiver loci and {S}chubert
  varieties}, Journal of Commutative Algebra \textbf{7} (2015), no.~2,
  265--301.

\bibitem[Mil05a]{Miller}
Ezra Miller, \emph{Alternating formulas for {$K$}-theoretic quiver
  polynomials}, Duke Math. J. \textbf{128} (2005), no.~1, 1--17. \MR{2137947}

\bibitem[Mil05b]{MR2137947}
\bysame, \emph{Alternating formulas for {$K$}-theoretic quiver polynomials},
  Duke Math. J. \textbf{128} (2005), no.~1, 1--17. \MR{2137947 (2006e:05181)}

\bibitem[MS05]{MillerSturmfels2005}
Ezra Miller and Bernd Sturmfels, \emph{Combinatorial commutative algebra},
  Graduate Texts in Mathematics, vol. 227, Springer-Verlag, New York, 2005.
  \MR{2110098}

\bibitem[Wil06]{Willems2006}
Matthieu Willems, \emph{{$K$}-th\'eorie \'equivariante des tours de {B}ott.
  {A}pplication \`a{} la structure multiplicative de la {$K$}-th\'eorie
  \'equivariante des vari\'et\'es de drapeaux}, Duke Math. J. \textbf{132}
  (2006), no.~2, 271--309. \MR{2219259}

\bibitem[WY12]{WooYong}
Alexander Woo and Alexander Yong, \emph{A {G}r\"{o}bner basis for
  {K}azhdan-{L}usztig ideals}, Amer. J. Math. \textbf{134} (2012), no.~4,
  1089--1137. \MR{2956258}

\bibitem[Yon05]{yongComponent}
Alexander Yong, \emph{On combinatorics of quiver component formulas}, J.
  Algebraic Combin. \textbf{21} (2005), no.~3, 351--371. \MR{2142844
  (2006d:14056)}

\end{thebibliography}

\end{document}